\title{Coherence without unique normal forms}
\author{Jonathan A. Cohen}
\address{Department of Computing\\
  Macquarie University\\
  Sydney NSW 2109\\
  Australia
}
\email{jonathan.cohen@anu.edu.au}
\DeclareMathAlphabet{\mathpzc}{OT1}{pzc}{m}{it}
\newcommand{\C}{\ensuremath{\mathscr{C}}}
\newcommand{\F}{\ensuremath{\mathcal{F}}}
\newcommand{\T}{\ensuremath{\mathcal{T}}}
\newcommand{\tuple}{\ensuremath{\F,\theta_{\F}, \T,
    \theta_{\T}}}  
\newcommand{\Fr}{\ensuremath{\mathbb{F}}}
\newcommand{\Term}{\ensuremath{\mathrm{Term}}}
\newcommand{\N}{\ensuremath{\mathbb{N}}}
\newcommand{\Ob}{\ensuremath{\mathrm{Ob}}}
\newcommand{\Mor}{\ensuremath{\mathrm{Mor}}}
\newcommand{\of}{\ensuremath{\cdot}}
\newcommand{\M}{\ensuremath{\mathscr{M}}}
\newcommand{\tensor}{\ensuremath{\otimes}}
\renewcommand{\S}{\ensuremath{\mathpzc{S}}}
\newcommand{\Shape}{\ensuremath{\mathrm{Shape}}}
\newcommand{\var}{\ensuremath{\mathrm{Var}}}
\newcommand{\Red}{\ensuremath{\mathrm{Red}}}
\newcommand{\Sub}{\ensuremath{\mathrm{Sub}}}
\newcommand{\R}{\ensuremath{\mathbb{R}}}
\newcommand{\Out}{\ensuremath{\mathrm{Out}}}
\newtheorem{theorem}{Theorem}[section]
\newtheorem{lemma}[theorem]{Lemma}
\newtheorem{definition}[theorem]{Definition}
\newtheorem{proposition}[theorem]{Proposition}
\newtheorem{corollary}[theorem]{Corollary}
\newtheorem{example}[theorem]{Example}
\newcommand{\arrow}[2]{\ensuremath{
    \def\objectstyle{\scriptstyle}
    \def\labelstyle{\scriptstyle}
    \xymatrix@1{
      {#1} ~\to~  {#2}
    }
  }
}
\begin{document}

\begin{abstract}
  Coherence theorems for covariant structures carried by a category
  have traditionally relied on the underlying term rewriting system of
  the structure being terminating and confluent. While this holds in a
  variety of cases, it is not a feature that is inherent to the
  coherence problem itself. This is demonstrated by the theory of
  iterated monoidal categories, which model iterated loop spaces and
  have a coherence theorem but fail to be confluent. We develop a
  framework for expressing coherence problems in terms of term
  rewriting systems equipped with a two dimensional congruence. Within
  this framework we provide general solutions to two related coherence
  theorems: Determining 
  whether there is a decision procedure for the commutativity of
  diagrams in the resulting structure and determining sufficient
  conditions ensuring that ``all diagrams commute''. The resulting
  coherence theorems rely on neither the termination nor the
  confluence of the underlying rewriting system. We apply the theory
  to iterated monoidal categories and obtain a new, conceptual proof
  of their coherence theorem. 
\end{abstract}
\maketitle

\section{Introduction}

Coherence theorems are a mechanism for ensuring that an extra
structure carried by a category is not too wildly behaved. This
typically takes the form of an assurance that a certain large class of
diagrams always commutes. In the most favourable situation, one proves
that any diagram built solely out of the structuring functors and
natural transformations is guaranteed to commute. This was the case in
the earliest coherence results of Mac Lane for monoidal and symmetric
monoidal categories \cite{MacLane_natural}. 

A close examination of Mac Lane's proof reveals a connection between
covariant structures carried by categories and term rewriting
theory. In particular, the proof mainly revolves around elucidating
the fact that a free monoidal structure on a discrete category,
considered as a term rewriting system, is terminating and
confluent. ``Termination'' means that there are no infinite chains of
non-identity morphisms, while ``confluence'' is the property that
every span may be completed into a square (see Figure
\ref{fig:confluence}).

\begin{figure}[ht]
    \[
    \def\objectstyle{\scriptstyle}
    \def\labelstyle{\scriptstyle}
    \vcenter{
      \xymatrix{
        {\cdot} \ar[r] \ar[d] &  {\cdot} \ar@{-->}[d] \\
        {\cdot} \ar@{-->}[r] & {\cdot}
      }
    }
    \]
    \caption{Confluence}\label{fig:confluence}
\end{figure}
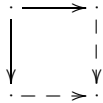 

Confluence and termination together conspire to ensure that a term
rewriting system has unique normal forms. That is, not only is every
chain of morphisms finite, but every sequence of morphisms beginning
from an object ends at a point that depends only on the starting
object. This seemingly strong property is present in a very large array
of structures and has, for instance, been exploited by Laplaza to
derive coherence theorems for directed associativity
\cite{laplaza:associative} and for distributive categories
\cite{laplaza:distributive}. 

Unfortunately, it is simply not the case that every coherent covariant
structure has unique normal forms. For instance, the structure
consisting of a unary functor $F$ and the single natural
transformation $F(X) \to F(F(X))$ is non-terminating, but easily seen
to be coherent. A more spectacular counterexample to the hope that
coherent structures have unique normal forms is provided by the theory
of iterated monoidal categories \cite{iterated}, which arise as a 
categorical model of iterated loop spaces and fail to be confluent. 

We are now faced with the problem of determining sufficient conditions
for coherence in terms of the underlying rewriting system of a
structure that do not rely on either termination or
confluence. This very quickly leads one to consider two further
coherence questions: If there are diagrams that do not commute, then
is there at least a decision procedure that determines whether a given
diagram commutes? Is it at least true that for any finite collection
of functors and natural transformations, there is always a finite set
of diagrams whose commutativity implies the commutativity of all
diagrams built from this structure? 

This paper sets out to solve the various coherence questions by
vigourously pursuing the idea that two morphisms with the same
source and target in a free covariant structure on a discrete category
commute precisely when they admit a planar subdivision such that each
face is an instance of naturality, or of functoriality or of one of
the axioms. The guiding intuition behind this approach is that a span
that cannot be completed into a square can never appear in such a 
subdivision. 

We begin in Section \ref{sect:structures} by developing a framework
for viewing a two dimensional structure on a category as a term
rewriting system modulo a two dimensional congruence. In Section
\ref{sect:lambek}, we resolve the problem of determining sufficient
conditions for the existence of a decision procedure for the
commutativity of diagrams. We call this  problem the ``Lambek
coherence problem'', since it is inspired by Lambek's paper on closed
categories and deductive systems \cite{Lambek:coh1}. In Section
\ref{sect:maclane}, we determine sufficient conditions for all
diagrams to  commute, a problem that we call the ``Mac Lane coherence
problem''. As an immediate application, we construct an example of a
structure that has no  finite basis for Mac Lane coherence but is
otherwise well behaved. 
Finally, in Section \ref{sect:imc}, we apply the theory to iterated
monoidal categories and obtain a new and conceptual proof of their
coherence theorem.  

\section{$2$-Structures}\label{sect:structures}

The purpose of this section is to describe a two-dimensional covariant
structure on a category as a certain type of term rewriting system. At
the onset, we are presented with certain basic functors and natural 
transformations, together with an equational theory on the absolutely
free term algebra generated by the functors, as well as an
equational theory on the absolutely free reduction system generated by
the natural transformations. The idea of viewing such a system as a
term rewriting system can be found, for instance, in Meseguer's
Rewriting Logic \cite{Meseguer_rl}. An important point to note is that
Rewriting Logic does not allow any additional equations on reductions,
beyond those required to ensure naturality and functoriality. In other
words, it does not provide a facility for specifying coherence
conditions. We begin by describing the first layer of structure.

\begin{definition}
  Given a graded set of function symbols $\F := \sum_n \F_n$ and a set
  $X$, the absolutely free term algebra generated by $\F$ on $X$ is
  denoted by $\Term_X(\F)$. 
\end{definition}

The next layer of structure adds an equational theory to
$\Term_X(\F)$:

\begin{definition}
  Given a graded set of function symbols $\F$, a set $X$ and a set of
  equations $\theta_\F$ on $\Term_X(\F)$, we denote by
  $\Term_X(\F,\theta_\F)$ the quotient of $\Term_X(\F)$ by the
  smallest congruence generated by $\theta_\T$. We write $[t]$ for the
  image of a term $t$ under the homomorphism $\Term_X(\F) \to
  \Term_X(\F,\theta_\F)$. 
\end{definition}

The next layer of structure adds some reduction rules between
congruence classes of $\Term_X(\F,\theta_\F)$. 

\begin{definition}\label{defn:trt}
A \emph{labelled term rewriting theory} is a tuple $\langle X, \F,
\theta_\F, \mathcal{L}, \T \rangle$, where $X$ is a countably infinite
set of variables, $\F$ is a graded set of function symbols,
$\theta_\F$ is a system of $\Term_X(\F)$-equations, $\mathcal{L}$ is a
set of labels and $\T$ is a subset of $\mathcal{L} \times
(\Term_X(\F,\theta_\F))^2$ satisfying the following consistency 
conditions: 
\begin{center}
  If $(\alpha,s_1,t_1)$ and $(\alpha,s_2,t_2)$ are in $\T$ then
    $s_1 = s_2$ and $t_1 = t_2$.
\end{center}
If $(\alpha,s,t) \in \T$, we write $\alpha:\ell \to r$. A member
of $\T$ is called a \emph{labelled reduction rule}. 
\end{definition}

Given a labelled term rewriting theory $\langle X, \F, \theta_\F,
\mathcal{L}, \T \rangle$, the particular choice of $X$ and
$\mathcal{L}$ is irrelevant. What is important is simply that there
are sufficient variables and labels. Accordingly, we shall henceforth
suppress explicit mention of the variables and labels and write
$\langle \F, \theta_\F, \T \rangle$ for a labelled term rewriting
theory. A labelled term rewriting theory embodies the basic reductions
that are to generate all others. The next step is to obtain an
analogue of the absolutely free term algebra for this higher
dimensional layer of structure. This is achieved by the following
definition, we there notation $\overline{x}^n$ is an abbreviation for
$x_1,\dots,x_n$ and $F(\overline{s}^n/\overline{x}^n)$ denotes the
uniform substitution of the free variables $\overline{x}^n$ by
$\overline{s}^n$.

\begin{definition}
  Given a labelled term rewriting theory $\mathcal{R} := \langle \F,
  \theta_\F,\T\rangle$ and a category $\C$, the set of
  \emph{reductions generated by $\mathcal{R}$} is denoted
  $\Term_\C(\F,\theta_\F,\T)$ and is generated inductively by the
  following rules:  
      
  \medskip

  \begin{center}
    \begin{tabular}{cl}
      \AxiomC{}
      \UnaryInfC{$[f]:[s] \to [t]$}
      \DisplayProof
      
      &  {\rm (Inheritance)}

      \bigskip\\
      
      \AxiomC{$\varphi_1:[s_1] \to [t_1]~ \dots~ \varphi_n:[s_n] \to [t_n]$} 
      \UnaryInfC{$F(\varphi_1,\dots,\varphi_n):[F(s_1,\dots,s_n)] \to
        [F(t_1,\dots,t_n)]$} 
      \DisplayProof
      
      & {\rm (Structure)}

      \bigskip\\
      
      \AxiomC{$\alpha:[F(\overline{x}^n)] \to [G(\overline{x}^n)]$}
      \AxiomC{$(\varphi_i:[s_i] \to [t_i])_{i=1}^n$}
      \BinaryInfC{$\tau(\varphi_1,\dots,\varphi_n):[F(\overline{s}^n/
        \overline{x}^n)] \to [G(\overline{t}^n/\overline{x}^n)]$} 
      \DisplayProof
      
      & {\rm (Replacement)}

      \bigskip\\
      
      \AxiomC{$\varphi: [s] \to [u]$}
      \AxiomC{$\psi: [u] \to [t]$}
      \BinaryInfC{$(\varphi\of\psi):[s] \to [t]$}
      \DisplayProof
      
      &  {\rm (Transitivity)}

    \end{tabular}
  \end{center}
  In the {\rm (Inheritance)} rule, $f:s\to t$ is in $\Mor(\C)$. In the
  {\rm (Structure)} rule, $F$ is a function symbol of rank $n$. In the
  {\rm (Replacement)} rule. 
\end{definition}

\begin{example}\label{example:associative}
  
  Let $\C$ be the discrete category generated by the set
  $\{A,B,C,D\}$. Consider the term rewriting theory with a single
  binary function symbol $\tensor$, an empty equational theory on
  terms and the single reduction rule:
  \[ \alpha(x,y,z) : x\tensor(y\tensor z) \to (x\tensor y)\tensor z\]
  
  A derivation of $\arrow{A \tensor (B \tensor (C \tensor
    D))}{(A\tensor B)\tensor(C\tensor D)}$   
  in this system is given by:
    \begin{prooftree}
      \AxiomC{}
      \UnaryInfC{$\arrow{1_A~:~A}{A}$}
      \AxiomC{}
      \UnaryInfC{$\arrow{1_B~:~B}{B}$}
      \AxiomC{}
      \UnaryInfC{$\arrow{1_C~:~C}{C}$}
      \AxiomC{}
      \UnaryInfC{$\arrow{1_D~:~D}{D}$}
      \TrinaryInfC{$\arrow{\alpha(1_B,1_C,1_D)~:~B\tensor(C\tensor D)}
        {(B\tensor C)\tensor D}$}
      \BinaryInfC{
        $\arrow{1_A\tensor\alpha(1_B,1_C,1_D)~:~A\tensor(B\tensor(C\tensor D))} 
        {(A\tensor B) \tensor (C\tensor D)}$
      }
    \end{prooftree}
\end{example}

The consistency condition in Definition \ref{defn:trt} easily yields
the following lemma, which asserts that we may equate reductions with
their labels. 

\begin{lemma}
  Let $\C$ be a category and $\langle \F, \theta_\F, \T\rangle$ be a labelled
  term rewriting theory. Then:
  \begin{enumerate}
    \item  If $\alpha:s \to t$ and $\alpha:s' \to t'$ are in
    $\Term_\C(\F,\theta_\F,\T)$, then $s = s'$ and $t = t'$.
    \item For $t \in \Term_{\Ob(C)}(\F,\theta_\F)$, there is a unique
      identity reduction $1_t:[t] \to [t]$ in
      $\Term_\C(\F,\theta_\F,\T)$ given inductively by:
      \begin{equation*}
        1_t = 
        \begin{cases}
          [1_t] & \text{if $t \in \Ob(\C)$,}\\
          F(1_{t_1},\dots,1_{t_n}) & \text{if $t = F(t_1,\dots,t_n)$}
        \end{cases}
      \end{equation*}
    \end{enumerate}
\end{lemma}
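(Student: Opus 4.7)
The plan is to prove both parts by structural induction, with part (1) establishing the ``labels determine source and target'' principle that then drives the uniqueness claim in part (2).

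For part (1) I would induct on the structure of the label $\alpha$, examining each of the four formation rules. Each inference rule produces a label with a distinctive outer constructor: Inheritance yields a label of the form $[f]$ for some $f \in \Mor(\C)$; Structure yields $F(\varphi_1,\dots,\varphi_n)$ with $F$ a function symbol; Replacement yields $\tau(\varphi_1,\dots,\varphi_n)$ with $\tau \in \mathcal{L}$; and Transitivity yields $\varphi \of \psi$. Assuming these outer shapes are mutually distinguishable, the last rule applied is determined by $\alpha$. The Inheritance case then uses uniqueness of source and target for morphisms in $\C$; the Structure and Transitivity cases reduce directly to the inductive hypothesis on the subderivations; and the Replacement case combines the inductive hypothesis on each $\varphi_i$ with the consistency condition of Definition \ref{defn:trt}, which states exactly that the label $\tau$ picks out a unique rule $[F(\overline{x}^n)] \to [G(\overline{x}^n)]$ whose substitution instance is being formed.

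For part (2) I would prove existence of a reduction with label $1_t$ and source and target $[t]$ by induction on the structure of $t$. If $t \in \Ob(\C)$ then the identity $1_t \in \Mor(\C)$ yields the required reduction via (Inheritance); otherwise $t = F(t_1,\dots,t_n)$ and by induction each $1_{t_i} : [t_i] \to [t_i]$ has already been constructed, so (Structure) produces $F(1_{t_1},\dots,1_{t_n}) : [F(t_1,\dots,t_n)] \to [F(t_1,\dots,t_n)]$. Uniqueness of this reduction --- in the sense that any derivation labelled $1_t$ must have source and target $[t]$ --- is then immediate from part (1).

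The main obstacle I anticipate is the housekeeping required to make the label-as-outer-constructor argument rigorous: one must verify that the four label constructors are formally disjoint (so that $\alpha$ really does determine which rule introduced it, and the subderivations are recoverable from the subterms of $\alpha$), and one must also check that the recursive formula for $1_t$ descends from representatives to the congruence classes modulo $\theta_\F$ on which it is nominally indexed. Both points are unexciting but need to be spelled out before the structural induction can be applied cleanly.
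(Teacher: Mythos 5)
Your proposal is correct and is essentially the argument the paper intends: the paper gives no explicit proof, remarking only that the consistency condition of Definition \ref{defn:trt} ``easily yields'' the lemma, and your structural induction on the label (invoking that consistency condition in the Replacement case, with the uniqueness of source and target in $\C$ for Inheritance) together with the induction on $t$ and appeal to part (1) for part (2) is the natural way to fill in exactly that omitted routine argument.
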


We now have the main ingredients for defining a covariant structure
carried by a category. What remains is to ensure that the function
symbols behave as functors, that the reduction rules behave as natural
transformations and that we can stipulate coherence conditions. 

\begin{definition}
  Let $\C$ be a category. A \emph{covariant $2$-structure} on $\C$ is a
  tuple $\langle\F,\theta_\F,\T,\theta_\T \rangle$, where
  $\langle \F,\theta_\F,\T\rangle$ is a labelled 
  term rewriting theory and $\theta_\T$ is a set of equations
  on $\Term_\C(\F,\theta_\F,\T)$ satisfying the following consistency
  condition:
  \begin{center}
    If $\varphi_1 = \varphi_2$ is in $\theta_\T$ and $\varphi_1: s_1 \to t_1$
    and $\varphi_2:s_2 \to t_2$, then $s_1 = s_2$ and $t_1 = t_2$.
  \end{center}
  In other words, we can set two reductions to be equal only if their
  source and target match. We further stipulate that the following
  equations form a subset of $\theta_\T$. 
  \begin{center}
    \begin{tabular}{cl}
      $1_s\of\varphi = \varphi$ & {\rm (ID 1)}\\
      $\varphi\of 1_t = \varphi$ & {\rm (ID 2)}\\
      $\varphi\of(\psi\of\rho) = (\varphi\of\psi)\of\rho$ & {\rm (Assoc)}\\
      $F(\varphi_1,\dots,\varphi_n)\of F(\psi_1,\dots,\psi_n) =
      F(\varphi_1\of\psi_1,\dots,\varphi_n\of\psi_n)$ & {\rm (Funct)}\\
      $\varphi(\varphi_1,\dots,\varphi_n) =
      s(\varphi_1,\dots,\varphi_n)\of\varphi(t_1,\dots,t_n)$ & {\rm (Nat
        1)}\\
      $\varphi(\varphi_1,\dots,\varphi_n) = \varphi(s_1,\dots,s_n)\of
      t(\varphi_1,\dots,\varphi_n)$ & {\rm (Nat 2)}
    \end{tabular}
  \end{center}
  In the above, $\varphi:s \to t$ and $\varphi_i:s_i \to t_i$ are in
  $\Term_\C(\F,\theta_\F,\T)$ and $F \in \F_n$. 
\end{definition}

Since the only structures we deal with in this paper are covariant, we
shall henceforth take ``$2$-structure'' to mean ``covariant
$2$-structure''. Our final task is to generate a congruence on
reductions.

\begin{definition}
  If $\langle\tuple\rangle$ is a $2$-structure on a category $\C$, then 
  $\widehat{\theta_\T}$ denotes the smallest congruence generated by
  $\theta_\T$ on $\Term_\C(\F,\theta_\F,\T)$. It is generated
  inductively by the following rules:

  \begin{center}
    \begin{tabular}{cll}
    
      \bigskip
      \AxiomC{}
      \UnaryInfC{$\varphi=\varphi$}
      \DisplayProof
      
      & {\rm (Identity)}
      
      & $\varphi \in \T$\\
      \bigskip
      
      \AxiomC{}
      \UnaryInfC{$\varphi_1 = \varphi_2$}
      \DisplayProof
      
      & {\rm (Inheritance)}

      & $(\varphi_1,\varphi_2) \in \theta_\T$\\
      \bigskip
      
      \AxiomC{$\varphi = \psi$}
      \UnaryInfC{$\psi = \varphi$}
      \DisplayProof

      &  {\rm (Symmetry)}\\
      
      \AxiomC{$\varphi_1=\psi_1~ \dots~ \varphi_n = \psi_n$} 
      \UnaryInfC{$F(\varphi_1,\dots,\varphi_n) = F(\psi_1,\dots,\psi_n)$} 
      \DisplayProof
    
      &  {\rm (Structure)}
      \bigskip
      & $F \in \F_n$\\

      \AxiomC{$\varphi_1=\psi_1~ \dots~ \varphi_n = \psi_n$}
      \UnaryInfC{$\tau(\varphi_1,\dots,\varphi_n) =
        \tau(\psi_1,\dots,\psi_n)$}  
      \DisplayProof
      
      & {\rm (Replacement)}
      \bigskip
      & $\tau \in \T_n$\\

      \AxiomC{$(\varphi_1 = \psi_1): s \to u$}
      \AxiomC{$(\varphi_2=\psi_2): u \to t$}
      \BinaryInfC{$(\varphi_1\of\psi_1 = \varphi_2\of\psi_2) :s \to t$} 
      \DisplayProof
      
      &  {\rm (Transitivity)}
      
    \end{tabular}

  \end{center}
\end{definition}

We are now in a position to define our main object of study.

\begin{definition}
Given a $2$-structure $\langle\tuple\rangle$ on a category $\C$, we
use $\Fr_\C(\tuple)$ to denote the quotient 
$\Term_\C(\F,\theta_\F,\T)/\widehat{\theta_\T}$.  
\end{definition}

Our notion of a covariant $2$-structure essentially recasts Kelly's
definition of a fully covariant club \cite{Kelly_coherence} in the
language of term rewriting theory. The construction of $\Fr_\C(\S)$
parallels Kelly's construction of the functor part of an equational
doctrine on ${\bf Cat}$ whose algebras are precisely the free
$\S$-algebras, relative to an appropriate notion of weak morphism
between $\S$-algebras. With this observation, we have the following
theorem.

\begin{theorem}[Kelly, \cite{Kelly_clubs}]
  $\Fr_\C(\S)$ is the initial $\S$-algebra on $\C$. \qed
\end{theorem}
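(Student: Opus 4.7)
The plan is to verify the universal property of $\Fr_\C(\S)$ directly by induction on term and derivation structure. First I would confirm that $\Fr_\C(\S)$ itself carries a natural $\S$-algebra structure: its objects are the congruence classes in $\Term_{\Ob(\C)}(\F,\theta_\F)$, each $F \in \F_n$ acts as a functor via the (Structure) rule, each labelled reduction rule $\tau \in \T$ induces a natural transformation via (Replacement), and the functoriality, naturality, identity and associativity axioms are already forced into $\theta_\T$, while every remaining defining equation of $\S$ is present in $\theta_\F$ or $\theta_\T$ by construction. There is a canonical functor $\iota_\C:\C \to \Fr_\C(\S)$ given by $x \mapsto [x]$ on objects and $f \mapsto [f]$ on morphisms.

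Given any $\S$-algebra $\mathcal{A}$ equipped with a functor $G:\C \to \mathcal{A}$, I would extend $G$ to a map $\widehat{G}:\Fr_\C(\S) \to \mathcal{A}$ by recursion on the structure of the generating rules. On objects, $\widehat{G}([x]) = G(x)$ for $x \in \Ob(\C)$ and $\widehat{G}([F(t_1,\dots,t_n)]) = F^{\mathcal{A}}(\widehat{G}([t_1]),\dots,\widehat{G}([t_n]))$. On morphisms, each of the four rules has an obvious counterpart in $\mathcal{A}$: (Inheritance) sends $[f]$ to $G(f)$; (Structure) and (Replacement) are interpreted using the functors and natural transformations attached to $F$ and $\tau$ in $\mathcal{A}$; and (Transitivity) uses composition in $\mathcal{A}$. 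Uniqueness of the extension is immediate because being an $\S$-algebra morphism pins down its value on every generator.

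The substantive step is verifying that $\widehat{G}$ descends to the quotients by $\theta_\F$ on objects and by $\widehat{\theta_\T}$ on morphisms. Well-definedness on objects is forced by the fact that every equation in $\theta_\F$ holds in $\mathcal{A}$. For morphisms, I would induct on the generating rules of $\widehat{\theta_\T}$: the (Identity) and (Inheritance) cases hold because $\mathcal{A}$ satisfies $\theta_\T$, (Symmetry) is trivial, and (Structure), (Replacement) and (Transitivity) follow because the corresponding operations in $\mathcal{A}$ are congruent with respect to equality in each argument. The main obstacle is the bookkeeping in this well-definedness check — particularly for (Nat 1) and (Nat 2), where one has to track carefully how a generic reduction $\varphi$ interacts with a uniform substitution by specific reductions $\varphi_1,\dots,\varphi_n$. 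This is precisely the content of the functor-part construction of an equational doctrine on ${\bf Cat}$ carried out in \cite{Kelly_clubs}; once the translation between our formulation and Kelly's fully covariant clubs is in hand, one transports Kelly's conclusion rather than repeating the inductive argument in full.
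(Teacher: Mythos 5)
Your proposal is correct and, at bottom, takes the same route as the paper: the paper gives no argument beyond observing that a covariant $2$-structure recasts Kelly's fully covariant clubs and that the construction of $\Fr_\C(\S)$ parallels Kelly's construction of the functor part of the associated doctrine, and then cites \cite{Kelly_clubs}; you sketch the standard universal-property verification but likewise transport Kelly's result for the substantive well-definedness step. The extra scaffolding you supply (the algebra structure on $\Fr_\C(\S)$, the recursive extension $\widehat{G}$, and the induction over the generating rules of $\widehat{\theta_\T}$) is accurate and consistent with what the citation is being asked to carry.
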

Our main concern is to fully describe $\Fr_\C(\S)$ in the case where
$\C$ is a discrete category in terms of the generators and relations
in $\S$.  Moreover, we only wish to consider diagrams that are as
general as possible. To this end, we formalise the notion that a
reduction has ``as many variables as possible''. We begin by defining
the shape of a reduction.

\begin{definition}
  Let $\langle\tuple\rangle$ be a $2$-structure on a category
  $\C$. The \emph{Shape} of a reduction $\alpha \in
  \Term_\C(\F,\theta_\F,\T)$ is defined recursively by the following:
  \begin{equation*}
    \Shape(\alpha) = 
    \begin{cases}
      \Shape(\alpha_1)\of\Shape(\alpha_2) & \text{if $\alpha =
        \alpha_1\of\alpha_2$} \\
      \tau(\Shape(\alpha_1),\dots,\Shape(\alpha_n)) & \text{if $\alpha
        = \tau(\alpha_1,\dots,\alpha_n)$}\\
      F(\Shape(\alpha_1),\dots,\Shape(\alpha_n)) & \text{if $\alpha =
        F(\alpha_1,\dots,\alpha_n)$}\\
      \circ & \text{otherwise}
    \end{cases}
  \end{equation*}
\end{definition}

In the system from Example \ref{example:associative}, we have:

\begin{equation*}
  \Shape(\alpha(1_A,1_B,1_C)) = \Shape(\alpha(1_A,1_A,1_A)) =
  \alpha(\circ,\circ,\circ)
\end{equation*}

We now need a precise definition of the variables present in a
reduction.

\begin{definition}
  Given a $2$-structure $\langle\tuple\rangle$ on a category $\C$, the
  set of variables in a reduction $\alpha \in
  \Term_\C(\F,\theta_\F,\T)$ is defined recursively as follows:
  \begin{equation*}
    \var(\alpha) =
    \begin{cases}
      \var(\alpha_1)\cup\var(\alpha_2) &\text{if $\alpha =
        \alpha_1\of\alpha_2$}\\
      \bigcup_{i=1}^n \var(\alpha_i) & \text{if $\alpha =
        \tau(\alpha_1,\dots,\alpha_n)$}\\
      \bigcup_{i=1}^n \var(\alpha_i) & \text{if $\alpha =
        F(\alpha_1,\dots,\alpha_n)$}\\
      \alpha & \text{otherwise}
    \end{cases}
  \end{equation*}
\end{definition}

Returning to Example \ref{example:associative}, we find that
$\var(\alpha(1_A,1_B,1_C)) = \{1_A,1_B,1_C\}$, whereas
$\var(\alpha(1_A,1_A,1_A)) = \{1_A\}$. We can finally nail down what
we mean when we say a reduction has the maximum possible number of
variables. 

\begin{definition}
  Given a $2$-structure $\langle\tuple\rangle$ on a category $\C$, a
  reduction $\alpha\in \Term_\C(\F,\theta_\F,\T)$ is \emph{in general
    position} if 
  \begin{equation*}
    |\var(\alpha)| = \max\{|\var(\tau)| ~:~ \tau \in
    \Term_\C(\F,\theta_\F,\T) ~ \textrm{and}~ \Shape(\tau) =
    \Shape(\alpha)\}.
  \end{equation*}
\end{definition}

\begin{example}
  Consider the system from Example \ref{example:associative} augmented
  with the following reduction rule:
  \[
  \beta(x) : x\tensor x \to x
  \]

  Then, 
  $$\alpha(1_A,1_A,1_B)\of(\beta(1_A)\tensor 1_B): A \tensor (A
  \tensor B) \to A \tensor B$$ 
  is in general position, whereas
  $$\alpha(1_A,1_A,1_B)~:~A\tensor(A\tensor B) \to (A\tensor
  A)\tensor B$$ 
  is not in general position. 
\end{example}

Refining our previous remarks, in order to investigate coherence
problems, we need only consider reductions that are in general
position in a $2$-structure on a discrete category. In the following
section, we tackle the problem of deciding whether such a diagram
commutes.   

\section{Lambek Coherence}\label{sect:lambek}

Given a $2$-structure $\S$ on a category $\C$,
we often wish to determine whether a diagram in $\Fr_\C(\S)$
commutes. Such a diagram may commute due to commutativity of diagrams
already present in $\C$, or it may commute purely as a result of the
structure present in $\S$. It is the latter case that concerns us
here and, as such, we may make the assumption that $\C$ is
discrete.

\begin{definition}[Lambek Coherence]
  A $2$-structure on a category $\C$ is \emph{Lambek coherent} if
  it is decidable whether two reductions in general position having
  the same source and target are equal whenever $\C$ is a discrete
  category. 
\end{definition}

An immediate question that arises is whether every $2$-structure is
Lambek coherent. Unsurprisingly, the answer is no, even in the case
that the $2$-structure is finitely presented. 

\begin{theorem}\label{thm:undecidable}
  There exist finitely presented $2$-structures that are not
  Lambek-coherent. 
\end{theorem}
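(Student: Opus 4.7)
The plan is to embed an undecidable word problem into a finitely presented $2$-structure. The classical input is the Novikov--Boone theorem, which supplies a finitely presented group $G = \langle g_1,\ldots,g_n \mid r_1,\ldots,r_m\rangle$ whose word problem is algorithmically undecidable. I would realise $G$ as the endomorphism group of $F(*)$ inside $\Fr_\C(\S)$ for a suitable $\S$, and then read the theorem off as a direct transport of this undecidability through the identification.

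Concretely, take $\C$ to be the one-object discrete category $\{*\}$, let $\F$ consist of a single unary function symbol $F$ with $\theta_\F = \emptyset$, and, for each generator $g_i$ of $G$, introduce a pair of reduction rules $\alpha_i, \beta_i : F(x) \to F(x)$ representing $g_i$ and $g_i^{-1}$. Let $\theta_\T$ contain the mandatory coherence axioms together with $\alpha_i(x) \of \beta_i(x) = 1_{F(x)} = \beta_i(x) \of \alpha_i(x)$ for each $i$ and, for each relator $r_j$, one equation $w_j(x) = 1_{F(x)}$, where $w_j$ is the evident composite in the $\alpha$'s and $\beta$'s. This $\S$ is manifestly finitely presented.

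The heart of the proof is the claim that $\Hom_{\Fr_\C(\S)}(F(*),F(*)) \cong G$. Since $\C$ has only the morphism $1_*$, every reduction with source and target $F(*)$ is automatically in general position, and (Inheritance), (Structure) and (Replacement) generate only composites of the $[\alpha_i(1_*)]$ and $[\beta_i(1_*)]$ together with the identity. The assignment $g_i \mapsto [\alpha_i(1_*)]$ therefore induces a surjective group homomorphism $G \twoheadrightarrow \Hom_{\Fr_\C(\S)}(F(*),F(*))$. The main obstacle is showing this surjection is injective: one must rule out any further identification forced by the mandatory axioms. By inspection, ({\rm Funct}) only equates reductions whose outer operation is $F$, so it cannot relate parallel composites at $F(*)$; ({\rm Nat 1--2}) instantiated at $1_*$ degenerate to tautologies because $F(1_*) = 1_{F(*)}$; and ({\rm Assoc}), ({\rm ID 1--2}) merely install the monoid structure we are already using. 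The cleanest way to seal the argument is to exhibit an interpretation of $\S$ in a $2$-category where the endomorphism group of the chosen object is visibly $G$, for instance via the left regular action of $G$ on itself; faithfulness of this interpretation pins the surjection down to an isomorphism. This injectivity step is where the bulk of the care lies.

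With $\Hom_{\Fr_\C(\S)}(F(*),F(*)) \cong G$ in hand, the theorem is immediate: deciding whether two reductions $\varphi,\psi : F(*) \to F(*)$ in general position are equal in $\Fr_\C(\S)$ is precisely the word problem for $G$, which is undecidable. Hence $\S$ is a finitely presented $2$-structure that fails to be Lambek coherent.
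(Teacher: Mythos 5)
Your proposal is correct and follows essentially the same route as the paper: encode an undecidable word problem into the endomorphisms of $F(*)$ via loop reductions $F(x)\to F(x)$ and impose the defining relations in $\theta_\T$. The paper simply uses a finitely presented \emph{monoid} with unsolvable word problem (so no inverse generators or invertibility equations are needed) and leaves implicit the injectivity point you rightly flag; your interpretation argument, via the initiality of $\Fr_\C(\S)$ applied to the one-object category with hom-set $G$, is a clean way to make that step explicit.
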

\begin{proof}
  Let $\langle X |R\rangle$ be a finite presentation for a monoid with an
  unsolvable word problem. Let $\S$ be the structure consisting of
  a single unary function symbol $F$, reductions $\tau_i : F(x) \to
  F(x)$ for every $\tau_i \in X$ and relations $(\omega_i,\omega_j)$ for
  every $(\omega_i,\omega_j) \in R$. Then the Lambek coherence problem for
  $\S$ is equivalent to the word  problem for $\langle X |R\rangle$
  and is hence undecidable. 
\end{proof}

Seeking to understand the reasons why a $2$-structure could fail to be
Lambek coherent, one may well suspect that termination is a key
feature. 

\begin{definition}[Termination]
A $2$-structure on a category $\C$ is \emph{terminating} if whenever
$\C$ is a discrete category, every infinite chain
\[
t_1 \stackrel{\alpha_1}{\to} t_2 \stackrel{\alpha_2}{\to} t_3
\stackrel{\alpha_3}{\to} \dots
\]
in $\Fr_\C(\S)$ contains cofinitely many identity reductions. 
\end{definition}

One may reasonably put forward the question as to whether every
\emph{terminating} $2$-structure is Lambek coherent. It is a classical
result of term rewriting theory that termination is an undecidable
property (see, for example, \cite{terese:decidability}). Since the
example constructed in Theorem \ref{thm:undecidable} is not
terminating, it is entirely possible that this is the point at which
undecidability of Lambek coherence creeps in. In this section, we show
that this intuition is roughly correct. In fact, we require a slightly
weaker property than termination, which allows the result to be
applicable to systems such as that consisting solely of a unary
function symbol $F$ and the reduction rule $F(x) \to
F(F(x))$. However, we do need to work modulo the decidability of the
word problem at the object level. 

Our general approach is to examine the collection of subdivisions of a
given parallel pair of reductions in general position. We seek a
general criterion that ensures that any such pair admits only finitely
many subdivisions. If this is the case, we may enumerate the
subdivisions of a given parallel pair and examine each face for
commutativity. We first need to develop an appropriate definition of a
subdivision. 

\subsection{Subdivisions}

A subdivision of a parallel pair of reductions is, in the first
instance, a collection of reductions having the same source and
target.

\begin{definition}
  An \emph{st-graph} is a labelled directed graph $G$ (possibly with
  loops and multiple edges) together with
  two distinguished vertices $u$ and $v$, called the source and target
  of $G$ respectively, such that for any other
  vertex $w \in G$, there exist paths $u \to w$ and $w \to v$ in $G$.
\end{definition}

Of particular interest to us are st-graphs contained in the reduction
graph of a $2$-structure.

\begin{definition}
A morphism  $\varphi \in \Fr_\C(\M)$ is \emph{irreducible} if $\varphi =
\varphi_1\of\varphi_2$ implies that $\varphi_1 = 1$ or $\varphi_2 = 1$.
\end{definition}

\begin{definition}[Reduction graph]
  Let $\S$ be a $2$-structure on a discrete category $\C$.  The
  expression   $\Red_{\S,\C}$ denotes the \emph{reduction graph} of
  $\S$ on   $\C$. This graph has 
  \begin{itemize}
    \item Vertices: The set $\Term_{\Ob(\C)}(\F,\theta_\F)$.
    \item Edges: Irreducible morphisms in $\Fr_\C(\S)$. 
  \end{itemize}
\end{definition}

A subdivision corresponds to a particular way of embedding an st-graph
in the oriented plane. Given a graph $G$, we use $|G|$ to denote its
geometric realisation. We write $\R^2$ for the plane with the
clockwise orientation.

\begin{definition}
  Let $G$ be a graph and $\alpha,\beta \in G(s,t)$.. A
  \emph{pre-subdivision} of $\langle\alpha,\beta\rangle$ is a pair
  $(S,\varphi)$ such that:
  \begin{enumerate}
    \item $S$ is an st-graph.
    \item $\{\alpha,\beta\} \subseteq S \subseteq G$.
    \item $\varphi:|S| \hookrightarrow \R^2$ is a planar embedding.
    \item For every edge $\gamma \in S$, the image $\varphi(|\gamma|)$
      is contained in the region of $\R^2$ bounded by
      $\varphi(|\alpha|)$ and $\varphi(|\beta|)$.
  \end{enumerate}
  We use $\mathrm{PSub}_G(\alpha,\beta)$ to denote the set of all
  pre-subdivisions of $\langle\alpha,\beta\rangle$ in $G$.
\end{definition}

The definition of pre-subdivisions admits too many different embeddings
of the same graph. To this end, we define a useful equivalence
relation on pre-subdivisions. 

Given a graph $G$, let $\alpha,\beta \in G(s,t)$. Let
$\langle S_1,\varphi\rangle$ and $\langle S_2,\psi\rangle$ be
pre-subdivisions of $\langle\alpha,\beta\rangle$. Define $\sim$ to be
the equivalence relation on $\mathrm{PSub}_G(\alpha,\beta)$ generated
by setting $\langle S_1,\varphi\rangle \sim \langle S_2,\psi\rangle$
if:
\begin{enumerate}
  \item $S_1 = S_2$.
  \item $\varphi$ and $\psi$ are ambiently isotopic.
\end{enumerate}

We write $\Sub_G(s,t)$ for the quotient $\mathrm{PSub}_G(s,t)/\!\sim$.

\begin{definition}
  For a directed graph $G$ and $\alpha,\beta \in G(s,t)$, a
  \emph{subdivision} of $\langle\alpha,\beta\rangle$ is a member of
  $\Sub_G(s,t)$. For a $2$-structure $\S$ on a discrete category $\C$,
  a subdivision of a parallel pair of morphisms $\alpha,\beta \in
  \Fr_\C(S)$ is a subdivision of $\langle\alpha,\beta\rangle$ in
  $\Red_{\S,\C}$. The set of all such subdivisions is denoted
  $\Sub_{\S,\C}(\alpha,\beta)$.
\end{definition}

Recall that a directed graph $G$ is \emph{locally finite} if $G(s,t)$
is finite for all vertices $s,t \in G$. The following sequence of
lemmas establishes a correspondence between local finiteness and
finitely many subdivisions. 

\begin{lemma}\label{lem:fin_image}
 For a directed graph $G$ and a finite planar subgraph $S \le
 G(s,t)$, there are only finitely many subdivisions of
 $\alpha,\beta \in G(s,t)$ having graph $S$.
\end{lemma}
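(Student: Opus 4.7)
The plan is to reduce the counting of subdivisions with fixed underlying graph $S$ to the combinatorial classification of planar embeddings of a finite graph. Once $S$ is fixed, a pre-subdivision is determined by the planar embedding $\varphi : |S| \hookrightarrow \R^2$, and the equivalence $\sim$ (restricted to pre-subdivisions with graph $S$) identifies ambiently isotopic embeddings. So it suffices to show that $|S|$ admits only finitely many planar embeddings up to ambient isotopy, and then observe that the bounding condition on $\varphi(|\alpha|)$ and $\varphi(|\beta|)$ only further restricts this already finite set.

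First I would record that $S$ is connected: the st-graph condition supplies, for every vertex $w$, directed paths $s \to w$ and $w \to t$, so the underlying undirected graph is connected. This brings $|S|$ into the scope of the standard classification, which for disconnected graphs would require additional bookkeeping about the relative placement of components.

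Next, I would invoke the classical result from topological graph theory: planar embeddings of a finite connected graph into $\R^2$, taken up to orientation-preserving ambient isotopy, are in bijection with pairs consisting of a \emph{rotation system} (a cyclic ordering of the edge-ends at each vertex) together with a distinguished face designated as the unbounded outer face. Since $S$ is finite, each vertex has finite degree, so there are only finitely many rotation systems; and once a rotation system is chosen the set of faces is finite (they are enumerated by orbits of the face-tracing permutation on half-edges), so the set of combinatorial embeddings is finite. Hence $|S|$ has only finitely many equivalence classes of planar embeddings.

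Finally, the extra condition that every edge image $\varphi(|\gamma|)$ lies in the region of $\R^2$ bounded by $\varphi(|\alpha|) \cup \varphi(|\beta|)$ is a property of the combinatorial embedding (it amounts to asking that $\alpha$ and $\beta$ bound a disk whose interior contains all other edges and that the distinguished outer face lies outside this disk). It therefore cuts out a subcollection of the finitely many classes produced above, giving finitely many subdivisions with graph $S$. The main obstacle is the combinatorial classification itself, which is delicate to prove from scratch but is entirely standard; I would cite it from the topological graph theory literature rather than re-derive the passage from a topological embedding to its rotation-system invariant.
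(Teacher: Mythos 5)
Your proof is correct, but it follows a different route from the paper's. The paper argues directly that, once embeddings are taken up to ambient isotopy, a subdivision with underlying graph $S$ is completely determined by recording, for each parallel pair of paths $\gamma_1,\gamma_2$ in $S$, which edges are mapped into the region bounded by $\varphi(|\gamma_1|)$ and $\varphi(|\gamma_2|)$; since $S$ is finite this combinatorial record admits only finitely many values. You instead pass through the classical topological-graph-theory classification: using the st-graph condition to get connectedness of $S$, you invoke the fact that planar embeddings of a finite connected graph up to ambient isotopy are captured by a rotation system together with a choice of outer face, of which there are finitely many, and then observe that the condition that all edges lie between $\varphi(|\alpha|)$ and $\varphi(|\beta|)$ only shrinks this finite set (indeed, for finiteness you only need an injection of subdivisions with graph $S$ into these finitely many classes, so the slight overstatement of a ``bijection''---not every rotation system is realizable in the plane---is harmless). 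The trade-off: the paper's argument is shorter but its key determination claim is asserted rather than proved, and is itself a nontrivial invariance statement; your version outsources exactly that nontrivial step to a standard, citable theorem, which makes the finiteness count cleaner and more rigorous at the cost of importing machinery (rotation systems, face-tracing) that the paper does not otherwise use.
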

\begin{proof}
  Since we only consider embeddings of $S$ up to ambient isotopy, a
  subdivision with graph $S$ is completely determined by the set of
  edges mapped to the region bounded by $\varphi(|\gamma_1|)$ and
  $\varphi(|\gamma_2|)$ for every 
  parallel pair of paths $\gamma_1,\gamma_2 \in S$. Since $S$ is
  finite, there are only finitely many possibilities for this.
\end{proof}

\begin{lemma}\label{lem:subgraph}
  An st-graph with source $s$ and target $t$ is finite if and only if
  it has finitely many planar st-subgraphs with source $s$ and target $t$.
\end{lemma}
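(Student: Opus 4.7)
The forward direction is immediate: a finite graph has only finitely many subgraphs in total, hence only finitely many planar st-subgraphs with source $s$ and target $t$. The content of the lemma is the converse, which I plan to establish by contrapositive: assuming $G$ is infinite, I will produce infinitely many distinct finite planar st-subgraphs of $G$.

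The overall plan is to construct, for each vertex $w \in V(G)$, a finite planar st-subgraph $H_w$ of $G$ containing $w$. Given such a family, it cannot have only finitely many distinct members, since a finite union of finite subgraphs is itself finite, while the family collectively covers the infinite set $V(G)$. The case in which $V(G)$ is finite but $|E(G)| = \infty$ is handled identically by indexing over edges instead, taking $H_e$ for $e = (u,v)$ to be the edge $e$ together with a simple $s \to u$ path and a simple $v \to t$ path. For the construction of $H_w$ itself, I would take walks $s \to w$ and $w \to t$ in $G$ (both of which exist because $G$ is an st-graph), thin them to simple directed paths $P_w$ and $Q_w$, and set $H_w := P_w \cup Q_w$; that $H_w$ is an st-subgraph is then routine, since any vertex on $P_w$ reaches $t$ by completing $P_w$ to $w$ and then following $Q_w$, and any vertex on $Q_w$ is reached from $s$ symmetrically.

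The hard part is verifying that $H_w$ is planar. Although $P_w$ and $Q_w$ are each individually planar, their union may share interior vertices in orders along the two paths that are combinatorially incompatible, and one has to rule out $K_5$- or $K_{3,3}$-minors arising from such configurations. The technical step I would isolate is therefore a standalone planarity lemma: the union of two simple paths with a common endpoint always admits a planar embedding. My approach to proving it is to draw $P_w$ as a horizontal segment and to route each maximal sub-arc of $Q_w$ between consecutive shared vertices into either the upper or the lower half-plane, with the choice of side governed by a two-colouring of the arc-linking graph. Establishing that this linking graph is bipartite---or, failing that, falling back to choosing $P_w$ and $Q_w$ so as to minimise shared interior vertices---is where most of the real work sits. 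Once the planarity lemma is secured, the pigeonhole argument above completes the proof.
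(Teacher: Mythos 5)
Your forward direction is the same one-line argument as the paper's. The converse, however, hinges on a claim that is simply false: the union of two simple paths with a common endpoint need \emph{not} be planar, so the ``standalone planarity lemma'' you isolate as the technical core cannot be proved. Concretely, take $P = a \to x \to b \to y \to c \to z \to w$ and $Q = w \to y \to a \to z \to b \to x \to c$. Both are simple, $P$ ends where $Q$ begins (at $w$), yet the underlying graph of $P \cup Q$ contains all nine edges of $K_{3,3}$ with parts $\{a,b,c\}$ and $\{x,y,z\}$, hence is non-planar; moreover $P \cup Q$ is itself an st-graph with source $a$ and target $c$, so this is exactly a configuration $H_w = P_w \cup Q_w$ of the kind your construction produces (with $s=a$, $t=c$). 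In particular, the arc-linking graph you intend to two-colour is genuinely non-bipartite in such cases, and your fallback --- choosing $P_w,Q_w$ to minimise shared interior vertices --- is left entirely unargued, even though it is now the step on which the whole converse rests. As written, the proposal is incomplete at precisely the point you flagged as ``the hard part''.

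For comparison, the paper avoids the difficulty altogether: its converse just observes that every simple path from $s$ to $t$ is already a planar st-subgraph, so one only needs infinitely many $s \to t$ paths rather than a planar st-subgraph through each prescribed vertex $w$ --- it is the insistence on passing through $w$ that created your planarity problem. If you do want your through-$w$ construction to work, the usable statement is weaker than your lemma: when $P_w$ and $Q_w$ meet \emph{only} at $w$, their union is a single simple $s \to t$ path and is trivially planar; in the acyclic reduction graphs to which this lemma is applied later in the paper, a second common vertex of $P_w$ and $Q_w$ would produce a directed cycle, so disjointness away from $w$ can always be arranged there. For st-graphs with directed cycles some further argument is genuinely needed, and neither your pigeonhole framing nor the proposed bipartiteness argument supplies it.
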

\begin{proof}
($\Rightarrow$) A finite graph has finitely many subgraphs, so it
certainly has finitely many planar subgraphs.

($\Leftarrow$) Suppose that $G$ is an infinite st-graph with source $s$
and target $t$. Each path from $s$ to $t$ in $G$ determines a planar
subgraph of $G$, hence $G$ has infinitely many planar subgraphs with
source $s$ and target $t$.
\end{proof}

Combining the Lemma  \ref{lem:fin_image} and Lemma \ref{lem:subgraph},
we obtain the desired correspondence.

\begin{lemma}\label{lem:fin_sub}
If $G$ is a directed graph containing vertices $s$ and $t$, then
$G(s,t)$ is finite if and only if $\Sub_G(\alpha,\beta)$ is finite for
all $\alpha,\beta \in G(s,t)$
\end{lemma}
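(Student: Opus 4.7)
The plan is to combine Lemma \ref{lem:fin_image} and Lemma \ref{lem:subgraph} through an intermediary subgraph $G^{st} \subseteq G$ that collects all vertices and edges of $G$ lying on at least one path from $s$ to $t$. By construction $G^{st}$ is an st-graph with source $s$ and target $t$, and any subdivision of any pair $\alpha,\beta \in G(s,t)$ has its underlying graph contained in $G^{st}$.

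For the forward direction, I would assume $G(s,t)$ finite, note that each path is a finite edge sequence so that $G^{st}$ is a finite st-graph, and apply Lemma \ref{lem:subgraph} to get finitely many planar st-subgraphs of $G^{st}$ with source $s$ and target $t$. Since the underlying graph of any element of $\Sub_G(\alpha,\beta)$ is one such planar st-subgraph (planarity comes for free from the ambient embedding $\varphi$ required by the definition of a pre-subdivision), and Lemma \ref{lem:fin_image} bounds the number of subdivisions with any fixed underlying graph, the product is finite.

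For the converse I would argue the contrapositive. Assuming $G(s,t)$ infinite, the observation used in the proof of Lemma \ref{lem:subgraph} that each path is itself a planar st-subgraph forces $G^{st}$ to be infinite. I would then fix two distinct $\alpha,\beta \in G(s,t)$ chosen so that the finite subgraph $\alpha \cup \beta$ is acyclic, so that it contains only finitely many $s$-$t$ paths. The infinite complement yields, for each $\pi \in G(s,t)$ not contained in $\alpha \cup \beta$, a planar st-subgraph $S_\pi := \alpha \cup \beta \cup \pi$ that admits a planar embedding $\varphi_\pi$ placing the edges of $\pi$ lying outside $\alpha \cup \beta$ inside the region bounded by the images of $\alpha$ and $\beta$. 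The resulting $(S_\pi,\varphi_\pi)$ are pairwise distinct in $\Sub_G(\alpha,\beta)$ because the underlying graphs have pairwise distinct edge sets.

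I expect the main obstacle to be the geometric step in the converse: verifying that the additional path $\pi$ can always be threaded into the region between $\alpha$ and $\beta$ in a planar fashion. This reduces to an elementary extension result for planar embeddings of st-graphs by an attached st-path, but some care is required when $\pi$ re-enters $\alpha \cup \beta$ at intermediate shared vertices, since one must then place several disjoint arcs inside the digon cut out by $\alpha$ and $\beta$ without forcing crossings.
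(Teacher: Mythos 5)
Your forward direction is exactly the paper's argument: the proof in the paper is literally ``combine Lemma \ref{lem:fin_image} and Lemma \ref{lem:subgraph}'', and your intermediary $G^{st}$, together with the observation that the underlying graph of any subdivision is a finite planar st-subgraph with source $s$ and target $t$, is a faithful (and somewhat more careful) rendering of that. Your converse also follows the idea implicit in the paper's proof of the ($\Leftarrow$) half of Lemma \ref{lem:subgraph} --- each additional path should yield a new subdivision --- but this is where there is a genuine gap, beyond the threading issue you flag yourself.

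Two steps of your converse fail for a general directed graph, which is how the lemma is stated. First, a pair $\alpha,\beta$ with $\alpha\cup\beta$ acyclic need not exist: all but one $s$--$t$ path may wind around a directed cycle. Second, and more seriously, you distinguish the classes $(S_\pi,\varphi_\pi)$ by the edge sets of their underlying graphs, but once cycles are present infinitely many paths can share a single finite edge set. In the graph with edges $s\to c$, $c\to d$, $d\to c$, $c\to t$, the set $G(s,t)$ is infinite while the whole graph has four edges; so there are only finitely many candidate graphs $S$ at all, and by Lemma \ref{lem:fin_image} only finitely many subdivisions for every parallel pair. Hence no refinement of your counting can rescue the converse in that generality --- the statement has to be read (as the paper tacitly does) in the acyclic situation arising from quasicycle-free structures, where $\alpha\cup\beta$ is automatically acyclic and distinct paths do have distinct edge sets. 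In that setting your argument reduces precisely to the geometric step you identified: showing that $\alpha\cup\beta\cup\pi$ admits a planar embedding with $\pi$ threaded into the region bounded by $\alpha$ and $\beta$, including when $\pi$ repeatedly meets $\alpha\cup\beta$. That step is not supplied, and the paper gives no argument for it either (indeed the paper only ever uses the direction ``$G(s,t)$ finite implies finitely many subdivisions''), so to complete the proposal you should either prove that embedding claim or confine your converse to the acyclic case and say so explicitly.
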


\subsection{Ensuring local finiteness}

By Lemma \ref{lem:fin_sub}, in order to ensure that every parallel
pair of paths in a directed graph has finitely many subdivisions, we
need only establish that the graph is locally finite. To this end, we
make the following definition.

\begin{definition}
  Let $G$ be a directed graph. A \emph{quasicycle} in $G$ is a pair
  $(T,t)$ such that:
  \begin{enumerate}
    \item $T$ is an infinite chain $t_0 \to t_1 \to \dots$ in $G$ 
    \item $t$ is a vertex in $G$.
    \item $G$ contains a path $t_i \to t$ for all $i \in \mathbb{N}$. 
  \end{enumerate}
\end{definition}

\begin{figure}[h]
\[
\vcenter{
\xymatrix{
  {t_0} \ar[r] \ar@/_1pc/[ddrr] & {t_1} \ar[r] \ar@/_/[ddr] & 
  {t_2} \ar[r] \ar[dd] & {t_3} \ar[r] \ar@/^/[ddl] &
  {t_4} \ar[r] \ar@/^1pc/[ddll]  & {\dots}\\
  \\
  && {t}
}
}
\]
\caption{A quasicycle}\label{fig:quasicycle}
\end{figure}
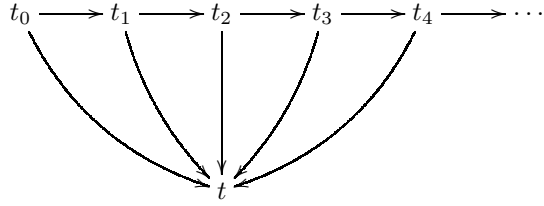

Quasicycles earn their name by being a slightly weaker notion than a
cycle. Figure \ref{fig:quasicycle} gives an example of a quasicycle
that is not a cycle. On the other hand, we have the following easy
result.

\begin{lemma}\label{lem:cycle}
  Let $C$ be a directed cycle and $c$ be a vertex in $C$. Then,
  $(C,c)$ is a quasicycle. 
\end{lemma}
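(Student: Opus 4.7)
The plan is to exhibit the required infinite chain directly by walking around the cycle. Write $C$ as vertices $v_0, v_1, \dots, v_{n-1}$ with edges $v_i \to v_{(i+1) \bmod n}$, and pick any starting vertex, say $v_0$. Define the infinite chain $T$ by $t_k := v_{k \bmod n}$, with each edge $t_k \to t_{k+1}$ being the corresponding cycle edge. This is manifestly an infinite chain in $C \subseteq G$, so condition (1) of the definition of quasicycle holds, and $c$ is a vertex of $G$ by hypothesis, giving (2).

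For condition (3), I need a path from each $t_k$ to $c$. Since $c = v_j$ for some $0 \le j < n$, and from any vertex $v_i$ one can traverse the cycle forward until reaching $v_j$ (taking $(j - i) \bmod n$ steps), every vertex of $C$ admits a directed path to $c$. In particular, each $t_k$ admits such a path.

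The statement is essentially immediate once one unpacks the definitions; the only thing to notice is that the condition ``$G$ contains a path $t_i \to t$ for all $i$'' is satisfied trivially by the cyclic structure, even though the chain $T$ itself need not close up as a single cycle. There is no real obstacle to the proof; the lemma is only recorded as motivation for the terminology ``quasicycle''.
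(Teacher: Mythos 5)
Your proof is correct; the paper actually gives no proof of this lemma (it is stated as an easy observation), and your direct unpacking of the definition---winding around the cycle to produce the infinite chain $t_k = v_{k \bmod n}$ and using the cyclic structure to obtain a directed path from every vertex to $c$---is exactly the intended argument. (If one is fussy about whether a zero-step path from $c$ to itself counts as a path, simply traverse the whole cycle of length $n$ instead, so nothing is lost.)
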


For a directed graph $G$ and a vertex $s \in G$, we use $\Out_G(s)$ to
denote the set $\{t \in V(G) : G \textrm{ contains an edge } s\to
t\}$. We say that $G$ is \emph{finitely branching} if $\Out_G(s)$ is
finite for all vertices $s \in G$. One of our main technical tools is
the following graphical version of K\"{o}nig's Tree Lemma.

\begin{lemma}\label{lem:quasi}
  A finitely branching directed graph is locally finite if and only if
  it contains no quasicycles.
\end{lemma}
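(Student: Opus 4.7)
The plan is to prove the two directions separately. The forward implication (locally finite implies no quasicycles) is a simple length argument that will not require finite branching; the converse will be obtained from K\"onig's Lemma applied to a tree of paths that remain able to reach a fixed target.

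For the forward direction, I would argue the contrapositive. Starting from a quasicycle $(T,t)$ with $T = t_0 \to t_1 \to \cdots$, for each $n \in \N$ I invoke clause (3) of the definition to obtain a path $q_n : t_n \to t$. Concatenating the initial segment $t_0 \to \cdots \to t_n$ with $q_n$ produces a path $t_0 \to t$ of length at least $n$, so $G(t_0,t)$ contains paths of unbounded length and is therefore infinite, witnessing failure of local finiteness. Note that finite branching plays no role in this half.

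For the converse direction, I assume $G$ is finitely branching and quasicycle-free, and suppose for contradiction that $G(s,t)$ is infinite for some vertices $s,t$. The key construction is the rooted tree $T_{s,t}$ whose nodes are the finite paths in $G$ from $s$ that admit some extension to a path ending at $t$, with parent-child relation given by extension by a single outgoing edge. Finite branching of $G$ immediately yields finite branching of $T_{s,t}$, since the children of a node ending at a vertex $v$ are indexed by a subset of $\Out_G(v)$. Infinitude of $T_{s,t}$ follows from the assumption that $G(s,t)$ is infinite, since every element of $G(s,t)$ trivially appears as a node. K\"onig's Lemma then furnishes an infinite branch $p_0 \subsetneq p_1 \subsetneq p_2 \subsetneq \cdots$, whose union is an infinite chain $s = v_0 \to v_1 \to v_2 \to \cdots$ in $G$; by the defining ``liveness'' property of nodes of $T_{s,t}$, each $v_i$ admits a path to $t$ in $G$, and so this chain together with $t$ is a quasicycle, contradicting the hypothesis.

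The main conceptual obstacle is precisely this restriction of nodes of $T_{s,t}$ to paths that can still reach $t$. Without it, K\"onig's Lemma would only produce some infinite path out of $s$ whose vertices need not admit paths to $t$, and the resulting structure would fail to witness a quasicycle. Everything beyond the setup of this tree is bookkeeping, plus a standard appeal to K\"onig's Lemma.
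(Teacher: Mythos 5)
Your proof is correct and takes essentially the same route as the paper: your converse is the paper's greedy pigeonhole construction repackaged as an explicit appeal to K\"onig's Lemma on the tree of finite paths out of $s$ that can still be extended to reach $t$ (the paper simply inlines that recursion), while your forward direction replaces the paper's bookkeeping about paths $\beta_i: t_i \to t$ not factoring through later $t_j$ with the cleaner observation that the chain prefixes composed with paths to $t$ yield walks $t_0 \to t$ of unbounded length. Both directions are sound at the paper's own level of rigour (like the paper, you implicitly read ``finitely branching'' as finitely many outgoing edges, which is the intended meaning despite the allowance of multiple edges).
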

\begin{proof}
  Let $G$ be a labelled finitely branching directed graph.

  ($\Rightarrow$) Suppose that $G$
  contains a quasicycle $(T,t)$, where $T = t_0
  \stackrel{\alpha_0}{\to} t_1 \stackrel{\alpha_1}\to \dots$. If $t_i
  = t$ for some $i \in \N$ then $G(t_i,t_j)$ is infinite for all $j >
  i$. So, suppose that $t_i \ne t$ for all $i \in \N$. Since $t_i \to
  t$ for all $i \in \N$, there must be infinitely many pairs
  $(i,\beta_i)$, where $i \in \N$ and $\beta_i:t_i\to t$ is a path that
  does not factor through $t_j$ for any $j > i$. So, $G(t_0,t)$ is
  infinite.

  ($\Leftarrow$) Suppose that
  $G(s,t)$ is infinite. Since $\Out_G(s)$ is finite, it follows from the
  pigeon hole principle that there must exist some $s_0 \in \Out_G(s)$
  and an edge $\alpha_0:s \to s_0$ such that $G(s_0,t)$ is
  infinite. Continuing recursively, we obtain an infinite chain $s
  \stackrel{\alpha_0}{\to} s_0 \stackrel{\alpha_1}{\to} s_1
  \stackrel{\alpha_2}{\to} \dots$ such that $G$ contains  a path $s_i
  \to t$ for all $i \in \N$. So, $G$ contains a quasicycle. 
\end{proof}

\begin{definition}
  A $2$-structure $\S$ on a category $\C$ is \emph{quasicycle-free} if
  every quasicycle in  $\Red_{\S,\C}$ contains cofinitely many
  identity reductions. It is locally finite if   $\Red_{\S,\C}$ is
  locally finite.  
\end{definition}

Lemma \ref{lem:quasi} very quickly yields the following fundamental
result.

\begin{proposition}\label{prop:quasi}
  A finitely presented $2$-structure on a discrete category is locally
  finite if and only if it is quasicycle-free.
\end{proposition}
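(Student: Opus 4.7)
The plan is to reduce the proposition to Lemma \ref{lem:quasi}. Once we know that finite presentation forces $\Red_{\S,\C}$ to be finitely branching, the stated equivalence follows by a direct appeal to that lemma, modulo matching the definition of $\S$-quasicycle-free against the notion of quasicycle used there.

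The critical step is therefore showing that $\Red_{\S,\C}$ is finitely branching whenever $\S$ is finitely presented. To this end, I would analyse the form of a non-identity irreducible morphism $\varphi:[s]\to[t]$. A top-level (Transitivity) immediately decomposes $\varphi$; a structure expression $F(\varphi_1,\dots,\varphi_n)$ with two or more non-identity arguments decomposes by (Funct) as $F(\varphi_1,1,\dots,1)\of\cdots\of F(1,\dots,1,\varphi_n)$; and any replacement $\tau(\varphi_1,\dots,\varphi_n)$ in which some $\varphi_i$ is non-identity decomposes by (Nat 1) into a purely structural piece composed with an identity-argument instance $\tau(t_1,\dots,t_n)$ of the rule. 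Iterating these observations, every non-identity irreducible morphism out of $[s]$ is, modulo $\widehat{\theta_\T}$, represented by a single rule $\tau \in \T$ applied at a single subterm position in $[s]$ with identities elsewhere. Finite presentation bounds $|\T|$, and the finite term $[s]$ has only finitely many subterm positions, so the set of irreducible morphisms out of $[s]$ is finite; hence $\Red_{\S,\C}$ is finitely branching.

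With finite branching in place, Lemma \ref{lem:quasi} gives that $\Red_{\S,\C}$ is locally finite if and only if it contains no quasicycles. The definition of $\S$-quasicycle-free permits quasicycles whose chains consist of identity reductions from some point on; such a chain is eventually stationary at a single vertex and cannot be responsible for any hom set being infinite. Thus the trivial quasicycles arising from the identity loops that every reduction graph carries are harmless, and allowing them in the definition is precisely what is needed for the biconditional of Lemma \ref{lem:quasi} to specialise correctly to $\Red_{\S,\C}$.

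The main obstacle is the characterisation of irreducible morphisms as single atomic rule applications; this rests squarely on using (Funct), (Nat 1), and (Nat 2) to peel structural and replacement expressions apart until only an elementary reduction remains. The remainder is bookkeeping together with Lemma \ref{lem:quasi}.
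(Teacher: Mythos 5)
Your proposal follows the paper's own route: deduce that finite presentation (finitely many rules, finitely many subterm positions of each term) makes $\Red_{\S,\C}$ finitely branching, then invoke Lemma \ref{lem:quasi}. Your decomposition of irreducible morphisms into single rule applications and your remark about identity quasicycles merely spell out details the paper leaves implicit, so the argument is essentially the same.
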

\begin{proof}
  Let $\S$ be a finitely presented $2$-structure on a discrete category
  $\C$. Since each term has finitely many subterms and $\S$ has
  finitely many reduction rules, $\Red_{\S,\C}$ is finitely branching. Lemma
  \ref{lem:quasi} then applies. 
\end{proof}

Lemma \ref{lem:fin_sub} and Proposition \ref{prop:quasi} together
imply that a finitely presented quasicycle-free $2$-structure on a
discrete category has only finitely many subdivisions for every
parallel pair of reductions. A ready supply of such $2$-structures is
provided by the following lemma.

\begin{lemma}
  A terminating $2$-structure on a discrete category is quasicycle free.
\end{lemma}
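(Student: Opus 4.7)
The proof should be essentially an unpacking of definitions, since the defining feature of quasicycles is exactly the existence of an underlying infinite chain, and termination is a statement about precisely such chains. My plan is as follows.

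Fix a terminating $2$-structure $\S$ on a discrete category $\C$ and let $(T,t)$ be a quasicycle in $\Red_{\S,\C}$. By definition, $T$ is an infinite chain
\[
t_0 \stackrel{\alpha_0}{\to} t_1 \stackrel{\alpha_1}{\to} t_2 \stackrel{\alpha_2}{\to} \cdots
\]
in the graph $\Red_{\S,\C}$. Since the edges of $\Red_{\S,\C}$ are by construction morphisms (indeed, irreducible morphisms) of $\Fr_\C(\S)$, the sequence $T$ is in particular an infinite chain in $\Fr_\C(\S)$ in the sense demanded by the definition of termination. The third datum $t$ of the quasicycle plays no role in this argument; it is only the infinite chain $T$ that we need to analyse.

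Next, I would simply invoke the termination hypothesis applied to $T$: since $\S$ is terminating and $\C$ is discrete, every infinite chain in $\Fr_\C(\S)$ contains cofinitely many identity reductions, so $T$ itself has cofinitely many $\alpha_i$ equal to $1_{t_i}$. By the definition of quasicycle-freeness, this is exactly what it means for the quasicycle $(T,t)$ to contain cofinitely many identity reductions. As $(T,t)$ was an arbitrary quasicycle in $\Red_{\S,\C}$, we conclude that $\S$ is quasicycle-free.

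The only conceptual point worth pausing over, and the closest thing to an obstacle, is the transition from a chain in the reduction graph $\Red_{\S,\C}$ to a chain in $\Fr_\C(\S)$. This is immediate from the definition of $\Red_{\S,\C}$, whose edges are morphisms of $\Fr_\C(\S)$, but it is the sole nontrivial verification. Everything else is a direct translation between the two definitions, so no further machinery, case analysis, or auxiliary lemma (such as Lemma \ref{lem:quasi} or Proposition \ref{prop:quasi}) is needed for this particular implication.
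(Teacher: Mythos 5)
Your argument is correct and is exactly the definition-unpacking the paper intends: it states this lemma without proof, treating it as immediate, and your key observation---that the underlying chain $T$ of a quasicycle in $\Red_{\S,\C}$ consists of (irreducible) morphisms of $\Fr_\C(\S)$ and is therefore an infinite chain to which the termination hypothesis applies directly---is precisely that immediate argument, with the vertex $t$ playing no role.
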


By Lemma \ref{lem:cycle}, a quasicycle-free directed graph is
acyclic. The following theorem establishes that every face of a
subdivision in an acyclic graph is itself a parallel pair of paths. It
was originally discovered by Power \cite{Power:pasting2} in his
investigation of pasting diagrams in $2$-categories.

\begin{theorem}[Power \cite{Power:pasting2}]\label{thm:power}
  A planar $st$-graph is acyclic if and only if every face has a
  unique source and target.
\end{theorem}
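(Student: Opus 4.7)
The plan is to prove both directions by combining planarity, the topological order available on a DAG, and the $st$-graph structure. The key technical ingredient for uniqueness in both directions is the same lemma about the cyclic arrangement of in- and out-edges at each vertex.

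For the forward direction, fix a topological order $\prec$ on $V(G)$, which exists by acyclicity. For any interior face $f$, let $v^*$ be the $\prec$-maximal vertex on $\partial f$: any outgoing edge at $v^*$ would end at a $\prec$-greater vertex, but every boundary neighbour of $v^*$ along $\partial f$ is $\prec$-smaller, so both boundary edges at $v^*$ point into $v^*$ and $v^*$ is a sink of $f$. Dually the $\prec$-minimum is a source, so each face has at least one of each. Traversing $\partial f$ cyclically, the orientation of a boundary edge relative to the traversal can flip only at sources or sinks of $f$ and the flips alternate, so $\partial f$ has equal numbers of sources and sinks. The heart of the uniqueness claim is the lemma that, in an acyclic planar $st$-graph embedded with $u$ and $w$ on the outer face, the cyclic order of edges around each interior vertex groups all in-edges together and all out-edges together; if this failed, planarity would force two $u$-to-$w$ paths through the offending vertex to interleave, producing a directed cycle. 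Granted the lemma, each interior vertex $v$ contributes exactly $d^+(v)-1$ source corners and $d^-(v)-1$ sink corners; summing over vertices and comparing with $\sum_f \vert \partial f\vert = 2E$ via Euler's formula then forces every interior face to be charged exactly one source corner and one sink corner.

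For the reverse direction, argue contrapositively. Assume $G$ contains a directed cycle and pick one, $C$, whose enclosed region $D$ is minimal. If $C$ bounds a single face $f$, then since $C$ is a directed cycle every vertex of $C$ has one boundary edge entering and one leaving $f$, so $f$ has neither a source nor a sink, contradicting the hypothesis. Otherwise $D$ properly contains further vertices and edges; by minimality no directed cycle lies strictly inside $\overline{D}$, so the induced subgraph is acyclic. Apply the corner-counting identity from the forward direction to this inner piece: each interior face is charged exactly one source corner by hypothesis, while the vertices of $C$ contribute no source corner along $C$ itself (each $C$-vertex has one in- and one out-edge along $C$). The resulting tally is incompatible with the Euler bookkeeping, yielding the contradiction.

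The main obstacle is the lemma underlying the forward direction — that in an acyclic planar $st$-graph the in-edges at each interior vertex are cyclically consecutive. This is where planarity and the $st$-graph property genuinely interact, and it is the crux on which both the uniqueness claim in $(\Rightarrow)$ and the counting step in $(\Leftarrow)$ rest. Once it is established, the rest reduces to Euler-characteristic bookkeeping and the extremal topological-order argument at boundary vertices.
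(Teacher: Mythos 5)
The genuine gap is in the reverse direction. Your Case 2 ends by asserting that ``the resulting tally is incompatible with the Euler bookkeeping,'' but no incompatibility follows from what you have set up: vertices of $C$ may well contribute source corners to the interior faces (a $C$-vertex can have its outgoing $C$-edge consecutive with an outgoing edge into the disc), and vertices strictly inside $C$ can supply exactly the source corners the faces demand. In fact, with no condition on where the source and target sit --- and your converse never invokes one --- the statement is false: take the directed triangle $a\to b\to c\to a$, a vertex $u$ drawn inside it with edges $u\to a$, $u\to b$, $u\to c$, and a vertex $v$ outside with a single edge $a\to v$. This is a planar st-graph with source $u$ and target $v$; the three inner faces have unique source $u$ and unique sinks $b$, $c$, $a$ respectively, and the outer face has unique source $a$ and unique sink $v$, yet the graph contains a directed cycle (the Euler count balances: $3$ inner faces and $E_H-n_H+1=6-4+1=3$ source corners, all at $u$). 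So the converse genuinely needs the hypothesis you attached only to your forward-direction lemma, namely that $u$ and $v$ lie on the outer face; this is how Power formulates the result, and it holds in the paper's application, where the outer face of a subdivision is bounded by $\alpha$ and $\beta$. With that hypothesis the repair is short and needs neither minimality, nor acyclicity of the inside, nor bimodality: for any directed cycle $C$, let $H$ be everything of $G$ drawn in the closed disc it bounds. Every vertex of $H$ has both an in-edge and an out-edge in $H$ ($C$-vertices via $C$; interior vertices because they are neither $u$ nor $v$ and all their edges stay in the disc), so $H$ has at most $\sum_x\bigl(d^+_H(x)-1\bigr)=E_H-n_H$ source corners, while each of the $F_H-1=E_H-n_H+1$ inner faces (which are faces of $G$) must contain one and the outer face of $H$, bounded by the directed cycle $C$, contains none. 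By contrast, ``apply the corner-counting identity from the forward direction to the inner piece'' cannot be carried out as stated, since $H$ is neither acyclic nor an st-graph.

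The forward direction is essentially right (topological-order extremes for existence, bimodality plus a corner count for uniqueness), though the count you need is that the total number of source corners equals $E-n+2=F$, not $\sum_f|\partial f|=2E$, and your one-line justification of the bimodality lemma does not work as phrased: a path from $u$ to $v$ through the offending vertex uses one in-edge and one out-edge there, and in the rotation $e_1,e_2,e_3,e_4$ (in, out, in, out) no such pair separates another, so two such paths never ``interleave'' at that vertex. The standard argument instead takes two paths from $u$ ending in the two in-edges, extracts from their union a simple closed curve through the vertex using $e_1$ and $e_3$, notes that this curve separates $e_2$ from $e_4$ and avoids $v$ (which has out-degree $0$ by acyclicity), and produces a directed cycle when the outgoing path starting on the side away from $v$ meets the curve; notice this needs no outer-face assumption, so the forward implication holds for the theorem as stated, and it is only the converse that requires it. Finally, be aware that the paper itself gives no proof --- it cites Power, whose formulation carries precisely the outer-face hypothesis your converse is missing.
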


Theorem \ref{thm:power} allows us to very easily deduce the following
result. 

\begin{proposition}\label{prop:faces}
  Let $\S$ be a $2$-structure on a discrete category $\C$ and $\alpha,\beta
  \in \Red_{\S,\C}(s,t)$. Then, the following statements are equivalent:
  \begin{enumerate}
    \item $\alpha = \beta$ in $\Fr_\C(\S)$. 
    \item There is a subdivision of $\langle\alpha,\beta\rangle$ in
      $\Red_{\S,\C}(s,t)$ such that each face commutes in
      $\Fr_\C(\S)$.
    \item There is a subdivision of $\langle\alpha,\beta\rangle$ in
      $\Red_{\S,\C}(s,t)$ such that each face is either an instance of
      functoriality, or an instance of naturality or an instance of
      one of the equations in $\theta_\T$.
  \end{enumerate}
\end{proposition}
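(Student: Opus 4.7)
The plan is to prove the cycle $(3) \Rightarrow (2) \Rightarrow (1) \Rightarrow (3)$. The implication $(3) \Rightarrow (2)$ is essentially by definition: each allowed face type is, by construction, a commuting diagram in $\Fr_\C(\S)$. For $(2) \Rightarrow (1)$, I would induct on the number of internal faces of the subdivision. By Theorem \ref{thm:power} each face has a unique source and target, so I can select an innermost face $f$ with boundary paths $p, q : u \to v$. Since $f$ commutes, replacing the $p$-boundary by $q$ yields a subdivision of $\langle\alpha,\beta\rangle$ with one fewer internal face, all of which still commute; iterating collapses the subdivision to the bigon $\alpha \rightrightarrows \beta$, establishing $\alpha = \beta$ in $\Fr_\C(\S)$.

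The substance of the proposition lies in $(1) \Rightarrow (3)$, which I would prove by induction on the derivation of $\alpha = \beta$ in the inductive generation of $\widehat{\theta_\T}$. The base cases are direct: Identity yields a degenerate subdivision (no internal face); Inheritance gives a bigon whose sole internal face is the generating equation in $\theta_\T$. Symmetry is handled by swapping which boundary carries the label $\alpha$ and which carries $\beta$, and Transitivity is handled by vertically stacking the two inductive subdivisions along the common intermediate vertex. For the Structure case, given subdivisions $S_i$ witnessing $\varphi_i = \psi_i$, I would use (Funct) to decompose both $F(\varphi_1,\dots,\varphi_n)$ and $F(\psi_1,\dots,\psi_n)$ into the composites $F(\varphi_1,1,\dots,1)\of \cdots \of F(1,\dots,1,\varphi_n)$ and $F(\psi_1,1,\dots,1)\of\cdots\of F(1,\dots,1,\psi_n)$; the decomposition bigons themselves are filled with (Funct) faces, and each of the $n$ intermediate strips is produced by applying $F$ coordinatewise to $S_i$, an operation that preserves the axiom-status of every internal face. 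The Replacement case is analogous, but uses (Nat 1) and (Nat 2) to migrate $\tau$-applications past the functorial applications of the $\varphi_i$, interpolating between $\tau(\varphi_1,\dots,\varphi_n)$ and $\tau(\psi_1,\dots,\psi_n)$ through a sequence of parallel strips in which only one argument is varied at a time.

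The main obstacle I anticipate is in the Structure and Replacement steps: one has to verify that ``lifting'' a planar subdivision by coordinatewise application of $F$, or by pre- and post-composing with $\tau$-instances, genuinely produces a new planar st-graph subdivision whose internal faces remain instances of (Funct), (Nat 1), (Nat 2), or of an equation in $\theta_\T$. A further subtlety is reconciling the subdivision living in $\Red_{\S,\C}$ --- whose edges are irreducible reductions --- with intermediate reductions produced by the above composites, which need not themselves be irreducible; this amounts to decomposing such edges into their irreducible constituents and inserting additional (Funct) faces to match, a step justified by the definition of $\Red_{\S,\C}$ and the availability of (Funct) as a face type.
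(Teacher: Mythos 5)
Your plan is correct and is essentially the argument the paper intends: the paper states this proposition without proof, remarking only that it follows ``very easily'' from Theorem \ref{thm:power}, and your cycle $(3)\Rightarrow(2)\Rightarrow(1)\Rightarrow(3)$ --- pasting by induction on faces via Power's theorem for $(2)\Rightarrow(1)$, and induction on the derivation in $\widehat{\theta_\T}$ for $(1)\Rightarrow(3)$, with ``instance'' read as including substitution and functorial/reduction contexts and with composites re-expressed as paths of irreducibles --- is the natural filling-in of that remark. The only caveats, which the paper shares rather than resolves, are that invoking Theorem \ref{thm:power} tacitly assumes the subdivision's graph is acyclic (as in the quasicycle-free setting where the proposition is actually used), and that your induction should also cover transitivity of equality (glue the two subdivisions along the shared middle path), not just the composition rule labelled (Transitivity).
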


\subsection{The Lambek coherence theorem}

With Proposition \ref{prop:faces}, Proposition \ref{prop:quasi} and
Lemma \ref{lem:fin_sub}, we are seemingly home and dry since we now
know that every quasicycle-free $2$-structure on a discrete category
has only finitely many subdivisions for each parallel pair and we can
just check every face to see whether it is an instance of
functoriality, naturality or a coherence axiom. There is, however, one
catch - we may not be able to decide whether a given face is an
instance of an axiom! 

\begin{definition}[Unification]
  Let $\F$ be a ranked set of function symbols on a set $X$ and
  $\theta_\F$ be an equational theory on $\Term_X(\F)$. A
  $\theta_\F$-unification problem is a finite set:
  \[
  \Gamma = \{(s_1,t_1),\dots, (s_n,t_n)\},
  \]
  where for $1 \le i \le n$, we have that $s_i$ and $t_i$ are in
  $\Term_\F(X)$. A \emph{unifier} for  $\Gamma$  a homomorphism
  $\sigma: \Term_\F(X) \to \Term_\F(X)$ such 
  that $\sigma(s_i) =_{\theta_\F} \sigma(t_i)$ for all $1 \le i \le
  n$. The set $\Gamma$ is \emph{unifiable} if it admits at least one
  unifier. 
\end{definition}

Unification theory is an important technical component of automated
reasoning and logic programming, as it provides a means for testing
whether two sequences of terms are syntactic variants of each other. A
good survey of the field is provided by
\cite{Baader:unification}. In the case where the theory $\theta_\F$ is
empty, the unification problem is easily shown to be
decidable. Unfortunately, the equational unification problem is in
general undecidable.  

\begin{definition}
  A $2$-structure $\langle\tuple\rangle$ has \emph{decidable term
    unification} if $\langle \F, \theta_\F\rangle$ has a decidable
  unification problem. 
\end{definition}

We can finally establish the main theorem of this section.

\begin{theorem}[Lambek Coherence]\label{thm:lambek}
  A finitely presented quasicycle-free structure with decidable term
  unification on a discrete  category is Lambek Coherent. 
\end{theorem}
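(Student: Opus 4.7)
The plan is to assemble the theorem from the pieces developed earlier in the section, namely Proposition \ref{prop:faces}, Proposition \ref{prop:quasi} and Lemma \ref{lem:fin_sub}, together with an algorithmic use of the decidable unification hypothesis for testing whether a face of a subdivision is an axiom instance. I fix a discrete category $\C$ and a parallel pair $\alpha,\beta\in\Fr_\C(\S)(s,t)$ of reductions in general position, and construct a decision procedure for the equality $\alpha=\beta$.

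First I would reduce the problem to examining finitely many finite combinatorial objects. Since $\S$ is finitely presented and quasicycle-free, Proposition \ref{prop:quasi} gives that $\Red_{\S,\C}$ is locally finite. By Lemma \ref{lem:fin_sub}, the set $\Sub_{\S,\C}(\alpha,\beta)$ is therefore finite. By Proposition \ref{prop:faces}, $\alpha=\beta$ in $\Fr_\C(\S)$ if and only if at least one such subdivision has every face an instance of functoriality, naturality or one of the equations in $\theta_\T$. So the decision procedure will be: effectively enumerate $\Sub_{\S,\C}(\alpha,\beta)$, and for each element check each face against this finite list of schemata, returning ``equal'' on the first success and ``not equal'' if every subdivision fails.

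Next I would make the enumeration effective. Because $\S$ is finitely presented, the reduction graph $\Red_{\S,\C}$ restricted to terms bounded in size by the common source and target of $\alpha$ and $\beta$ is finitely computable: every vertex appearing in a subdivision of $\langle\alpha,\beta\rangle$ is a term whose term-depth is bounded by a function of $\alpha$ and $\beta$ (since any vertex sits on a directed path from $s$ to $t$ using finitely many rewrite rules each of bounded effect on term shape), and irreducible morphisms between two such terms can be enumerated by matching each rule against each occurrence of an appropriate subterm. From this finite data one enumerates the finitely many planar st-subgraphs containing $\alpha$ and $\beta$ lying in the region they bound, and for each such st-graph enumerates its finitely many planar embeddings up to ambient isotopy. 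This step is purely combinatorial and uses only the finite presentation of $\S$.

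The main obstacle is the face-by-face check, and this is where the decidable unification hypothesis does the real work. A face of a subdivision is itself a parallel pair $\gamma,\delta$ of irreducible paths with a common source and target. Functoriality and naturality faces have a purely syntactic shape and can be recognised by inspection of the expressions for $\gamma$ and $\delta$. Recognising whether $\gamma=\delta$ is an instance of an equation $\varphi_1=\varphi_2$ in $\theta_\T$, however, amounts to deciding whether there is a substitution $\sigma$ on the free variables of $\varphi_1,\varphi_2$ making $\sigma(\varphi_1)$ and $\sigma(\varphi_2)$ equal (modulo $\theta_\F$ on source and target terms) to $\gamma$ and $\delta$ respectively. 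This is precisely a $\theta_\F$-unification problem, one for each face and each of the finitely many equations in $\theta_\T$, and by hypothesis such problems are decidable. Assembling these routines yields the desired decision procedure, proving that $\S$ is Lambek coherent.
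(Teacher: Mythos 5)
Your proposal is correct and follows essentially the same route as the paper's proof: combine Proposition \ref{prop:quasi} and Lemma \ref{lem:fin_sub} to enumerate the finitely many subdivisions of $\langle\alpha,\beta\rangle$, then use Proposition \ref{prop:faces} and the decidable term unification hypothesis to test each face for being an instance of functoriality, naturality or an axiom. The additional detail you supply about making the enumeration effective and casting the axiom-instance test as a $\theta_\F$-unification problem is a faithful elaboration of the paper's (much terser) argument, not a different approach.
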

\begin{proof}
  Let $\S$ be a $2$-structure on a discrete category $\C$ satisfying
  the hypotheses and $\alpha,\beta \in \Red_{\S\C}(s,t)$. By
  Proposition \ref{prop:quasi} and Lemma \ref{lem:fin_sub}, we can
  enumerate the subdivisions of $\langle\alpha,\beta\rangle$. Since
  each subdivision has only finitely many faces, we may apply
  Proposition \ref{prop:faces}  to determine
  whether every face of a subdivision commutes in $\Fr_\C(\S)$, since
  $\S$ has decidable term unification. 
\end{proof}

\begin{corollary}
  It is undecidable whether a finitely presented discrete structure with
  decidable term unification is quasicycle-free.
\end{corollary}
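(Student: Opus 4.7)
The plan is to reduce the termination problem for finitely presented term rewriting systems---classically undecidable even when the term-level equational theory is empty---to quasicycle-freeness in the stated class. Given a finitely presented TRS $M$ with signature $\F_M$ and rule set $\T_M$, I build $\S_M$ by adjoining a fresh nullary symbol $*$ to the signature, a single new reduction rule $\gamma(x)\colon x \to *$ to $\T_M$, and the equation $\gamma(*) = 1_*$ to $\theta_\T$. Since $\theta_\F$ remains empty, syntactic unification decides $\theta_\F$-unification; the resulting $\S_M$ is finitely presented with decidable term unification, and the assignment $M \mapsto \S_M$ is computable.

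The core claim is that $\S_M$ is quasicycle-free if and only if $M$ terminates. If $M$ does not terminate, an infinite $M$-reduction sequence $t_0 \to t_1 \to \cdots$ lifts to an infinite chain of non-identity edges of $\Red_{\S_M,\C}$, and for every $t_i$ one reaches $*$ along a path in $\Red_{\S_M,\C}$ by pushing $\gamma$ one symbol at a time through the context of $t_i$ and finishing with $\gamma$ applied at the root; hence $(t_0 \to t_1 \to \cdots,\,*)$ is a quasicycle whose chain carries no identities. Conversely, if $M$ terminates I would introduce the lexicographic measure $\mu(t) = (n_*(t),\, \ell_M(t))$, where $n_*(t)$ counts occurrences of non-$*$ subterms of $t$ and $\ell_M(t)$ is the length of the longest $M$-derivation starting at $t$ (finite by termination and finite branching of $M$). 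An atomic $M$-step leaves $n_*$ fixed---no $M$-rule mentions $*$---and strictly lowers $\ell_M$; an atomic $\gamma$-step applied at a non-$*$ subterm strictly lowers $n_*$; and an atomic $\gamma$-step whose targeted subterm is already $*$ collapses to an identity in $\Fr_\C(\S_M)$ via $\gamma(*)=1_*$ together with the functoriality axiom (Funct) propagating this identity through the surrounding context. Every non-identity edge of $\Red_{\S_M,\C}$ therefore drops $\mu$ lexicographically, so every chain contains cofinitely many identity reductions and every quasicycle is benign.

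The main technical obstacle is controlling which morphisms of $\Fr_\C(\S_M)$ are actually irreducible and therefore populate $\Red_{\S_M,\C}$ as edges; naturality of $\gamma$ forces $\gamma(t) = \phi \circ \gamma(t')$ for every reduction $\phi\colon t \to t'$, so $\gamma(t)$ is seldom irreducible on its own. The fix is to replace $\gamma(t)$ in the quasicycle argument by its decomposition into atomic one-symbol-at-a-time $\gamma$-insertions, each of which \emph{is} irreducible because, apart from trivial factorisations through $t$ or $*$, no intermediate vertex admits both an incoming and an outgoing arrow witnessing a further decomposition. With this bookkeeping in place the reduction is complete: a decision procedure for quasicycle-freeness on finitely presented structures with decidable term unification would, via $M \mapsto \S_M$, decide termination for arbitrary finitely presented term rewriting systems on a discrete category, contradicting the classical undecidability result referenced in the discussion preceding Proposition~\ref{prop:quasi}.
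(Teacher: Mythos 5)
Your route is genuinely different from the paper's, but it has a fatal gap: the equivalence on which the whole reduction rests --- ``$\S_M$ is quasicycle-free if and only if $M$ terminates'' --- fails in the direction you need. The collapsing rule $\gamma(x)\colon x \to *$ can \emph{create} $M$-redexes by equalising distinct arguments of a non-left-linear left-hand side, and this can re-enable rewriting forever. Concretely, take $M = \{f(x,x) \to f(a,b)\}$ with distinct constants $a,b$. This $M$ is terminating (term size never increases, and the number of redexes with atomic arguments strictly drops on the size-preserving steps, so the pair decreases lexicographically). Yet in $\S_M$ one has the directed cycle $f(*,*) \to f(a,b) \to f(*,b) \to f(*,*)$, whose steps are the rule instance $\alpha(1_*)$ followed by the two collapses $f(\gamma(a),1_b)$ and $f(1_*,\gamma(b))$. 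None of these is an instance of $\gamma$ at $*$, so your equation $\gamma(*)=1_*$ does not trivialise any of them, and since $\theta_\F$ is empty their sources and targets are pairwise distinct objects, so none is an identity. One checks that the only reductions out of the constants $a$, $b$ and out of $*$ are the collapses and identities, so these steps admit no non-trivial factorisations and appear as edges of $\Red_{\S_M,\C}$; by Lemma~\ref{lem:cycle} the cycle is a quasicycle with no identity reductions, so $\S_M$ is not quasicycle-free (indeed, it is not even terminating in the sense of the paper, since $\Fr_\C(\S_M)$ contains an infinite chain of non-identity reductions). Hence a quasicycle-freeness oracle applied to $\S_M$ does not answer the termination question for $M$, and your reduction does not go through. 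The same example also exposes the internal flaw in the measure: a collapse can increase $\ell_M$ (it creates the redex $f(*,*)$), which your lexicographic order tolerates, but your claim that an atomic $M$-step leaves $n_*$ fixed is false as soon as a rule's right-hand side has a different number of subterm occurrences than its left-hand side (duplicating or size-increasing rules), so $\mu$ need not decrease on $M$-steps either.

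For comparison, the paper does not reduce from termination at all. It reuses the structure built in Theorem~\ref{thm:undecidable} from a finitely presented monoid with unsolvable word problem, notes that its term-level theory is empty and hence unification is decidable, and then appeals to Theorem~\ref{thm:lambek}: quasicycle-freeness of such a structure would force decidability of equality of parallel reductions, i.e.\ of the encoded word problem, so deciding quasicycle-freeness would decide an undecidable property of monoid presentations. If you want to rescue a termination-style reduction, you would need a gadget in which the paths witnessing clause (3) of the quasicycle definition (the paths $t_i \to t$ to the common target) cannot feed back into the original rewrite rules --- for instance by preventing the collapsed material from ever matching a left-hand side of $M$ --- and you would then still owe a genuine proof, not just the broken measure, that termination of $M$ transfers to the augmented structure.
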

\begin{proof}
  The discrete structure constructed in the proof of Theorem
  \ref{thm:undecidable} clearly has an empty equational theory on
  terms and so has decidable term unification. It
  follows from Theorem \ref{thm:lambek} that, were we able to
  determine whether the structure is quasicycle free, then we would
  be able to decide whether a finite monoid presentation has a
  decidable word problem. 
\end{proof}

As a particular application of Theorem \ref{thm:lambek}, any
terminating $2$-structure with an empty equational theory is Lambek
coherent. This includes, amongst others, categories with a directed
associativity  \cite{laplaza:associative}. The unification problem for
an associative binary symbol is decidable
\cite{Baader:unification}. It follows then, from Theorem
\ref{thm:lambek} that the following $2$-structures are Lambek
coherent (in each case we need only check that the $2$-structure is
terminating): 
\begin{itemize}
  \item Distributive categories with strict associativities and strict
    units \cite {laplaza:distributive}
  \item Weakly distributive categories with strict associativity and strict units
    \cite{CockettSeely:wdc}.
\end{itemize}

An example of a non-terminating $2$-structure that is Lambek-coherent
is provided by the system $\F(X) \to F(F(X))$, since this is easily
seen to be quasicycle free. 

In the following section, we continue our investigation of quasicycle
free $2$-structures and derive sufficient conditions for such a system
to be Mac Lane coherent.

\section{Mac Lane Coherence}\label{sect:maclane}

The last section was concerned with deciding whether a given pair of
parallel morphisms is equal or, equivalently, whether a given diagram
in general position commutes. In this section, we tackle the problem
of determining sufficient conditions for all such diagrams to commute.

\begin{definition}
  Let $\S$ be a $2$-structure on a discrete category $\C$. We say that
  $\S$ is \emph{Mac Lane coherent} if every pair of morphisms
  in general position in $\Fr_\C(\S)$ with the same source and target
  are equal. 
\end{definition}

Our rough goal in this section is to find a minimal set of diagrams in
general position whose commutativity implies the commutativity of all
other such diagrams in $\Fr_\C(\S)$ for some $2$-structure $\S$ on a
discrete category $\C$. To this end, we define what it means for one
subdivision to be finer than another. The idea driving idea is that we
only wish to consider those subdivisions that do not embed into a
finer subdivision.

\begin{definition}
  Let $\S$ be a $2$-structure on a discrete category $\C$, and
  $\alpha,\beta \in \Red_{\S,\C}(s,t)$ and $(S_1,\varphi),(S_2,\psi)
  \in \Sub_{\S,\C}(\alpha,\beta)$. We say that $(S_1,\varphi)$ is
  \emph{coarser} than $(S_2,\psi)$ if there is a graph embedding
  $\Lambda:S_1\to S_2$ making the following diagram
  commute. In this case, we also say that $(S_2,\psi)$ is \emph{finer}
  than $(S_1,\varphi)$ and we write $(S_1,\varphi) \preceq
  (S_2,\psi).$ 
  \[
  \xymatrix@R=-0.1pc{
    {S_1} \ar[r]^{|\cdot |} \ar[dd]_{\Lambda}& {|S_1|} \ar@/^/[dr]^{\varphi}
    \ar[dd]_{|\Lambda|}\\
    &&{\R^2}\\
    {S_2} \ar[r]_{|\cdot |} &{|S_2|} \ar@/_/[ur]_{\psi}
  }
  \]
  We define the refinement order to be the antisymmetric closure of
  $\preceq$. 
\end{definition}

We shall abuse notation slightly and henceforth write $\preceq$ for
the refinement order. It is immediate from the definitions that the
set of subdivisions of a parallel pair of morphisms forms a poset
under refinement.

\begin{definition}
  Let $\S$ be a $2$-structure on a discrete category $\C$ and
  $\alpha,\beta \in \Red_{\S,\C}(s,t)$. A \emph{maximal subdivision} of
  $\langle\alpha,\beta\rangle$ is a maximal element of
  $(\Sub_{\S,\C}(\alpha,\beta),\preceq)$. 
\end{definition}

The idea behind the definition of a maximal subdivision is that these
are precisely the ones which cannot be further subdivided. Indeed, we
have the following lemma.

\begin{lemma}\label{lem:maximalface}
  A finitely presented quasicycle free $2$-structure on a discrete
  category is Mac Lane coherent if and only if every parallel pair of
  reductions in general position admits a maximal subdivision, each
  face of which commutes. 
\end{lemma}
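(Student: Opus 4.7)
The plan is to handle the two directions separately, with the backward implication being an immediate application of Proposition \ref{prop:faces} and the forward implication combining the finiteness results already established with a short lifting argument to cover face boundaries that are not automatically in general position.

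For the backward direction, suppose every parallel pair $\alpha, \beta$ in general position admits a maximal subdivision each face of which commutes. In particular, $\langle\alpha,\beta\rangle$ admits \emph{some} subdivision with every face commuting, so Proposition \ref{prop:faces} yields $\alpha = \beta$ in $\Fr_\C(\S)$; hence $\S$ is Mac Lane coherent.

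For the forward direction, fix $\alpha, \beta$ in general position and assume $\S$ is Mac Lane coherent. By Proposition \ref{prop:quasi} together with Lemma \ref{lem:fin_sub}, finite presentation and quasicycle freeness make $\Sub_{\S,\C}(\alpha,\beta)$ finite, so the refinement poset has at least one maximal element $(S,\varphi)$. Lemma \ref{lem:cycle} combined with quasicycle freeness implies that $\Red_{\S,\C}$ is acyclic, and Theorem \ref{thm:power} then guarantees that every face of $(S,\varphi)$ is bounded by a unique parallel pair of paths $\gamma_1, \gamma_2 : u \to v$ in $\Red_{\S,\C}$.

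It remains to establish $\gamma_1 = \gamma_2$ in $\Fr_\C(\S)$ for every face. The intended argument is a lifting step: replace repeated object variables appearing in $u$ by fresh ones of $\C$ to obtain a parallel pair $\gamma_1^{\ast}, \gamma_2^{\ast} : u^{\ast} \to v^{\ast}$ in general position, apply Mac Lane coherence to conclude $\gamma_1^{\ast} = \gamma_2^{\ast}$, and then recover $\gamma_1 = \gamma_2$ by substituting the identifications back in via functoriality of $\Fr_{\bullet}(\S)$ in the discrete variable set. The principal obstacle is justifying this lift rigorously: subreductions of a reduction in general position may legitimately identify variables, so face boundaries are not themselves automatically in general position and the hypothesis cannot be invoked directly. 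The resolution is to view $\C \mapsto \Fr_\C(\S)$ as functorial in the discrete category of variables and to observe that Mac Lane coherence is stable under enlarging $\C$ — intuitive, but the single non-syntactic point in the proof.
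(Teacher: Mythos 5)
Your proof has the same skeleton as the paper's: the backward direction is exactly the appeal to Proposition \ref{prop:faces} (the paper dismisses it as trivial), and the forward direction extracts a maximal subdivision from the finiteness of $\Sub_{\S,\C}(\alpha,\beta)$ via Proposition \ref{prop:quasi} and Lemma \ref{lem:fin_sub}, then invokes Theorem \ref{thm:power} to see that each face is a parallel pair. The one place you diverge is the final step: the paper concludes directly that each face commutes ``since $\S$ is Mac Lane coherent'', whereas you interpose a lifting argument on the grounds that face boundaries need not be in general position.

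The concern you raise is legitimate, but the patch you sketch is the weak point and does not obviously go through as stated. Renaming the repeated variables of $u$ to fresh objects need not yield a parallel pair $\gamma_1^{\ast},\gamma_2^{\ast}:u^{\ast}\to v^{\ast}$ at all: the framework admits non-left-linear rules (e.g.\ $\beta(x): x\tensor x \to x$ in the example following the definition of general position), so a repetition in a face boundary may be forced by the shape of one boundary reduction but not the other, in which case there is no common lift with both reductions in general position, and Mac Lane coherence as defined still says nothing about the pair. Moreover, ``general position'' is defined by maximising $|\var(\cdot)|$ over reductions in $\Term_\C(\F,\theta_\F,\T)$, so it depends on $\C$; your claim that coherence is stable under enlarging the discrete category is therefore not a formal functoriality statement (for a one-object $\C$ every reduction is vacuously in general position, so the content of the definition shifts with $\C$) and would need its own proof. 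Note that the paper's proof makes no such lift -- it applies the coherence hypothesis to the faces as they stand -- so to reproduce the paper's argument you should simply drop the lifting step; if instead you insist on the stricter reading of ``general position'', the specialisation/lifting claim is a genuine additional lemma (and for non-left-linear theories a delicate one), not an observation that can be waved through.
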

\begin{proof}
  The direction ($\Leftarrow$) is trivial. For the other direction,
  let $\S$ be a quasicycle-free $2$-structure on a discrete category 
  $\C$. Let $\alpha,\beta \in \Red_{\S,\C}(s,t)$. Since $\S$
  is quasicycle-free, it follows from Proposition \ref{prop:quasi} and
  Lemma \ref{lem:fin_sub} that $\Sub_{\S,\C}(\alpha,\beta)$ is
  finite. Therefore, $\langle\alpha,\beta\rangle$ admits a maximal
  subdivision $(S,\varphi)$. By Theorem \ref{thm:power}, every face of
  $(S,\varphi)$ has a unique source and target. Since $\S$ is Mac Lane
  coherent, each of these faces commutes.
\end{proof}

In order to make Lemma \ref{lem:maximalface} effective, we need to
characterise those parallel pairs of morphisms that can occur as faces
of a maximal subdivision.

\begin{definition}[Zig-zag subdivision]
Let $G$ be a directed graph and $\alpha,\beta \in G(s,t)$. Suppose
that  
\begin{align*}
  \alpha &= s \stackrel{\alpha_0}{\to}a_0\stackrel{\alpha_1}{\to}\dots
  \stackrel{\alpha_{n-1}}{\to}a_{n-1}\stackrel{\alpha_n}{\to}t\\ 
  \beta &=   s \stackrel{\beta_0}{\to}b_0\stackrel{\beta_1}{\to}\dots
  \stackrel{\beta_{m-1}}{\to}b_{m-1}\stackrel{\beta_m}{\to}t
\end{align*}
and that each $\alpha_i$ and $\beta_i$ is irreducible. Let $U$ be the
forgetful functor from directed graphs to graphs that forgets the
direction of edges. A
\emph{zig-zag subdivision} of $\langle\alpha,\beta\rangle$ is a
subdivision $(S,\varphi)$ of $\langle\alpha,\beta\rangle$ such that
$U(S)$ contains a path from $U(a_i)$ to $U(b_j)$ for some pair
$(i,j)$, with $0 \le i \le n-1$ and $0 \le j \le m-1$. We call the
preimage of this path \emph{the zig-zag of $S$}.
\end{definition}

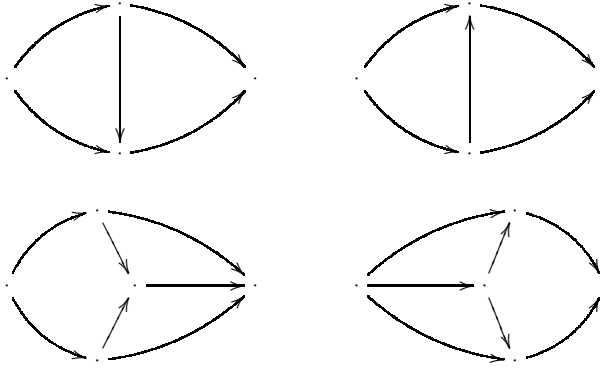
\begin{figure}[h]
  \begin{tabular}{ll}
  $
  \def\objectstyle{\scriptstyle}
  \def\labelstyle{\scriptstyle}
  \begin{xy}
    (0,0)*+{\cdot}="a";
    (15,10)*+{\cdot}="b";
    (15,-10)*+{\cdot}="c";
    (33,0)*+{\cdot}="d";
    {\ar@{->}@/_/ "c";"d"};
    {\ar@{->}@/^/ "a"; "b"};
    {\ar@{->}@/_/ "a"; "c"};
    {\ar@{->}@/^/ "b";"d"};
    {\ar@{->} "b";"c"}
  \end{xy}
  $
  & \qquad
  $
  \def\objectstyle{\scriptstyle}
  \def\labelstyle{\scriptstyle}
  \begin{xy}
    (0,0)*+{\cdot}="a";
    (15,10)*+{\cdot}="b";
    (15,-10)*+{\cdot}="c";
    (33,0)*+{\cdot}="d";
    {\ar@{->}@/_/ "c";"d"};
    {\ar@{->}@/^/ "a"; "b"};
    {\ar@{->}@/_/ "a"; "c"};
    {\ar@{->}@/^/ "b";"d"};
    {\ar@{->} "c";"b"}
  \end{xy}
  $
  \\
  \\
  $
  \def\objectstyle{\scriptstyle}
  \def\labelstyle{\scriptstyle}
  \begin{xy}
    (0,0)*+{\cdot}="a";
    (12,10)*+{\cdot}="b";
    (12,-10)*+{\cdot}="c";
    (33,0)*+{\cdot}="d";
    (17,0)*+{\cdot}="e";
    {\ar@{->}@/_/ "c";"d"};
    {\ar@{->}@/^/ "a"; "b"};
    {\ar@{->}@/_/ "a"; "c"};
    {\ar@{->}@/^/ "b";"d"};
    {\ar@{->} "e"; "d"};
    {\ar@{->} "b";"e"};
    {\ar@{->} "c";"e"};
  \end{xy}
  $
  & \qquad
  $
  \def\objectstyle{\scriptstyle}
  \def\labelstyle{\scriptstyle}
  \begin{xy}
    (0,0)*+{\cdot}="a";
    (21,10)*+{\cdot}="b";
    (21,-10)*+{\cdot}="c";
    (33,0)*+{\cdot}="d";
    (17,0)*+{\cdot}="e";
    {\ar@{->}@/_/ "c";"d"};
    {\ar@{->}@/^/ "a"; "b"};
    {\ar@{->}@/_/ "a"; "c"};
    {\ar@{->}@/^/ "b";"d"};
    {\ar@{->} "a"; "e"};
    {\ar@{->} "e";"b"};
    {\ar@{->} "e";"c"};
  \end{xy}
  $
\end{tabular}
\caption{A few zig-zag subdivisions.}\label{fig:zigzag}
\end{figure}

\begin{definition}[Diamond]
  Let $G$ be a directed graph. A pair $\alpha,\beta \in G(s,t)$ is
  called a \emph{diamond} if it does not admit a zig-zag subdivision.
\end{definition}

The idea behind the definition of a diamond is that any subdivision
containing a face that admits a zig-zag subdivision cannot be a
maximal subdivision. This is made precise in the following
proposition.

\begin{proposition}\label{prop:diamond}
  Let $G$ be an acyclic directed graph and $\alpha,\beta \in
  G(s,t)$. Every face of a maximal subdivision of
  $\langle\alpha,\beta\rangle$ is a diamond.  
\end{proposition}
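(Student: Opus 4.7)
The plan is to argue by contradiction. Suppose $(S,\varphi)$ is a maximal subdivision of $\langle\alpha,\beta\rangle$ and that some face $F$ of $(S,\varphi)$ fails to be a diamond. Since $G$ is acyclic and $S \leq G$, Theorem \ref{thm:power} ensures that $F$ has a unique source $s'$ and target $t'$, so its boundary decomposes as a parallel pair of paths $\gamma,\delta \in G(s',t')$. By assumption $\langle\gamma,\delta\rangle$ is not a diamond, so by definition it admits a zig-zag subdivision $(T,\psi)$, and by construction $T$ contains an edge (or path) not present in the bare boundary of $F$.

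From here the idea is to surgically replace the embedded face $F$ in $(S,\varphi)$ by the zig-zag data from $(T,\psi)$, producing a strictly finer subdivision. Concretely, let $D \subseteq \R^2$ be the closed disc bounded by $\varphi(|\gamma|) \cup \varphi(|\delta|)$ and let $D'$ be the closed disc bounded by $\psi(|\gamma|) \cup \psi(|\delta|)$. Both are topological discs with matching boundary data (identifying the two images of $\gamma$ and of $\delta$), so there is a homeomorphism $h: D' \to D$ restricting to the obvious identification on boundaries. Set $S' := S \cup T$ and define $\varphi': |S'| \to \R^2$ to agree with $\varphi$ on $|S|$ and with $h \circ \psi$ on $|T|$; the two definitions coincide on $|\gamma| \cup |\delta|$. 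One checks that $(S',\varphi')$ is again a pre-subdivision of $\langle\alpha,\beta\rangle$: $S'$ is still an $st$-graph from $s$ to $t$ containing $\{\alpha,\beta\}$, and every edge of $S'$ is mapped into the region bounded by $\varphi(|\alpha|)$ and $\varphi(|\beta|)$, since the extra edges from $T$ land inside $D$, which lies in this region.

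The inclusion $\Lambda: S \hookrightarrow S'$ is a graph embedding, and the commuting square required in the definition of $\preceq$ holds by construction of $\varphi'$. Thus $(S,\varphi) \preceq (S',\varphi')$. To conclude, I need $(S',\varphi') \not\sim (S,\varphi)$ in the sense of the equivalence relation defining $\Sub_G$: but the graphs $S$ and $S'$ differ (the zig-zag in $T$ contributes at least one edge not already in $S$, since $S$ restricted to $F$ consisted only of the boundary paths $\gamma$ and $\delta$), so the refinement is strict. This contradicts maximality of $(S,\varphi)$ and forces every face to be a diamond.

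The main obstacle is the topological gluing step: verifying that the homeomorphism $h$ can always be chosen so that $\varphi'$ is a genuine planar embedding (injective on $|S'|$) and that the extra edges from $T$ do not interfere with other parts of $\varphi(|S|)$ outside $F$. This is where acyclicity and Theorem \ref{thm:power} are essential — they guarantee that $F$ really is a disc bounded by a parallel pair, rather than a more complicated region, so the disc $D$ is genuinely free to accommodate the image of $T$ up to ambient isotopy. The remainder of the argument is a straightforward bookkeeping exercise in the definitions of subdivision and refinement.
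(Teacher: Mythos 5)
There is a genuine gap, and it sits exactly where the proposition's hypothesis of acyclicity has to do its work. Your strictness claim --- that $S' = S \cup T$ properly contains $S$ ``since $S$ restricted to $F$ consisted only of the boundary paths'' --- conflates the embedded picture with the abstract graph. That $F$ is a face of $(S,\varphi)$ only says that no edge of $S$ is \emph{embedded} in the open region bounded by $\varphi(|\gamma|)$ and $\varphi(|\delta|)$; it does not say that the zig-zag of $T$ is absent from $S$. The zig-zag is a path in $G$ joining an interior vertex of one boundary path to an interior vertex of the other, and all of its vertices and edges may already belong to $S$, embedded by $\varphi$ \emph{outside} the face. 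In that case $S \cup T = S$ as subgraphs of $G$, so no strictly finer subdivision has been produced, and your map $\varphi'$ is not even well defined: it would have to send the zig-zag edges both to their old images outside $F$ (to agree with $\varphi$) and into the disc $D$ (to agree with $h\circ\psi$). A subdivision is a subgraph of $G$ with a single embedding, so an edge cannot be duplicated, and no choice of the homeomorphism $h$ repairs this; your closing paragraph attributes the injectivity worry to the topology of the face, but the obstruction is graph-theoretic and is not settled by Theorem \ref{thm:power}.

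The paper's proof treats precisely these two cases. When the zig-zag is not already in $S$, it can be adjoined inside the face --- this is essentially your surgery, and it is what forces the zig-zag to be contained in $S$ by maximality. The remaining case, zig-zag contained in $S$ and hence embedded outside the face in one of two configurations, is eliminated by a separate argument that you omit: since $S$ is an st-graph, there is a path in $S$ from $s$ to the face's source (dually, from the face's target to $t$); planarity forces that path to pass through a vertex of the zig-zag or of one of the face's boundary paths, and in either situation one assembles a directed cycle in $G$, contradicting acyclicity. Without this second half your argument uses acyclicity only through Theorem \ref{thm:power} and does not prove the proposition; adding it is what completes the proof.
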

\begin{proof}
  Let $G$ be an acyclic directed graph and let $(S,\varphi)$ be a maximal 
  subdivision of $\alpha,\beta \in G(s,t)$. By Theorem
  \ref{thm:power}, every face of $S$ has a unique source and
  target. That is, every face consists of a parallel pair of
  reductions $\eta,\psi:u \to v$. Suppose that $\langle
 \eta,\psi\rangle$ is a face of $S$ that is not a diamond. That
  is, it admits a zig-zag subdivision. So, we have
  \begin{align*}
    \eta &= u \stackrel{\eta_1}{\to} w \stackrel{\eta_2}{\to}
    v\\ 
    \psi &= u \stackrel{\psi_1}{\to} x \stackrel{\psi_2}{\to} v,
  \end{align*}
  and a zig-zag $\gamma$ between $w$ and $x$ that is a part of a
  subdivision of $\langle\eta, \psi\rangle$. By maximality, $\gamma$
  must be contained in $\S$. Since $\langle\eta, \psi\rangle$ is a
  face, $\varphi(|\gamma|)$ cannot lie in the region bounded by
  $\varphi(|\eta|)$ and $\varphi(|\psi|)$.  So, we are in one of the
  situations depicted in Figure \ref{fig:cantembed}.

  \begin{figure}[h]
    \begin{tabular}{ll}
      $
      \def\objectstyle{\scriptstyle}
      \def\labelstyle{\scriptstyle}
      \begin{xy}
        (0,0)*+{u}="u";
        (0,10)*+{w}="w";
        (0,-10)*+{x}="x";
        (15,0)*+{v}="v";
        {\ar@{-}@/^3pc/^{\gamma} "x";"w"};
        {\ar@{->}^{\eta_1} "u";"w"};
        {\ar@{->}_{\psi_1} "u"; "x"};
        {\ar@{->}@/_/_{\psi_2} "x"; "v"};
        {\ar@{->}@/^/^{\eta_2} "w";"v"};
      \end{xy}
      $

      &\qquad

      $
      \def\objectstyle{\scriptstyle}
      \def\labelstyle{\scriptstyle}
      \begin{xy}
        (0,0)*+{v}="v";
        (0,10)*+{w}="w";
        (0,-10)*+{x}="x";
        (-15,0)*+{u}="u";
        {\ar@{->}_{\psi_2} "x";"v"};
        {\ar@{->}^{\eta_2} "w";"v"};
        {\ar@{->}@/^/^{\eta_1} "u";"w"};
        {\ar@{->}@/_/_{\psi_1} "u";"x"};
        {\ar@{-}@/^3pc/^{\gamma} "w"; "x"};
      \end{xy}
      $
    \end{tabular}
    \caption{Possible embeddings of $\gamma$.}\label{fig:cantembed} 
  \end{figure}
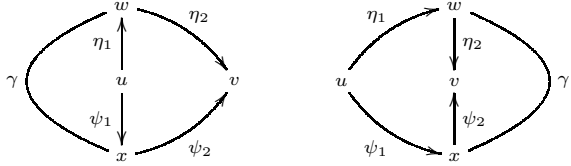
  
  Suppose that we are in the situation depicted in the left hand
  diagram of Figure \ref{fig:cantembed}. Since $\gamma$ is contained
  in $S$ and since $S$ is an st-graph, there is a path $s
  \stackrel{\delta}{\to} u$. By planarity, $\delta$ must factor
  through a vertex in $\gamma$ or $\eta_2$ or $\psi_2$. If $\delta$
  factors through a vertex in $\eta_2$ or $\psi_2$ then it is clear
  that $G$ contains a cycle, contradicting the fact that $G$ is
  acyclic. So, we must have $s \stackrel{\delta_1}{\to} z
  \stackrel{\delta_2}{\to} u$ for some vertex $z$ in
  $\gamma$. However, since $\gamma$ appears in a subdivision of
  $\langle \eta,\psi\rangle$, there is a path $u \stackrel{\zeta}{\to}
  z$ in $G$. Then, $\delta_2\cdot\zeta$ forms a cycle in $G$,
  contradicting the fact that $G$ is acyclic. So, $\gamma$ can not be
  embedded as in the left hand picture of Figure
  \ref{fig:cantembed}. Dually, it cannot be embedded as in the right
  hand picture of Figure \ref{fig:cantembed}.
  
  Therefore, $\langle\eta,\psi\rangle$ admits a zig-zag subdivision
  with zig-zag $\gamma$, contradicting the maximality of
  $(S,\varphi)$. So, $\langle \eta,\psi\rangle$ must be a diamond. 
\end{proof}

Combining Lemma \ref{lem:maximalface} and Proposition
\ref{prop:diamond}, we obtain our general Mac Lane Coherence theorem.

\begin{theorem}\label{thm:maclane}[Mac Lane Coherence]
  A finitely presented quasicycle-free structure $\S$ on a discrete
  category $\C$ is Mac Lane coherent if and only if every diamond in
  $\Red_{\S,\C}$ commutes in $\Fr_{\C}(\S)$. \qed
\end{theorem}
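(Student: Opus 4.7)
The plan is to establish the two directions of the biconditional separately; the backward direction is the substantive one and assembles the three main technical results of the section, while the forward direction follows by generalisation and substitution.

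For the backward direction ($\Leftarrow$), I would assume every diamond in $\Red_{\S,\C}$ commutes in $\Fr_\C(\S)$, and let $\alpha, \beta$ be a parallel pair in general position in $\Fr_\C(\S)(s,t)$. Since $\S$ is finitely presented and quasicycle-free, Proposition \ref{prop:quasi} yields that $\Red_{\S,\C}$ is locally finite, and Lemma \ref{lem:fin_sub} then gives $|\Sub_{\S,\C}(\alpha,\beta)| < \infty$. The refinement poset $(\Sub_{\S,\C}(\alpha,\beta), \preceq)$ therefore admits at least one maximal element $(S,\varphi)$. By Proposition \ref{prop:diamond}, every face of $(S,\varphi)$ is a diamond, and by hypothesis each such face commutes in $\Fr_\C(\S)$. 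An application of Lemma \ref{lem:maximalface} then delivers that $\S$ is Mac Lane coherent.

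For the forward direction ($\Rightarrow$), I would assume $\S$ is Mac Lane coherent and let $\alpha, \beta$ be a diamond in $\Red_{\S,\C}(s,t)$. I would lift to a pair $\tilde\alpha, \tilde\beta$ of the same shape in general position by replacing each $\C$-object occurring at a variable position of $\Shape(\alpha)$ and $\Shape(\beta)$ with a fresh distinct variable, working in $\Fr_{\widetilde{\C}}(\S)$ for an appropriately enlarged discrete category $\widetilde{\C}$. Since substitution induces a morphism of reduction graphs, any zig-zag subdivision of $\langle\tilde\alpha,\tilde\beta\rangle$ would specialise back to one of $\langle\alpha,\beta\rangle$, so the lift is again a diamond; but this fact is not strictly needed. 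By Mac Lane coherence, $\tilde\alpha = \tilde\beta$ in $\Fr_{\widetilde{\C}}(\S)$, and this equation descends along the substitution homomorphism specialising the fresh variables back to the original $\C$-objects, giving $\alpha = \beta$ in $\Fr_\C(\S)$.

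The main obstacle, such as it is, lies in securing a maximal element of the refinement poset in the backward direction: this requires the joint use of finite presentation, quasicycle-freeness, and the finiteness machinery of Section \ref{sect:lambek}. Once that maximal subdivision is in hand, the conclusion is immediate from Proposition \ref{prop:diamond} and Lemma \ref{lem:maximalface}. The forward direction is essentially a routine substitution argument resting on the functoriality of the construction $\Fr_{-}(\S)$ in the base category, which is implicit in the framework developed in Section \ref{sect:structures}.
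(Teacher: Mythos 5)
Your backward direction is exactly the paper's argument: the paper proves the theorem precisely by combining Proposition~\ref{prop:quasi} and Lemma~\ref{lem:fin_sub} (finiteness, hence existence of a maximal subdivision), Proposition~\ref{prop:diamond} (its faces are diamonds), and Lemma~\ref{lem:maximalface}; that is the only substantive content the paper supplies, and it is also the only direction used in the applications of Section~\ref{sect:imc}.

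Your forward direction, however, has a genuine gap. Mac Lane coherence is a property of $\S$ \emph{on the given discrete category} $\C$, so the hypothesis entitles you to conclude equality only of general-position parallel pairs in $\Fr_\C(\S)$; you instead invoke it in $\Fr_{\widetilde{\C}}(\S)$ for an enlarged $\widetilde{\C}$, which is a strictly stronger statement (enlarging the object set both enlarges the class of parallel pairs and makes ``general position'' more demanding), and nothing in the paper transfers coherence upward from $\C$ to $\widetilde{\C}$. Independently, the lift itself is not as routine as described: replacing every variable position of $\Shape(\alpha)$ and $\Shape(\beta)$ by \emph{fresh, pairwise distinct} objects generally destroys well-formedness --- composability within each path and the requirement that $\tilde\alpha$ and $\tilde\beta$ share a source and target impose identifications among those occurrences (and non-left-linear rules such as $I(x)\tensor J(x)\to H(x)$ in Example~\ref{ex:disjoint} force further repetitions), so what you need is a most general \emph{parallel} instance of the pair, and you must then argue that each component of that instance is in general position, which is not automatic. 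A repair staying within the paper's framework would handle general-position diamonds directly by the definition of Mac Lane coherence, and treat the remaining diamonds by a specialisation argument carried out inside $\Fr_\C(\S)$ itself, using that any map on $\Ob(\C)$ induces an endofunctor of $\Fr_\C(\S)$ (the paper is silent on this point, marking the theorem as immediate from Lemma~\ref{lem:maximalface} and Proposition~\ref{prop:diamond}).
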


Theorem \ref{thm:maclane} says that in order to show that a
$2$-structure on a discrete category is Mac Lane coherent, we need to
do two things:
\begin{enumerate}
  \item Show that $\Fr_\C(\S)$ is quasicycle-free.
  \item Show that every diamond commutes.
\end{enumerate}

At the onset, showing that every diamond commutes can be a daunting
task. We can guide our investigations by  exploiting some
standard term rewriting theory \cite{terese:firstorder}. 

\begin{definition}
Let $\S$ be a $2$-structure on a category $\C$ and let $\varphi_1$ and
$\varphi_2$ be irreducible morphisms in $\Fr_\C(\S)$. We call
$\langle\varphi,\psi\rangle$ the \emph{initial span} in a diagram of the
following form:
\[
    \def\objectstyle{\scriptstyle}
    \def\labelstyle{\scriptstyle}
    \vcenter{
      \xymatrix{
        {\cdot} \ar[r]^{\varphi_1} \ar[d]_{\psi_1} &  {\cdot} \ar[d]^{\varphi_2} \\
        {\cdot} \ar[r]_{\psi_2} & {\cdot},
      }
    }
\]
\end{definition}

If $\varphi_1$ and $\psi_1$ are irreducible, then there are three
possibilities for a diamond with initial span $\langle \varphi,
\psi\rangle$:
\begin{enumerate}
  \item $\varphi$ and $\psi$ rewrite disjoint subterms. 
  \item $\varphi$ and $\psi$ rewrite nested subterms.
  \item $\varphi$ and $\psi$ rewrite overlapping
    subterms. Without loss of generality, we may assume that
    $\langle\varphi,\psi\rangle$ forms a critical peak. 
\end{enumerate}

By analogy with the critical pairs lemma \cite{terese:firstorder}, one
may hope to reduce the problem to only examining diamonds whose
initial span is a critical pair. Unfortunately, as the following two
examples show, there may be more than one diamond whose initial span
performs a given pair of nested or disjoint rewrites.

\begin{example}\label{ex:nested}
In this example we construct a terminating $2$-structure that has more
than one diamond with the same initial span performing a nested
rewrite. Let $\S$ be the $2$-structure consisting of unary functor 
symbols $I,J$ and $H$, together with the following reduction rules:
\begin{align*}
  I(x) &\to J(x)\\
  I(J(x)) &\to H(x)\\
  J(I(x)) &\to H(x)\\
\end{align*}
Let $\C$ be the discrete category generated by $\{A\}$ Then,
$\Fr_\C(\S)$ contains the following diagram:
\[
\vcenter{
  \def\objectstyle{\scriptstyle}
  \def\labelstyle{\scriptstyle}
  \begin{xy}
    (0,0)*+{I(I(A))}="a";
    (12,10)*+{J(I(A))}="b";
    (12,-10)*+{I(J(A))}="c";
    (40,0)*+{H(A)}="d";
    (24,0)*+{J(J(A))}="e";
    {\ar@/^0.2pc/ "b";"e"};
    {\ar@/_0.2pc/ "c";"e"};
    {\ar@{->}@/_/ "c";"d"};
    {\ar@{->}@/^/ "a"; "b"};
    {\ar@{->}@/_/ "a"; "c"};
    {\ar@{->}@/^/ "b";"d"};
  \end{xy}
}
\]
 Since there is no reduction $J(J(A)) \to H(A)$, both parallel
 reductions form diamonds. 
\end{example}

\begin{example}\label{ex:disjoint}
  In this example we construct a terminating $2$-structure that has more
  than one diamond with the same initial span performing a disjoint
  rewrite. Let $\S$ be the $2$-structure consisting of unary functor 
  symbols $I$ and $J$, the binary functor symbol $\tensor$ and the
  following reduction rules: 
  \begin{align*}
    I(x) &\to J(x)\\
    J(x)\tensor I(x) &\to H(x)\\
    I(x)\tensor J(x) &\to H(x)\\
  \end{align*}
  Let $\C$ be the discrete category generated by $\{A\}$ Then,
  $\Fr_\C(\S)$ contains the following diagram:
  \[
  \vcenter{
    \def\objectstyle{\scriptstyle}
    \def\labelstyle{\scriptstyle}
    \begin{xy}
      (0,0)*+{I(A)\tensor I(A)}="a";
      (12,10)*+{I(A) \tensor J(A)}="b";
      (12,-10)*+{J(A) \tensor I(A)}="c";
      (40,0)*+{H(A)}="d";
      (24,0)*+{J(A) \tensor J(A)}="e";
      {\ar@/^0.2pc/ "b";"e"};
      {\ar@/_0.2pc/ "c";"e"};
      {\ar@{->}@/_/ "c";"d"};
      {\ar@{->}@/^/ "a"; "b"};
      {\ar@{->}@/_/ "a"; "c"};
      {\ar@{->}@/^/ "b";"d"};
    \end{xy}
  }
  \]
   Since there is no reduction $J(A)\tensor J(A) \to H(A)$, both parallel
   reductions form diamonds. 
\end{example}

Examples \ref{ex:nested} and \ref{ex:disjoint} serve to warn us that
the collection of diamonds behaves a lot more subtly than the
collection of spans, which are the typical objects of study in
traditional term rewriting theory. Before illustrating the next subtle
point about quasicycle-free $2$-structures, we seperate those that are
inherently infinite from those that are inherently finite.

\begin{definition}[(Finitely) coherently axiomatisable]
  Let $\mathcal{R} := \langle\F,\theta_\F,\T\rangle$ be a term
  rewriting theory. We say that $\mathcal{R}$ is \emph{coherently
    axiomatisable} if there is a set of equations, $\theta_\T$,
  between reductions having the same source and target such that
  $\langle\tuple\rangle$ is a Mac Lane coherent $2$-structure. We say
  that $\mathcal{R}$ is \emph{finitely coherently axiomatisable} if it
  is finitely presented and there is a finite such $\theta_\T$.
\end{definition}

Theorem \ref{thm:maclane} immediately yields the following:

\begin{theorem}\label{thm:ca}
  A quasicycle-free $2$-structure is coherently axiomatisable.
\end{theorem}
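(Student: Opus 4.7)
The plan is a one-step application of the Mac Lane Coherence Theorem (Theorem \ref{thm:maclane}). Given a quasicycle-free $2$-structure $\S = \langle\tuple\rangle$ on a discrete category $\C$, I would define an augmented equational theory $\theta_\T'$ by adjoining to $\theta_\T$ the equation $\alpha = \beta$ for every diamond $\langle\alpha,\beta\rangle$ in $\Red_{\S,\C}$. Since diamonds are by definition parallel pairs of reductions with matching source and target, this respects the consistency condition required of a $2$-structure, and so $\S' := \langle \F, \theta_\F, \T, \theta_\T' \rangle$ is a bona fide $2$-structure refining $\S$.

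The next step is to argue that $\S'$ inherits quasicycle-freeness from $\S$: the transition $\S \leadsto \S'$ only identifies morphisms in $\Fr_\C(\S)$, so $\Red_{\S',\C}$ is essentially a quotient of $\Red_{\S,\C}$, and any quasicycle in the former pulls back to a quasicycle in the latter with cofinitely many identity reductions. By construction, every diamond of $\Red_{\S',\C}$ is the image of a diamond of $\Red_{\S,\C}$, and hence commutes in $\Fr_\C(\S')$. Theorem \ref{thm:maclane} then delivers Mac Lane coherence of $\S'$, which is precisely the existence of a $\theta_\T'$ coherently axiomatising the underlying term rewriting theory of $\S$.

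The main obstacle is making rigorous the passage $\S \leadsto \S'$ at the level of reduction graphs. Concretely, one must establish that irreducibility in $\Fr_\C(\S')$ is closely enough tied to irreducibility in $\Fr_\C(\S)$ that edges of $\Red_{\S',\C}$ descend from edges of $\Red_{\S,\C}$, and that diamonds behave well under the quotient -- so a diamond in the new reduction graph is the image of a diamond in the old one (rather than, say, a pair that admitted a zig-zag subdivision before but no longer does after further identifications). A secondary concern is that Theorem \ref{thm:maclane} was stated with a finite-presentation hypothesis; if the ambient $2$-structure is not finitely presented, one needs to check either that the proof of that theorem extends (using a Zorn-style argument to secure the existence of maximal subdivisions in place of the finite-subdivisions argument), or that only the finitely presented case of Theorem \ref{thm:ca} is genuinely being asserted here.
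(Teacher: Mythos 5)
Your proposal is essentially the paper's own proof: the paper's argument is literally ``add all diamonds as axioms and apply Theorem \ref{thm:maclane}'', which is the construction of your $\theta_\T'$. The extra worries you raise (descent of diamonds to the augmented structure, and the finite-presentation hypothesis in Theorem \ref{thm:maclane} versus the unrestricted statement here) are legitimate points of rigour that the paper itself does not address, but your core route is the same as the author's.
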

\begin{proof}
  Add all diamonds as axioms and apply Theorem \ref{thm:maclane}.
\end{proof}

Since quasicycle-freeness was enough to guarantee only finitely many
subdivisions of a given parallel pair, one may hope that every
finitely presented quasicycle-free $2$-structure is finitely
coherently axiomatisable. Sadly, this is not the case.

\begin{proposition}\label{prop:nfca}
  There exist finitely presented coherently axiomatisable
   $2$-structures that are not finitely coherently axiomatisable. 
\end{proposition}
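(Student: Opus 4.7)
The strategy is to exhibit a concrete finitely presented quasicycle-free $2$-structure $\S$ on a discrete category $\C$ that admits an infinite, ``independent'' family of diamonds. By Theorem \ref{thm:ca}, any quasicycle-free $2$-structure is coherently axiomatisable, so the real content of the proposition is to engineer $\S$ so that no finite $\theta_\T$ suffices to make it Mac Lane coherent.

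For the construction I would take a small finitely presented term rewriting theory --- for example a single unary function symbol $F$ (possibly augmented by one binary symbol) together with a few carefully chosen reduction rules --- on a one-object discrete category $\C = \{A\}$. The rules would be arranged so that for each $n \ge 1$, the reduction graph $\Red_{\S, \C}$ contains a parallel pair $\langle\varphi_n,\psi_n\rangle : s_n \to t_n$ whose legs are irreducible and share no common intermediate vertex --- i.e.\ form a diamond --- and so that the ``complexity'' of the pair (say, the number of applications of a distinguished reduction rule) grows unboundedly with $n$. Quasicycle-freeness is then established by exhibiting a weight function on terms that strictly decreases along every non-identity reduction, rendering the system terminating and hence quasicycle-free.

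The heart of the argument is to show that no finite set $\theta_\T$ of equations can identify all the pairs $\langle\varphi_n,\psi_n\rangle$ in the quotient $\Fr_\C(\S)$. I would construct an invariant $\mu : \Term_\C(\F,\theta_\F,\T) \to \mathbb{N}$ --- for instance a weighted count of occurrences of a distinguished reduction rule at certain syntactic positions --- with the following two properties. First, $\mu$ is automatically preserved by the six rules generating $\widehat{\theta_\T}$ in the absence of any inheritance step, so only the explicit base equations of $\theta_\T$ can cause $\mu$ to shift, and each such equation shifts $\mu$ by a bounded amount depending only on that equation. Second, $\mu(\varphi_n) - \mu(\psi_n)$ takes infinitely many distinct values as $n$ varies. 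Together these force that any finite $\theta_\T$ can equate $\varphi_n$ with $\psi_n$ only for finitely many $n$, so Mac Lane coherence must fail for $\S$ under any finite axiomatisation.

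The main obstacle is designing the invariant $\mu$ to thread the needle: it must be well-defined modulo the mandatory axioms of functoriality, naturality, identity, symmetry, replacement and transitivity --- all of which are built into $\widehat{\theta_\T}$ regardless of what $\theta_\T$ is chosen --- while still distinguishing the two legs of every diamond $\langle\varphi_n,\psi_n\rangle$. This constraint effectively dictates the shape of $\S$: the reduction rules must be arranged so that no reshuffling via naturality or functoriality can secretly equate the two sides of a given diamond, forcing each $\langle\varphi_n,\psi_n\rangle$ to require a fresh axiom of its own in $\theta_\T$.
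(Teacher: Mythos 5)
Your high-level plan coincides with the paper's: get coherent axiomatisability from Theorem \ref{thm:ca}, then defeat every finite $\theta_\T$ by exhibiting a finitely presented quasicycle-free structure carrying an infinite family of ``independent'' parallel pairs. But as written the proposal has two genuine gaps. First, the example is never constructed: ``a few carefully chosen reduction rules'' is precisely the content of the proposition, and every verification (quasicycle-freeness, the existence of the infinite family, its independence) is deferred to this unspecified choice. The paper does the work concretely: unary symbols $F,G,I,H$ with rules $I(x)\to G(I(x))$, $I(x)\to F(I(x))$, $F(x)\to F(F(x))$, $G(x)\to G(G(x))$, $F(x)\to H(x)$, $G(x)\to H(x)$ on the one-object discrete category, giving for each $i$ a parallel pair $I(A)\rightrightarrows H^i(I(A))$ (through $G$'s on one side, $F$'s on the other), with independence argued from the absence of reductions $H^i(\cdot)\to H^j(\cdot)$. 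Note also that this example is \emph{not} terminating, only quasicycle-free; your plan to certify quasicycle-freeness by a strictly decreasing weight is not wrong, but it constrains the construction in a way the paper's mechanism (the expanding rules are what generate the infinite family from a finite presentation) does not allow.

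Second, and more seriously, the bounded-shift invariant argument does not prove what you need. A derivation of $\varphi_n=\psi_n$ in $\widehat{\theta_\T}$ may invoke a single axiom of $\theta_\T$ arbitrarily many times, in contexts (via the Structure and Replacement rules) and along composites (via Transitivity), so a bound on the $\mu$-shift \emph{per axiom} gives no bound on the shift across a derivation, and nothing prevents a fixed finite $\theta_\T$ from equating pairs whose $\mu$-difference grows without bound. This is not a technical quibble: naturality and functoriality genuinely propagate low-level equalities upward. For instance, in the ladder above, writing $\varphi_1,\psi_1:I(A)\to H(I(A))$ for the two legs at level one, one checks using (Funct) and (Nat) that $\varphi_1\cdot H(\varphi_1)$ and $\psi_1\cdot H(\psi_1)$ are exactly the level-two legs, so a single axiom $\varphi_1=\psi_1$ already forces the level-two (and inductively higher) identifications --- exactly the phenomenon your invariant would have to exclude, and exactly where any proof of this proposition must argue structurally about which equalities the mandatory congruence rules can generate from finitely many axioms (for example via Proposition \ref{prop:faces}, by controlling which faces can occur in subdivisions of the putative bad pairs, as the paper's appeal to the missing reductions $H^i\to H^j$ attempts to do). Your closing paragraph acknowledges this needle-threading but supplies neither the invariant nor the rewriting system, so what you have is a programme rather than a proof.
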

\begin{proof}
  Let $\S$ be the $2$-structure containing unary functor symbols
  $F,G,I$ and $H$, together with the following reduction rules:
  \begin{align*}
    I(x) &\to G(I(x))\\
    I(x) &\to F(I(x))\\
    F(x) &\to F(F(x))\\
    G(x) &\to G(G(x))\\
    F(x) &\to H(x)\\
    G(x) &\to H(x)
  \end{align*}
  Let $\C$ be the discrete category generated by $\{A\}$. It is clear
  that $\Fr_\C(\S)$ is quasicycle-free, so taking all diamonds as
  axioms, Theorem \ref{thm:maclane} implies that $\S$ is coherently
  axiomatisable. However, $\Fr_\C(\S)$ contains the following diagram: 
  \[
  \xymatrix@-0.2pc{
    & {G(I(A))} \ar[r]\ar[d] &{G^2(I(A))} \ar[r]\ar[d] & {G^3(I(A))}
    \ar[r]\ar[d]& {\dots}\\ 
    I(A) \ar@/^/[ur] \ar@/_/[dr] & {H(I(A))} & {H^2(I(A))} & {H^3(I(A))} &
    {\dots}\\ 
    & F(I(A)) \ar[r]\ar[u] & {F^2(I(A))}\ar[r]\ar[u] & {F^3(I(A))}
    \ar[r]\ar[u] & 
    {\dots} 
  }
  \]
  Since there are no reductions $H^i(A) \to H^j(A)$ for $i\ne j$, no
  finite collection of diamonds with source $I(A)$ implies the
  commutativity   of all others. So, $\S$ is not finitely coherently
  axiomatisable. 
\end{proof}

In this section, we have derived a very general Mac Lane coherence
theorem and used it to illuminate some of the many subtleties of
coherence problems for covariant structures. In the following section,
we apply this theory to a substantial coherence problem.

\section{Coherence for Iterated Monoidal Categories}\label{sect:imc}

Iterated monoidal categories were introduced in \cite{iterated}, in order
to make precise the intuition that the category of monoids internal to
a category corresponds to the space of loops internal to a topological
space. Iterating the construction of internal monoids, one arrives at
the concept of an $n$-fold monoidal category. The basic structure of
\cite{iterated} is to unpack the definition in terms of internal
monoids in order to obtain a categorical operad characterising $n$-fold
monoidal categories and to subsequently derive a weak homotopy
equivalence between the nerve of this operad and the little $n$-cubes
operad. 

The presentation of the operadic theory for iterated monoidal
categories in \cite{iterated} utilises strict associativity and unit
maps. Thus, there is a nontrivial congruence present at both the
object level and the structure level. This two-level structure leads
to a subtle interplay between the object-level equational theory and
the reductions. The coherence problem is further compounded by the
fact that $n$-fold monoidal categories do not have unique normal
forms. A coherence theorem is obtained in \cite{iterated},
which says that there is a unique map in an $n$-fold monoidal category
between two terms without repeated variables. The proof proceeds via
an intricate double induction on the number of variables and the
dimension of the outermost tensor product in the target of a
morphism. In this section, we exploit Theorem \ref{thm:maclane} to provide a
new, conceptual proof of the coherence theorem for iterated monoidal
categories.

\begin{definition}
  The $2$-structure for $n$-fold monoidal categories is denoted $\M_n$
  and consists of the following.
  \begin{enumerate}
    \item $n$ binary functor symbols:
      $$\tensor_1,\dots,\tensor_n: \C\times\C\to\C$$
    \item A nullary functor symbol $I$
    \item For $1 \le i \le n$:
          \begin{align*}
            A \tensor_i(B\tensor_iC) &= (A\tensor_i B)\tensor_i C\\
            A \tensor_i I = A\\
            I \tensor_I A = A\\   
          \end{align*}
    \item For each pair $(i,j)$ such that $1\le i < j \le n$, there is
      a reduction rule, called interchange:
      $$\eta^{ij}_{A,B,C,D}: (A\tensor_jB)\tensor_i(C\tensor_jD)\to
      (A\tensor_i C)\tensor_j (B\tensor_iD)$$
  \end{enumerate}
  The interchange rules are subject to the following
  conditions: 
  \begin{enumerate}
    \item Internal unit Condition: $\eta^{ij}_{A,B,I,I} =
      \eta^{ij}_{I,I,A,B} = id_{A\tensor_jB}$.
    \item External unit condition: $\eta^{ij}_{A,I,B,I} =
      \eta^{ij}_{I,A,I,B} = id_{A\tensor_i B}$.
    \item Internal associativity condition. The following diagram commutes:
      \[
      \def\objectstyle{\scriptstyle}
      \def\labelstyle{\scriptstyle}
      \vcenter{
        \xymatrix{
          {(A\tensor_jB)\tensor_i(C\tensor_jD)\tensor_i(E\tensor_jF)}
          \ar[rrr]^{\eta^{ij}_{A,B,C,D}\tensor_i id_{E\tensor_iF}}
          \ar[dd]^{id_{A\tensor_jB}\tensor_i\eta^{ij}_{C,D,E,F}}
          &&& 
          {((A\tensor_iC)\tensor_j(B\tensor_iD))\tensor_i(E\tensor_jF)}
          \ar[dd]^{\eta^{ij}_{A\tensor_iC,B\tensor_iD,E,F}}
          \\\\
          {(A\tensor_jB)\tensor_i((C\tensor_iE)\tensor_j(D\tensor_iF))} 
          \ar[rrr]_{\eta^{ij}_{A,B,C\tensor_iE,D\tensor_iF}}
          &&& 
          {(A\tensor_iC\tensor_iE)\tensor_j(B\tensor_iD\tensor_iF)}
        }
      }
      \]
    \item External associativity condition. The following diagram
      commutes:
      \[
      \def\objectstyle{\scriptstyle}
      \def\labelstyle{\scriptstyle}
      \vcenter{
        \xymatrix{
          {(A\tensor_jB\tensor_jC)\tensor_i(D\tensor_jE\tensor_jF)}
          \ar[rrr]^{\eta^{ij}_{A\tensor_jB,C,D\tensor_jE,F}}
          \ar[dd]^{\eta^{ij}_{A,B\tensor_jC,D,E\tensor_jF}}
          &&&
          {((A\tensor_jB)\tensor_i(D\tensor_jC))\tensor_j(C\tensor_iF)}
          \ar[dd]^{\eta^{ij}_{A,B,D,C}\tensor_jid_{C\tensor_iF}}
          \\\\
          {(A\tensor_iD)\tensor_j((B\tensor_jC)\tensor_i(E\tensor_jF))}
          \ar[rrr]^{id_{A\tensor_iD}\tensor_j\eta^{ij}_{B,C,E,F}}
          &&&
          {(A\tensor_iD)\tensor_j(B\tensor_iE)\tensor_j(C\tensor_iF)}
        }
      }
      \]
    \item Giant hexagon condition. The following diagram commutes:
      \[
      \def\objectstyle{\scriptstyle}
      \def\labelstyle{\scriptstyle}
      \vcenter{
        \begin{xy}
           (35,65)*+{((A\tensor_kB)\tensor_j(C\tensor_kD))\tensor_i
             ((E\tensor_kF)\tensor_j(G\tensor_kH))}="a";
          (0,45)*+{((A\tensor_jC)\tensor_k(B\tensor_jD))\tensor_i
            ((E\tensor_jG)\tensor_k(F\tensor_jH))}="b";
          (70,45)*+{((A\tensor_kB)\tensor_i(E\tensor_kF))\tensor_j
            ((C\tensor_kD)\tensor_i(G\tensor_kH))}="c";
          (0,20)*+{((A\tensor_jC)\tensor_i(E\tensor_jG))\tensor_k
            ((B\tensor_jD)\tensor_i(F\tensor_jH))}="d";
          (70,20)*+{((A\tensor_iE)\tensor_k(B\tensor_iF))\tensor_j
            ((C\tensor_iG)\tensor_k(D\tensor_iH))}="e";
          (35,0)*+{((A\tensor_iE)\tensor_j(C\tensor_iG))\tensor_k
            ((B\tensor_iF)\tensor_j(D\tensor_iK)}="f";
          {\ar@{->}^{\eta^{ik}\tensor_j\eta^{ik}} "c";"e"}
          {\ar@{->}@/_/_{\eta^{ij}\tensor_k\eta^{ij}} "d";"f"}
          {\ar@{->}@/^/^{\eta^{ij}} "a";"c"}
          {\ar@{->}@/_/_{\eta^{jk}\tensor_k\eta^{jk}} "a";"b"}
          {\ar@{->}_{\eta^{ik}} "b";"d"}
          {\ar@{->}@/^/^{\eta^{jk}} "e";"f"}
        \end{xy}
      }
      \]
      In the giant hexagon, $(i,j,k)$ is such that $1 \le i < j < k
      \le n$ and the natural transformations have the evident
      components. 
      \end{enumerate}
\end{definition} 

It is very easy to characterise those reductions in $\M_n$ that are
general position.

\begin{lemma}
  Let $\C$ be a discrete category. A reduction $s \to t$ in
  $\Fr_\C(\M_n)$ is in general position if and only if $s$ and
  $t$ contain no repeated variables. \qed
\end{lemma}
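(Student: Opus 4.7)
\medskip

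\noindent\textbf{Proof plan.} My plan is to reduce the question to an analysis of how variables flow through a reduction in $\Fr_\C(\M_n)$. The first observation I would establish, by a straightforward structural induction, is that every reduction $\alpha\colon s\to t$ has the property that $s$ and $t$ carry the same multiset of variables. This rests on the fact that every interchange $\eta^{ij}$ has source $(x_1\tensor_j x_2)\tensor_i(x_3\tensor_j x_4)$ and target $(x_1\tensor_i x_3)\tensor_j(x_2\tensor_i x_4)$, in which each of $x_1,x_2,x_3,x_4$ occurs exactly once, so the Replacement rule preserves variable multisets. The Structure and Transitivity rules clearly do as well, and the equations of $\theta_\F$ (strict associativity and strict units) also preserve variable multisets.

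With variable preservation in hand, the next step is to characterise the maximum of $|\var(\tau)|$ over reductions $\tau$ with $\Shape(\tau) = \Shape(\alpha)$. Since the shape merely records the skeleton of the reduction tree, with the leaves (i.e.\ the identities $1_A$ introduced by Inheritance) left as holes $\circ$, any such $\tau$ is obtained by labelling each leaf with an object of $\C$. The only forced identifications among these labels come from Transitivity: in $\tau_1\of\tau_2$, the target of $\tau_1$ must equal the source of $\tau_2$, and by variable preservation this forces each leaf of $\tau_1$ lying over a given position in the intermediate object to carry the same label as the corresponding leaf of $\tau_2$. This yields an equivalence relation on leaves whose classes are in bijection with variable occurrences (counted with multiplicity) in the source of any valid reduction with that shape: each such occurrence traces through the reduction, via the variable bijections given by interchanges and the evident transports through Structure and Replacement, visiting exactly one leaf per Transitivity stage. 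Hence the maximum of $|\var(\tau)|$ equals the number of variable occurrences in $s$.

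Combining these steps, $\alpha\colon s\to t$ is in general position precisely when distinct variable occurrences in $s$ receive distinct labels, which is exactly the assertion that $s$ contains no repeated variables; by variable preservation, the analogous statement for $t$ is equivalent. The main obstacle I anticipate is making the trajectory argument of the second paragraph fully rigorous modulo the object-level congruence $\theta_\F$, since positions of variables are not literally preserved by associativity and unit rearrangements. However, $\theta_\F$ does preserve the multiset of variables, so only variable occurrences (and not their precise syntactic positions) need to be tracked, and this is precisely what the induced equivalence classes on leaves encode.
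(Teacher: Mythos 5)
Your argument is sound, but note that the paper offers no proof of this lemma at all: it is asserted with a qed symbol as immediate from the definitions, so what you have written is a genuine filling-in rather than an alternative to an existing argument. The verification you give is the right one for $\M_n$: each interchange $\eta^{ij}$ is linear in its four variables, and the object-level congruence $\theta_\F$ (strict associativity and strict units, with $I$ a constant) neither duplicates, deletes nor permutes variable occurrences, so every reduction preserves the variable occurrences of its source, the leaves of a reduction term fall into classes matching those occurrences, and the maximum of $|\var(\tau)|$ over a fixed shape is the number of occurrences in the source; general position is then exactly injectivity of the labelling, i.e.\ no repeated variables, and the statement for $t$ follows from that for $s$. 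Two points deserve sharpening. First, your closing appeal to multiset preservation is slightly too weak for the trajectory step: to transport labels across a composition you need a canonical bijection between the variable occurrences of the two representatives of the intermediate congruence class, and this is available because the equations of $\theta_\F$ in $\M_n$ are linear and do not reorder variables, so the left-to-right sequence (not merely the multiset) of occurrences is a $\theta_\F$-invariant; this is precisely where a non-linear or permuting object-level theory would make the lemma fail, so it is worth stating explicitly. Second, the ``only if'' direction tacitly requires $\C$ to contain at least as many objects as $s$ has variable occurrences (for a one-object discrete $\C$ every reduction is vacuously in general position and the statement is false as literally worded); this hypothesis is implicit in the paper as well, so it is not a defect of your proof, but it should be recorded when the argument is written out.
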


Because of the fact that an $n$-fold monoidal category is strictly
associative and has a strict unit, we can derive various useful maps
via Eckmann-Hilton style arguments. Several of these maps will be of
particular use to us. In the following, we assume that $(i,j)$ is such
that $1 \le i < j \le n$. The derived maps are as follows:

\begin{enumerate}
 \item Dimension raising: $
   \xymatrix@1{A\tensor_iB \ar[r]^{\iota^{ij}_{A,B}} & A \tensor_j
     B.}$ This represents the following composition:
   \[
   \def\objectstyle{\scriptstyle}
   \def\labelstyle{\scriptstyle}
   \vcenter{
     \xymatrix@1{
       {A\tensor_iB}\ar[r]^-{=} & {(A\tensor_j
         I)\tensor_i(I\tensor_jB)}
       \ar[r]^-{\eta^{ij}} & {(A\tensor_i I)\tensor_j
         (I\tensor_i B)} \ar[r]^-{=} & {A\tensor_j B}
     }
   }
   \]
 \item Twisted dimension raising: $
   \xymatrix@1{A\tensor_iB \ar[r]^{\tau^{ij}_{A,B}} & B \tensor_j
     A.}$ This represents the following composition:
   \[
   \def\objectstyle{\scriptstyle}
   \def\labelstyle{\scriptstyle}
   \vcenter{
     \xymatrix@1{
       {A\tensor_iB}\ar[r]^-{=} & {(I\tensor_j
         A)\tensor_i(I\tensor_jB)}
       \ar[r]^-{\eta^{ij}} & {(I\tensor_i B)\tensor_j
         (A\tensor_i I)} \ar[r]^-{=} & {B\tensor_j A}
     }
   }
   \]
   \item Left weak distributivity (This name is chosen to reflect the
     connection with weakly distributive categories \cite{CockettSeely:wdc}): $
   \xymatrix@1{{A\tensor_i(B\tensor_jC)} \ar[r]^{\delta^{ij}_{A,B,C}} &
     {(A\tensor_iB)\tensor_jC}.}$ This represents the following
   composition: 
      \[
   \def\objectstyle{\scriptstyle}
   \def\labelstyle{\scriptstyle}
   \vcenter{
     \xymatrix@1{
       {A\tensor_i(B\tensor_j C)}\ar[r]^-{=} & {(A\tensor_j
         I)\tensor_i(B\tensor_jC)}
       \ar[r]^-{\eta^{ij}} & {(A\tensor_i B)\tensor_j
         (I\tensor_i C)} \ar[r]^-{=} & {(A\tensor_iB)\tensor_j C}
     }
   }
   \]
   \item Twisted left weak distributivity:$
   \xymatrix@1{{A\tensor_i(B\tensor_jC)} \ar[r]^{\tilde{\delta}^{ij}_{A,B,C}} &
     {B\tensor_j(A\tensor_iC)}.}$ This represents the following
   composition:
   \[
   \def\objectstyle{\scriptstyle}
   \def\labelstyle{\scriptstyle}
   \vcenter{
     \xymatrix@1{
       {A\tensor_i(B\tensor_j C)}\ar[r]^-{=} & {(I\tensor_j
         A)\tensor_i(B\tensor_jC)}
       \ar[r]^-{\eta^{ij}} & {(I\tensor_i B)\tensor_j
         (A\tensor_i C)} \ar[r]^-{=} & {B\tensor_j(A\tensor_iC)}
     }
   }
   \]
 \item Right weak distributivity: $
   \xymatrix@1{{(A\tensor_jB)\tensor_iC} \ar[r]^{\gamma^{ij}_{A,B,C}} &
     {A\tensor_j(B\tensor_iC)}.}$This represents the following
   composition:
   \[
   \def\objectstyle{\scriptstyle}
   \def\labelstyle{\scriptstyle}
   \vcenter{
     \xymatrix@1{
       {(A\tensor_jB)\tensor_i C}\ar[r]^-{=} & {(A\tensor_j
         B)\tensor_i(I\tensor_jC)}
       \ar[r]^-{\eta^{ij}} & {(A\tensor_i I)\tensor_j
         (B\tensor_i C)} \ar[r]^-{=} & {A\tensor_j(B\tensor_i C)}
     }
   }
   \]
    \item Twisted right weak distributivity: $
   \xymatrix@1{{(A\tensor_iB)\tensor_jC}
     \ar[r]^{\tilde{\gamma}^{ij}_{A,B,C}} & {(A\tensor_jC)\tensor_iB}.}$
   This represents the following composition:
   \[
   \def\objectstyle{\scriptstyle}
   \def\labelstyle{\scriptstyle}
   \vcenter{
     \xymatrix@1{
       {(A\tensor_iB)\tensor_j C}\ar[r]^-{=} & {(A\tensor_i
         B)\tensor_j(C\tensor_iI)}
       \ar[r]^-{\eta^{ij}} & {(A\tensor_j C)\tensor_i
         (B\tensor_j I)} \ar[r]^-{=} & {(A\tensor_jC)\tensor_i B}
     }
   }
   \]
\end{enumerate}

With the above maps, it is easy to see that iterated monoidal
categories do not have unique normal forms.

\begin{lemma}
  Let $\C$ be a discrete category. Then, $\Fr_\C(\M_n)$ is not confluent.
\end{lemma}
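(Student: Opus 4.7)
The plan is to exhibit an explicit span in $\Fr_\C(\M_n)$ that cannot be completed into a square. Assume $n \ge 2$ and fix distinct objects $A, B \in \Ob(\C)$. Applying the derived dimension-raising and twisted dimension-raising maps with $i = n-1$ and $j = n$ gives the span
\[
A \tensor_n B \;\xleftarrow{\iota^{n-1,n}_{A,B}}\; A \tensor_{n-1} B \;\xrightarrow{\tau^{n-1,n}_{A,B}}\; B \tensor_n A
\]
whose two targets are distinct in $\Term_{\Ob(\C)}(\F,\theta_\F)$: the axioms in $\theta_\F$ are only associativity and unit laws, which preserve the left-to-right sequence of non-unit leaves of any term, and the sequences $AB$ and $BA$ differ because $A \ne B$.

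The heart of the argument is that neither $A \tensor_n B$ nor $B \tensor_n A$ admits a non-identity outgoing morphism in $\Fr_\C(\M_n)$. Every basic reduction is a substitution instance of some interchange $\eta^{ij}$ with $1 \le i < j \le n$, whose LHS is $(X \tensor_j Y) \tensor_i (Z \tensor_j W)$. Since $A \tensor_n B$ has only two non-unit leaves, any $\theta_\F$-pattern match against a subterm forces at least two of $X,Y,Z,W$ to be $I$ modulo $\theta_\F$. Four of the six placements of the two $I$'s collapse to identities by the internal and external unit conditions, and the remaining two yield exactly the derived reductions $\iota^{ij}$ and $\tau^{ij}$. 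Each of these requires the matched subterm to have essential outer operation $\tensor_i$ with non-trivial content on both sides. But $A \tensor_n B$ has essential outer $\tensor_n$, and any rewriting to outer $\tensor_i$ (with $i < n$) via $\theta_\F$ amounts to wrapping with units, which pushes all content to a single side and re-triggers a unit case. The same reasoning applies to $B \tensor_n A$, so every interchange instance out of either is an identity.

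It follows that the only object reachable from $A \tensor_n B$ (resp.\ $B \tensor_n A$) in $\Red_{\M_n, \C}$ is itself, and a completion of the displayed span would force $A \tensor_n B = B \tensor_n A$, contradicting distinctness. Hence $\Fr_\C(\M_n)$ is not confluent. The main obstacle is the second-paragraph case analysis: specifically, making rigorous the claim that $\theta_\F$ cannot split a two-leaf product whose outer is the top index $\tensor_n$ into two non-trivial factors under a lower tensor. The strict inequality $i < j$ built into the interchange rule is what ultimately pins the top index as a reduction dead-end.
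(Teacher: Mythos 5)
Your proposal is correct and follows essentially the paper's own route: it exhibits the same span $A\tensor_n B \xleftarrow{\iota^{in}_{A,B}} A\tensor_i B \xrightarrow{\tau^{in}_{A,B}} B\tensor_n A$ (with $i=n-1$) and concludes non-joinability, which the paper simply asserts as ``clearly not joinable''. Your additional case analysis showing that both targets are dead ends (every interchange instance applicable to a two-leaf term with outermost $\tensor_n$ is killed by the unit conditions, since $j>i$ forbids a higher index) is a sound way to make that assertion precise, and is easily completed via unique normal forms for the object-level theory of associativity and units.
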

\begin{proof}
  The following span is clearly not joinable:
  \[
  \def\objectstyle{\scriptstyle}
  \def\labelstyle{\scriptstyle}
  \xymatrix{
    {A\tensor_i B} \ar[r]^{\iota^{in}_{A,B}} \ar[d]_{\tau^{in}_{A,B}}
    & {A \tensor_n B}\\
    {B \tensor_n A}
  }
  \]
\end{proof}

Our first step is to bring iterated monoidal categories into the realm
of applicability of Theorem \ref{thm:maclane}.

\begin{proposition}\label{prop:imcterm}
  $\M_n$ is terminating.
\end{proposition}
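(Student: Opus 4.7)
The plan is to exhibit an integer-valued weight $\mu$ on the objects of $\Fr_\C(\M_n)$ that descends to the quotient by $\theta_\F$ and is strictly decreased by every non-identity elementary application of an interchange rule. Since the axioms in $\theta_\T$ only identify reductions with matching source and target, this suffices: any infinite chain in $\Fr_\C(\M_n)$ would then yield a strictly decreasing infinite sequence in $\N$, which is impossible, so cofinitely many reductions in such a chain must be identities.

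For a term $t$, let $\mathrm{leaves}(t)$ denote the multiset of variable occurrences in $t$ (so $I$ contributes nothing), and for distinct leaves $\ell_1, \ell_2 \in \mathrm{leaves}(t)$ let $\mathrm{LCA}_t(\ell_1,\ell_2)$ be their lowest common ancestor, which is necessarily a $\tensor_k$-node; write $\mathrm{idx}$ for that index. Define
\[
\mu(t) \;=\; \sum_{\{\ell_1,\ell_2\} \subseteq \mathrm{leaves}(t)} \bigl(n - \mathrm{idx}(\mathrm{LCA}_t(\ell_1,\ell_2))\bigr) \;\in\; \N.
\]
Well-definedness modulo $\theta_\F$ is routine: strict $\tensor_k$-associativity does not alter which $\tensor_k$-node is the LCA of two leaves lying in its flat product, and a strict unit $\tensor_k I$ eliminates a subtree that carries no leaf and so does not affect the LCA of any pair of remaining variable occurrences.

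The key calculation is the effect of a single interchange $u = (A\tensor_j B)\tensor_i(C\tensor_j D) \longrightarrow (A\tensor_i C)\tensor_j(B\tensor_i D) = v$ inside an arbitrary context $E[\cdot]$. Writing $a,b,c,d$ for the numbers of variable occurrences in $A,B,C,D$ respectively, we have $|\mathrm{leaves}(u)| = |\mathrm{leaves}(v)| = a+b+c+d$, so the leaf-pairs not sitting entirely inside the redex contribute identically to $\mu(E[u])$ and $\mu(E[v])$. Direct enumeration of the remaining pairs (three external-operation indices appear in each: $\{\tensor_i,\tensor_j,\tensor_j\}$ in $u$ and $\{\tensor_j,\tensor_i,\tensor_i\}$ in $v$) yields
\[
\mu(E[u]) - \mu(E[v]) \;=\; (ad + bc)(j - i),
\]
which is strictly positive unless $ad = bc = 0$, i.e.\ unless one of $\{A,B\}=\{I,I\}$, $\{C,D\}=\{I,I\}$, $\{A,C\}=\{I,I\}$, or $\{B,D\}=\{I,I\}$. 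Each of these four cases is exactly the hypothesis of an internal or external unit axiom in $\theta_\T$, under which $\eta^{ij}_{A,B,C,D}$ is already the identity in $\Fr_\C(\M_n)$. Since, modulo $\widehat{\theta_\T}$, every reduction in $\Fr_\C(\M_n)$ decomposes as a finite composite of elementary interchange steps in context (with functoriality and naturality absorbing the Structure, Replacement and Transitivity builds), every non-identity reduction strictly decreases $\mu$ and termination follows. The main obstacle I expect is verifying that the vanishing case $(ad+bc)(j-i)=0$ really does coincide exactly with the interchange being forced to be an identity by the unit conditions of $\M_n$; the rest is bookkeeping on leaf-pairs.
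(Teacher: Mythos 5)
Your proposal is correct and follows the same overall strategy as the paper---termination via a weight on objects that every non-identity rewrite strictly decreases---but your measure is genuinely different and in some respects cleaner. The paper uses the recursive rank $\hat{\rho}(A\tensor_i B) = i + \hat{\rho}(A) + 2\hat{\rho}(B)$, which is not invariant under strict associativity and units and so has to be repaired by minimising over each $\theta_\F$-congruence class; it then checks the decrease separately for interchange and for the derived Eckmann--Hilton maps (left weak distributivity, etc.). Your pairwise weight $\sum (n - \mathrm{idx}(\mathrm{LCA}))$ is invariant under the object-level congruence by construction, and one computation gives the exact decrease $(ad+bc)(j-i)$ for an arbitrary interchange instance in context, so the dimension-raising and weak-distributivity maps need no separate treatment: they are interchange instances with some arguments congruent to $I$ and are covered by the same formula. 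Your analysis of the vanishing case is also exactly right: $(ad+bc)(j-i)=0$ forces one of the argument pairs $\{A,B\}$, $\{C,D\}$, $\{A,C\}$, $\{B,D\}$ to consist of unit-congruent terms, and these are precisely the internal and external unit conditions, under which the instance is an identity; hence non-identity steps strictly decrease the weight and any infinite chain contains cofinitely many identities, which is what the paper's definition of termination asks for. The one step you lean on---that every reduction in $\Fr_\C(\M_n)$ decomposes, modulo the (Funct), (Nat 1), (Nat 2) equations in $\theta_\T$, into elementary single-redex steps in context---is standard and is implicitly used by the paper's own proof as well, so it is not a gap, though a sentence spelling it out would make your argument fully self-contained.
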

\begin{proof}
  Let $\C$ be a discrete category. We shall construct a ranking
  function on $\Fr_\C(\M_n)$. Define $\hat{\rho}:\Term_{\Ob{\C}}(\M_n)
  \to \N$ by:
  \[
  \hat{\rho}(t) = 
  \begin{cases} 
    i + \hat{\rho}(A) + 2\hat{\rho}(B) & \text{if $t = A\tensor_iB$}\\
    0 & \text{if $t = I$}
  \end{cases}
  \]
  At the moment, $\hat{\rho}$ is not a ranking function, since it is
  sensitive to the order of parenthesisation and the presence of
  units. We can, however, use it to construct a ranking function. Let
  $[t]$ be an object in $\Fr_\C(\M_n)$. Define:
  \[
  \rho([t]) = \min \{\hat{\rho}(t') : t' \in [t]\}
  \]
  The map $\rho$ effectively calculates the rank of the member of a
  congruence class $[t]$ which has no units and the left most
  bracketing. It is immediate from the definition that, for $t_1,t_2
  \in [t]$, we have $\rho(t_1) = \rho(t_2)$. We now need to show that
  interchange reduces the rank and must be careful to check the maps
  arising from Eckmann-Hilton arguments also. Let $j > i$. 
  \begin{itemize}
    \item Interchange:
    \begin{align*}
      \rho((A\tensor_j B) \tensor_i (C \tensor_j D)) &=
      i + 2j + \rho(A) + \rho(B) + \rho(C) + \rho(D)\\
      \rho((A\tensor_i C)\tensor_j (B \tensor_iC)) &=
      2i + j + \rho(A) + \rho(B) + \rho(C) + \rho(D)
    \end{align*}
    Since $j> i$, we have $i + 2j > 2i + j$.
  \item Left linear distributivity:
    \begin{align*}
      \rho(A\tensor_i(B\tensor_j C)) &= 
      i + 2j + \rho(A) + 2\rho(B) + 4\rho(C)\\
      \rho((A\tensor_i B) \tensor_j C) &= 
      i + j + \rho(A) + 2\rho(B) + 2\rho(C)
    \end{align*}
  \end{itemize}
  The other cases are handled similarly. It follows that $\rho$ is a
  ranking function on $\Fr_\C(\M_n)$, so $\Fr_\C(\M_n)$ is terminating.
\end{proof}

It follows from Proposition \ref{prop:imcterm} and Theorem
\ref{thm:ca} that $\M_n$ is coherently axiomatisable. At this stage,
however, we don't even know whether it is finitely coherently
axiomatisable. Before examining diamonds in $\M_n$ for commutativity,
we recall some useful terminology and results from \cite{iterated}. 

Let $A$ be an object in $\Fr_\C(\M_n)$. For a set $X \subseteq \var(A)$, we
write $A - X$ to denote the object resulting from substituting
$I$ for each variable in $X$. For instance $(A\tensor_i
B)\tensor_j(C \tensor_i E) - \{B,D\} = A\tensor_j C$. We say that a
term $B$ is \emph{in} a term $A$ and write $B \in A$ if there is some
$X \subseteq \var(A)$ such that $A - X = B$. Of crucial importance to
us is the following result of \cite{iterated}.

\begin{theorem}[\cite{iterated}]\label{thm:maps}
  Let $\C$ be a discrete category and let $A$ and $B$ be objects of
  $\Fr_\C(\M_n)$. A necessary and sufficient condition for the
  existence of a map $A \to B$ in $\Fr_\C(\M)$ is that, for each $a,b
  \in \var(A)$, if $a\tensor_i b \in A$, then one of the following holds:
  \begin{itemize}
    \item There is some $j \ge i$ such that $a \tensor_j b \in B$
    \item There is some $j > i$ such that $b \tensor_j a \in B$
  \end{itemize}
\end{theorem}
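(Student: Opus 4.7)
The proof has two directions. For necessity, I would argue by induction on the derivation of a map $A \to B$ in $\Fr_\C(\M_n)$, maintaining an invariant about variable pairs. For $a,b \in \var(A)$ and a term $t$ containing both, define the \emph{signature} of $(a,b)$ in $t$ to be the unique $(j,\epsilon)$ with $\epsilon \in \{+,-\}$ such that substituting $I$ for every other variable and reducing via the object-level unit laws yields $a\tensor_j b$ (when $\epsilon=+$) or $b\tensor_j a$ (when $\epsilon=-$). The invariant is that along any reduction, a pair's signature $(i,+)$ can only move to $(j,+)$ with $j\ge i$ or to $(j,-)$ with $j>i$. Since the equations in $\theta_\F$ preserve signatures exactly, I need only check a single application of interchange $\eta^{ij}:(X\tensor_j Y)\tensor_i(U\tensor_j V) \to (X\tensor_i U)\tensor_j(Y\tensor_i V)$ for every placement of $a$ and $b$ among $X,Y,U,V$. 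A short computation shows that signatures internal to one subterm are unchanged, the cross case $(a\in X, b\in V)$ sends $(i,+)$ to $(j,+)$ with $j>i$, and the cross case $(a\in Y, b\in U)$ sends $(i,+)$ to $(j,-)$ with $j>i$; all remaining placements leave the signature fixed.

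For sufficiency, I would proceed by structural induction on $B$. When $B=I$ or $B$ is a single variable, the pair condition forces $A$ to agree with $B$ up to units, so the identity suffices. For $B=B_1\tensor_k B_2$, set $V_\ell:=\var(B_\ell)$. The pair condition applied to any $a\in V_1,\ b\in V_2$ forces $a\tensor_i b\in A$ with $i\le k$, and prohibits any cross-pair with a $(l,-)$ signature in $A$ (since $b\tensor_j a\notin B$ for such $a,b$). If I can produce a morphism $A \to A_1 \tensor_k A_2$ with $\var(A_\ell)=V_\ell$, then each $A_\ell$ inherits the pair condition relative to $B_\ell$, and by induction together with functoriality of $\tensor_k$ I obtain the desired map.

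The entire weight of the argument therefore rests on a rearrangement lemma: construct the morphism $A\to A_1\tensor_k A_2$ splitting $A$ along $V_1,V_2$. I would prove this by a sub-induction on the rank $\hat\rho$ from the proof of Proposition \ref{prop:imcterm}, using only the derived maps $\iota^{ij}$, $\tau^{ij}$, $\delta^{ij}$, $\tilde\delta^{ij}$, $\gamma^{ij}$, $\tilde\gamma^{ij}$. Case analysis on the outermost operation of $A$, say $\tensor_l$: the case $l>k$ is impossible since it would force a cross-pair of signature $(l,\pm)$ in $A$ that no signature in $B$ can dominate; the case $l=k$ with the top-level subterms already partitioned into $V_1,V_2$ is our target form (the ``wrong order'' sub-case is ruled out by the absence of $(k,-)$ cross-pairs in $A$); and the case $l<k$ is handled by either an outer dimension-raise $\iota^{lk}$ if the top subterms already separate $V_1,V_2$, or else by applying a weak distributivity to dismantle a subterm in which $V_1$- and $V_2$-variables are intertwined at depth $<k$.

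The main obstacle is this rearrangement lemma: showing that the weak distributivities always suffice to effect the required separation, that each step preserves the pair condition against $B$, and that the process terminates (which I expect to derive from $\hat\rho$ strictly decreasing, analogously to Proposition \ref{prop:imcterm}). This replaces the intricate double induction of \cite{iterated} on $|\var(A)|$ and the outermost dimension of the target with a termination-style argument, but the essential combinatorial content is the same: one must keep the bookkeeping of signatures consistent across every structural rearrangement.
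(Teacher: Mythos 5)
The paper offers no proof of this statement to compare against: it is imported verbatim from \cite{iterated}, so your attempt must stand on its own, and as written it has a genuine gap. The necessity half is fine --- your signature invariant is exactly the right observation, namely that a single interchange $\eta^{ij}$ either fixes a pair's signature or sends $(i,+)$ to $(j,+)$ or $(j,-)$ with $j>i$, and this relation is transitive and unaffected by the object-level equations. But your sufficiency bookkeeping is wrong at the first step: the claim that the pair condition ``prohibits any cross-pair with a $(l,-)$ signature in $A$'' is false. Take $A=a\tensor_i b$ and $B=b\tensor_j a$ with $j>i$, so that $B=B_1\tensor_j B_2$ with $V_1=\{b\}$, $V_2=\{a\}$: the cross-pair has negative signature $(i,-)$ in $A$, the pair condition holds, and the map exists (it is the twisted dimension-raising $\tau^{ij}$). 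What is actually forced is only that a cross-pair has signature $(i,+)$ with $i\le k$ or $(i,-)$ with $i<k$; reversals below dimension $k$ are both permitted and necessary, which is precisely why the twisted maps are in your toolkit at all.

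More seriously, the rearrangement lemma --- producing $A\to A_1\tensor_k A_2$ splitting $\var(A)$ along $(V_1,V_2)$ --- carries the entire weight of sufficiency and is asserted rather than proved, and its case analysis is already incomplete. When the outermost operation of $A$ is $\tensor_k$, a top-level factor can mix $V_1$- and $V_2$-variables: for $A=(a\tensor_i b)\tensor_k c$ with $i<k$, $V_1=\{a\}$, $V_2=\{b,c\}$ and $B=a\tensor_k(b\tensor_n c)$, $n\ge k$, the pair condition holds but neither of your two sub-cases (``already partitioned'' or ``wrong order'') applies; you must recurse into mixed factors and reassociate, and justify that the pair condition restricts correctly to them. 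In the case $l<k$, ``apply a weak distributivity to dismantle a subterm'' conceals the two delicate points: that each chosen step preserves the pair condition against $B$ (a step can overshoot, raising a pair above its dimension in $B$ or reversing a pair that must not reverse), and that the process terminates. Termination you can indeed extract from the rank of Proposition \ref{prop:imcterm}, but the preservation claim is exactly the intricate combinatorial core of the argument in \cite{iterated}; naming it as ``the main obstacle'' does not discharge it. So your outline is a plausible reorganisation of the known proof, but it is not yet a proof.
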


Theorem \ref{thm:maps} gives us the technical tool that we need in
order to show that various parallel pairs of maps are not diamonds. We
begin our analysis of the collection of diamonds of $\Fr_\C(\M_n)$
with diamonds whose initial span rewrites disjoint pieces of a term.

\begin{lemma}\label{lem:itdisjoint}
  Let $A \tensor_i B \in \Fr_\C(\M_n)$ and suppose that there are maps
  $\varphi : A \to A'$ and $\psi:B \to B'$. Then, in the following
  diagram, the square labelled (d) is a commutative diamond and there
  is a map $A'\tensor_i B' \to C$:
  \[
  \vcenter{
    \def\objectstyle{\scriptstyle}
    \def\labelstyle{\scriptstyle}
    \begin{xy}
      (0,0)*+{A\tensor B}="a";
      (12,15)*+{A \tensor B'}="b";
      (12,-15)*+{A' \tensor B}="c";
      (55,0)*+{C}="d";
      (25,0)*+{A'\tensor B'}="e";
      (12,0)*+{(d)};
      {\ar@/^0.2pc/^{\varphi \tensor_i 1_{B'}} "b";"e"};
      {\ar@/_0.2pc/_{1_{A'} \tensor_i \psi} "c";"e"};
      {\ar@{->}@/_1.5pc/_{\alpha} "c";"d"};
      {\ar@{->}@/^/^{1_A\tensor_i \psi} "a"; "b"};
      {\ar@{->}@/_/_{\varphi \tensor_i 1_B} "a"; "c"};
      {\ar@{->}@/^1.5pc/^{\beta} "b";"d"};
    \end{xy}
  }
  \]
\end{lemma}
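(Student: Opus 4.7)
The plan is to verify three separate claims: that the inner square (d) commutes in $\Fr_\C(\M_n)$, that (d) admits no zig-zag subdivision (so it is a diamond in the technical sense), and that a map $A' \tensor_i B' \to C$ exists. The first is immediate from the functoriality axiom (Funct) of the $2$-structure $\M_n$: both composites $(\varphi \tensor_i 1_{B'}) \of (1_A \tensor_i \psi)$ and $(1_{A'} \tensor_i \psi) \of (\varphi \tensor_i 1_B)$ unwind to $\varphi \tensor_i \psi$.

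For the existence of $A' \tensor_i B' \to C$, the natural tool is Theorem \ref{thm:maps}. Every morphism of $\Fr_\C(\M_n)$ preserves variables, so $\var(A' \tensor_i B') = \var(A) \cup \var(B) = \var(C)$ (the latter obtained through either $\alpha$ or $\beta$), and these unions are disjoint since we are in general position. I would then check the Theorem \ref{thm:maps} criterion for a pair $a,b$ with $a \tensor_k b$ appearing in $A' \tensor_i B'$, by cases on where the two variables live. If $a,b \in \var(A)$, then $a \tensor_k b$ already appears in $A' \tensor_i B$ and the hypothesised $\alpha$ delivers the required $\tensor_j$-relation in $C$; symmetrically for both in $\var(B)$ using $\beta$. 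In the mixed case $a \in \var(A)$, $b \in \var(B)$, the outermost tensor joining them must be the global $\tensor_i$, forcing $k=i$; the relation $a \tensor_i b \in A' \tensor_i B$ again lets us invoke $\alpha$.

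The hard part will be the diamond property of (d). I would argue by contradiction: a zig-zag subdivision would provide an undirected path in $\Red_{\M_n,\C}$ between $A \tensor_i B'$ and $A' \tensor_i B$ whose interior lies in the planar region bounded by the two composites, and thus avoids the four corners of (d). The intended strategy is to exploit the disjointness of the supports of $\varphi$ and $\psi$: any irreducible morphism incident to an object of the form $X \tensor_i Y$ either acts entirely inside $X$ (so would bridge $A \tensor_i B'$ only to another term $X' \tensor_i B'$ and could reach $A' \tensor_i B$ only through $A' \tensor_i B'$), acts entirely inside $Y$ (dually through $A \tensor_i B$), or is a structural move crossing the outermost $\tensor_i$ whose variable-preservation forces it to land back in a term decomposing over that $\tensor_i$ into an $A$-part and a $B$-part. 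The main obstacle is making this last case precise, namely showing that no finite chain of outermost-crossing structural moves can assemble into a bridge between $A \tensor_i B'$ and $A' \tensor_i B$ without passing through one of the four corners — this is the delicate combinatorial step on which the proof turns.
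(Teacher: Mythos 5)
Your treatment of two of the three claims is exactly the paper's: the square (d) commutes by functoriality, and the map $A'\tensor_i B'\to C$ is produced by the same three-way case split (both variables in $A'$, both in $B'$, one in each) using Theorem \ref{thm:maps} in both directions, first extracting relations in $C$ from the hypothesised maps $\alpha$ and $\beta$ and then invoking the sufficiency direction. That part of your proposal is correct and is precisely what the paper calls ``the tricky part''.

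The one place you diverge is the diamond property, and there you have an acknowledged gap: you set up the contradiction (a zig-zag would be an undirected path between $A\tensor_i B'$ and $A'\tensor_i B$ inside the region) but leave unproved the key claim that no chain of moves crossing the outermost $\tensor_i$ can assemble such a bridge. To be fair, the paper offers no argument here either --- it simply asserts that it ``is easy to see'' that (d) admits no zig-zag subdivision --- so your gap sits exactly where the paper is silent; but your instinct that this step is not automatic is sound, and your proposed edge-by-edge analysis of ``bridging'' morphisms is the harder way to close it. The more efficient route is to use the st-graph condition together with Theorem \ref{thm:maps} to constrain the \emph{vertices} rather than the edges: any vertex $w$ of a subdivision of (d) admits maps $A\tensor_i B\to w\to A'\tensor_i B'$, and applying Theorem \ref{thm:maps} to both forces every variable of $A$ to be related in $w$ to every variable of $B$ by $\tensor_i$ exactly, in the same order; hence $w$ decomposes (modulo $\theta_\F$) as $w_A\tensor_i w_B$ with $A\to w_A\to A'$ and $B\to w_B\to B'$, and any edge crossing the outer $\tensor_i$ (a root-level interchange, including its unit-degenerate forms) raises the outer index past $i$ and so cannot reach $A'\tensor_i B'$. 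Note also that this reduction does not finish the job by itself: if there existed an object strictly between $A$ and $A'$ (or $B$ and $B'$) in the reduction order, one could build a ladder-shaped zig-zag through it, so the irreducibility of $\varphi$ and $\psi$ together with the absence of such intermediates in $\M_n$ must be used to pin the vertices down to the four corners. Making that last point explicit is what both your sketch and the paper's ``easy to see'' leave unsaid.
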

\begin{proof}
  The square labelled (d) commutes by functoriality and it is easy to
  see that it does not admit a zig-zag subdivision, so it is a
  diamond. The tricky part is showing the existence of a map
  $A'\tensor_i B' \to C$. 

  Let $X,Y \in \var(A'\tensor_i B')$ and suppose that $X \tensor_k Y
  \in A'\tensor_i B'$. There are a few cases to 
  consider.
  \begin{itemize}
    \item If $X,Y \in A'$, then $\alpha$ implies that there is some $m
      \ge k$ such that $X \tensor_m Y \in C$ or there is some $m> k$ such
      that $Y \tensor_m X \in C$.
    \item If $X,Y \in B'$, then $\beta$ implies that there is some $m
      \ge k$ such that $X \tensor_m Y\in C$ or there is some $m> k$ such
      that $Y \tensor_m X \in C$. 
    \item If $X\in A'$ and $Y \in B'$, then $X \tensor_i Y \in A'
      \tensor B$. So, by $\alpha$, there is some $m \ge i$ such that $X
      \tensor_m Y \in C$ or there is some $m> i$ such that $Y \tensor_m X
      \in C$  
  \end{itemize}
Putting all of the above facts together, it follows from Theorem
\ref{thm:maps} that there is a map $A'\tensor_i B' \to C$.
\end{proof}

Our next port of call is diamonds whose initial span rewrites nested
subterms. For a term $A$ and a subterm $B \le A$, we write $A[B]$ to
represent this nested term. 

\begin{lemma}\label{itnested}
  Let $A[B] \in \Fr_\C(\M_n)$ and suppose that there are maps
  $\varphi : A[B] \to A'[B]$ and $\psi:B \to B'$. Then, in the following
  diagram, the square labelled (d) is a commutative diamond and there
  is a map $A'[B'] \to C$: 
  \[
  \vcenter{
    \def\objectstyle{\scriptstyle}
    \def\labelstyle{\scriptstyle}
    \begin{xy}
      (0,0)*+{A[B]}="a";
      (12,15)*+{A[B']}="b";
      (12,-15)*+{A'[B]}="c";
      (55,0)*+{C}="d";
      (25,0)*+{A'[B']}="e";
      (12,0)*+{(d)};
      {\ar@/^0.2pc/^{\varphi} "b";"e"};
      {\ar@/_0.2pc/_{A'[\psi]} "c";"e"};
      {\ar@{->}@/_1.5pc/_{\alpha} "c";"d"};
      {\ar@{->}@/^/^{A[\psi]} "a"; "b"};
      {\ar@{->}@/_/_{\varphi} "a"; "c"};
      {\ar@{->}@/^1.5pc/^{\beta} "b";"d"};
    \end{xy}
  }
  \]
\end{lemma}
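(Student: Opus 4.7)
The plan is to mirror the proof of Lemma \ref{lem:itdisjoint} exactly, adapting it to the nested situation. First I would observe that the square labelled $(d)$ commutes by naturality: the morphism $\varphi$ is defined treating the hole as a free variable, so the two horizontal arrows of $(d)$ are simultaneous specialisations of a single natural transformation $\varphi_{(-)}: A[-] \Rightarrow A'[-]$, and $(d)$ is literally the naturality square of $\varphi_{(-)}$ with respect to $\psi : B \to B'$. As in the disjoint case, $(d)$ has no internal vertices and so admits no zig-zag subdivision, whence it is a diamond.

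The real work is to exhibit a map $A'[B'] \to C$, and, exactly as in Lemma \ref{lem:itdisjoint}, I would do this by verifying the criterion of Theorem \ref{thm:maps}. Fix $X, Y \in \var(A'[B'])$ with $X \tensor_k Y \in A'[B']$; the task is to produce some $j \ge k$ with $X \tensor_j Y \in C$ or some $j > k$ with $Y \tensor_j X \in C$. Since every morphism in sight is in general position, the four terms $A[B]$, $A[B']$, $A'[B]$, $A'[B']$ share a common variable set, and $\var(B) = \var(B')$, so it makes sense to split into cases by the position of $X$ and $Y$ with respect to the hole $B'$.

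If both $X, Y \in \var(B')$, then $X \tensor_k Y$ already appears inside $B'$ and hence inside $A[B']$, so the required witness is produced by $\beta$. If neither $X$ nor $Y$ lies in $\var(B')$, then zeroing out every variable of $A'[B']$ other than $X$ and $Y$ collapses the entire $B'$-block (via the unit laws) to $I$, exhibiting $X \tensor_k Y$ as a subexpression of the outer $A'$-structure alone; the same collapse performed in $A'[B]$ again kills the whole inner block, giving $X \tensor_k Y \in A'[B]$, so the witness is supplied by $\alpha$. The delicate mixed case, in which (say) $X$ lies outside $B'$ and $Y \in \var(B') = \var(B)$, is the main obstacle: collapsing all variables of $A'[B']$ except $X$ and $Y$ reduces the $B'$-block to $Y$ by the unit laws and the outer complement of $X$ to $I$, so the integer $k$ is witnessed entirely by the outer structure of $A'$. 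Performing the identical collapse on $A'[B]$ (the variable $Y$ now living in the corresponding position of $B$) again produces $X \tensor_k Y$ with the same $k$, so $X \tensor_k Y \in A'[B]$ and $\alpha$ once more yields the required $j$.

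Combining the three cases, Theorem \ref{thm:maps} produces the desired map $A'[B'] \to C$, completing the proof. The essential content of the mixed case, which is the only new ingredient beyond the proof of Lemma \ref{lem:itdisjoint}, is the observation that the interaction level between an outer variable and an inner variable is read off from the outer context alone and is therefore stable under substitution of $B$ for $B'$; everything else is case-checking against Theorem \ref{thm:maps}.
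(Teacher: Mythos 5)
Your proposal is correct and follows essentially the same route as the paper, whose entire proof is that the square $(d)$ commutes by naturality and that the rest is ``similar to'' Lemma \ref{lem:itdisjoint}; your case analysis via Theorem \ref{thm:maps} is exactly the intended adaptation, with the mixed case spelled out in more detail than the paper gives.
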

\begin{proof}
  The square labelled (d) commutes by naturality. The rest of the
  proof is similar to that of Lemma \ref{lem:itdisjoint}.
\end{proof}

We now know that the only nontrivial diamonds in $\Fr_C(\M_n)$ have a
critical pair as their initial span. Our remaining task is to perform
a critical pairs analysis on $\M_n$.

\subsection{Interchange + associativity}

Let $j > i$. The first way in which interchange and associativity can
interact is in the term $X \tensor_i (C\tensor_j D) \tensor_i
(E\tensor_j F)$. Without loss of generality, we may assume that $X = A
\tensor_j B$, because we could always take $X = X \tensor_j I$. The
resulting span then gets completed into the internal associativity
axiom. One may then apply Theorem \ref{thm:maps} to show that there is
no other diamond with the same initial span.

The second way in which interchange can interact with associativity is
in the term $(A\tensor_j B)\tensor_i (C\tensor_j D\tensor_j E)$. In
this case, we get the following square, where the labels have the
evident components.

\[
\def\objectstyle{\scriptstyle}
\def\labelstyle{\scriptstyle}
\xymatrix{
  {(A\tensor_j B)\tensor_i (C\tensor_j D \tensor_j E)} \ar[rr]^{\eta}
  \ar[d]_{\eta} && {(A\tensor_i (C\tensor_j D))\tensor_j (B\tensor_i
    E)}\ar[d]^{\delta\tensor_j 1}\\
    {(A\tensor_i C)\tensor_j (B\tensor_i (D\tensor_j E))}
    \ar[rr]_{1\tensor_j \tilde{\delta}} &&
    {(A\tensor_i C)\tensor_j D \tensor_j (B\tensor_i E)}
}
\]

The above square commutes by substituting $(A\tensor_j I \tensor_j
B)\tensor_i (C\tensor_j D\tensor_j E)$ for the source and using
the external associativity axiom. Theorem \ref{thm:maps} easily yields
that there can be no other diamonds with the same initial span.

Similarly, a critical pair arises at $(A\tensor_j B\tensor_j
C)\tensor_i(D\tensor_j E)$. The analysis is analogous to the previous
case by inserting a unit to obtain $(A\tensor_j B \tensor_j
C)\tensor_i (D\tensor_j I\tensor_j E)$.

\subsection{Interchange + interchange}
Let $i < j < k$. An overlap between interchange rules occurs at $(A\tensor_j
B)\tensor_i ((C\tensor_k D)\tensor_j (E\tensor_j F))$. Since we have
strict units, we may assume that $A = A_1 \tensor_k A_t$ and $B = B_1
\tensor_k B_2$. We then obtain the initial span of the giant hexagon
axiom. The hexagon forms a diamond and it follows from Theorem
\ref{thm:maps} that there are no other diamonds with this initial
span.

\subsection{Interchange + units}

The critical pairs arising from the interaction of interchange with
units yield the various Eckmann-Hilton maps. As we have seen, these
are not always joinable. When they are, they commute by the following
lemma. 

\begin{lemma}
  Let $\C$ be a discrete category. The following diagrams commute in
  $\Fr_\C(\M_n)$, where $1 \le i < j < k \le n$:
  \begin{center}
    \begin{tabular}{cc}
      $
      \def\objectstyle{\scriptstyle}
      \def\labelstyle{\scriptstyle}
      \xymatrix{
        &{A\tensor_j B} \ar[dr] & {} \\
        {A\tensor_i B} \ar[ur] \ar[rr] \ar@{}[urr]|{(1)} && {A\tensor_k B}
      }$

      &

      $
      \def\objectstyle{\scriptstyle}
      \def\labelstyle{\scriptstyle}
      \xymatrix{
        &{B\tensor_j A} \ar[dr] & {} \\
        {A\tensor_i B} \ar[ur] \ar[rr] \ar@{}[urr]|{(2)} && {A\tensor_k B}
      }$\\

      $
      \def\objectstyle{\scriptstyle}
      \def\labelstyle{\scriptstyle}
      \xymatrix{
        &{A\tensor_j B} \ar[dr] & {} \\
        {A\tensor_i B} \ar[ur] \ar[rr] \ar@{}[urr]|{(3)} && {B\tensor_k A}
      }$

      &

      $
      \def\objectstyle{\scriptstyle}
      \def\labelstyle{\scriptstyle}
      \xymatrix{
        &{B\tensor_j A} \ar[dr] & {} \\
        {A\tensor_i B} \ar[ur] \ar[rr] \ar@{}[urr]|{(4)} && {B\tensor_k A}
      }$
    \end{tabular}
  \end{center}
\end{lemma}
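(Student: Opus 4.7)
The plan is to derive each of the four triangles as a specialization of the giant hexagon axiom, with six of the eight variables set to the unit~$I$.  By unfolding $\iota^{pq}_{X,Y}$ and $\tau^{pq}_{X,Y}$, each of them is exhibited as an instance of $\eta^{pq}$ applied to a term of the form $(U\tensor_q V)\tensor_p(W\tensor_q Z)$ in which exactly two of $U,V,W,Z$ are non-unit; the choice of which pair is non-unit, and in what positions, distinguishes the straight dimension-raising $\iota^{pq}$ from the twisted version $\tau^{pq}$.

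First, I would substitute variables in the giant hexagon in a way that collapses its ambient 8-variable shape to a 2-variable triangle.  Concretely, for diagram~(1), set $A_{\mathrm{gh}} = A$, $H_{\mathrm{gh}} = B$, and every other variable in the hexagon equal to $I$.  The source then collapses to $A\tensor_i B$ and the target to $A\tensor_k B$.  Of the six sides of the hexagon, four are instances of $\eta^{pq}$ whose argument tuples have the shape $(X,B,I,I)$, $(I,I,X,B)$, $(X,I,I,B)$ restricted to the unit-killing patterns of the internal and external unit conditions; each such side becomes an identity.  The two remaining sides are precisely $\iota^{ik}_{A,B}$ on the one path and $\iota^{jk}_{A,B}\of\iota^{ij}_{A,B}$ on the other, so the giant hexagon immediately gives diagram~(1).

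For the remaining three diagrams I would use the same strategy but permute the positions of the two non-unit variables $A,B$ among the eight slots of the giant hexagon.  Diagram~(4) is obtained by placing $A$ and $B$ in positions that force the inner interchange to swap their relative order, yielding $\tau^{ij}_{A,B}$ on the first leg and $\iota^{jk}_{B,A}$ on the second, while the opposite path collapses to $\tau^{ik}_{A,B}$.  Diagrams~(2) and~(3) require a single ``twist'' on one leg only, which is achieved by putting exactly one of $A,B$ in a swapped position and leaving the other straight; the unit conditions again kill four sides of the hexagon, leaving the desired triangle.  In each case one must use the strict unit equations $X\tensor_p I = X = I\tensor_p X$ to identify the collapsed hexagon with the triangle as stated.

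The main obstacle is purely combinatorial: checking, for each of the four diagrams, that a specific assignment of $A$ and $B$ to two of the eight variables of the giant hexagon (i) makes the source and target collapse to $A\tensor_i B$ and $A\tensor_k B$ (respectively $B\tensor_k A$), (ii) makes four of the six sides of the hexagon identities by the internal and external unit conditions, and (iii) has the two non-trivial sides equal to the two compositions appearing in the triangle.  Once the four substitutions are tabulated, each of the four commutativities reduces to a single application of the giant hexagon modulo the strict unit congruence.
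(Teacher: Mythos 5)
Your proposal is correct and follows essentially the same route as the paper: the paper proves all four triangles by substituting into the giant hexagon with $A$ and $B$ placed in two of the eight slots and every other variable set to $I$, so that the internal/external unit conditions and the strict unit equations collapse the hexagon to the stated triangle (your assignment for (1), $A_{\mathrm{gh}}=A$, $H_{\mathrm{gh}}=B$, is exactly the paper's first substitution). The only thing the paper adds is the explicit tabulation of the four substitutions that you defer to a combinatorial check, plus a pointer to Lemma 4.22 of the iterated monoidal categories paper.
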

\begin{proof}
  This follows from \cite[Lemma 4.22]{iterated}. More explicitly it
  follows from the giant hexagon axiom by making the following
  substitutions: 

  \bigbreak
  
  \begin{enumerate}
    \item $A\tensor_iB = ((A\tensor_k I)\tensor_j(I\tensor_k I))
      \tensor_i ((I\tensor_kI)\tensor_j(I\tensor_k B))$
      \medbreak
    \item $A\tensor_iB = ((I\tensor_k I)\tensor_j(A\tensor_k I))
      \tensor_i ((I\tensor_k B)\tensor_j(I\tensor_k I))$
      \medbreak
    \item $A\tensor_iB = ((I\tensor_k A)\tensor_j (I\tensor_k I))
      \tensor_i ((I\tensor_k I) \tensor_j (B\tensor_k I))$
      \medbreak
    \item $A\tensor_iB = ((I\tensor_k I)\tensor_j (I\tensor_k A))
      \tensor_i ((B\tensor_k I) \tensor_j (I\tensor_k I))$
  \end{enumerate}
\end{proof}

\subsection{Putting it all together}

We have seen that $\Fr_\C(\M_n)$ is terminating and that every diamond
in $\Fr_\C(\M_n)$ commutes. We can therefore apply Theorem
\ref{thm:maclane} to obtain the coherence theorem for iterated
monoidal categories.

\begin{theorem}
  Let $\C$ be a discrete category. If $A$ and $B$ are objects of
  $\Fr_\C(\M_n)$ having no repeated variables, then there is at most one
  map $A \to B$ in $\Fr_\C(\M_n)$. \qed
\end{theorem}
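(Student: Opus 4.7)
The plan is to assemble the pieces already in place and apply the Mac Lane coherence theorem (Theorem \ref{thm:maclane}) to the $2$-structure $\M_n$. The theorem requires three conditions: that $\M_n$ be finitely presented, that it be quasicycle-free, and that every diamond in $\Red_{\M_n,\C}$ commute in $\Fr_\C(\M_n)$.

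The first condition is immediate from the definition of $\M_n$, which lists finitely many function symbols, equations, reduction rules, and coherence axioms. For the second, Proposition \ref{prop:imcterm} establishes that $\M_n$ is terminating, and by the lemma asserting that terminating $2$-structures on a discrete category are quasicycle-free, we obtain quasicycle-freeness. For the third condition, we appeal to the case analysis carried out above: Lemma \ref{lem:itdisjoint} covers diamonds whose initial span rewrites disjoint subterms, Lemma \ref{itnested} covers diamonds whose initial span rewrites nested subterms, and the critical pairs analysis (interchange with associativity, interchange with interchange via the giant hexagon, and interchange with units via Eckmann-Hilton) covers diamonds whose initial span is an overlapping pair. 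In each case the relevant diamond was shown to commute, and applications of Theorem \ref{thm:maps} ruled out further diamonds sharing those initial spans.

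With all three hypotheses of Theorem \ref{thm:maclane} verified, we conclude that $\M_n$ is Mac Lane coherent: any parallel pair of morphisms in general position in $\Fr_\C(\M_n)$ are equal. To finish, I would invoke the lemma characterising general position for $\M_n$, which states that a reduction $s \to t$ is in general position precisely when neither $s$ nor $t$ contains repeated variables. Thus, given objects $A$ and $B$ with no repeated variables, any two maps $A \to B$ are reductions in general position with the same source and target, hence equal by Mac Lane coherence, yielding the claim.

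The main obstacle in this argument is not the final assembly, which is a clean application of the Mac Lane coherence theorem, but the case analysis of diamonds that precedes it. In particular, the subtle point (illustrated by Examples \ref{ex:nested} and \ref{ex:disjoint}) is that nested and disjoint rewrites do not automatically reduce to checking critical pairs; Lemmas \ref{lem:itdisjoint} and \ref{itnested} handle this using Theorem \ref{thm:maps} to rule out pathological alternate completions. Once that work is absorbed, the proof of the theorem itself is essentially a one-line invocation of Theorem \ref{thm:maclane}.
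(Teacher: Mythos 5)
Your proposal is correct and follows exactly the paper's own route: termination (Proposition \ref{prop:imcterm}) plus the terminating-implies-quasicycle-free lemma, commutativity of all diamonds via the disjoint/nested lemmas and the critical pairs analysis, the characterisation of general position as absence of repeated variables, and a final invocation of Theorem \ref{thm:maclane}. There is nothing to add beyond what the paper itself does.
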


\section{Conclusions}

Both of our general coherence theorems, Theorem \ref{thm:lambek} and
Theorem \ref{thm:maclane}, rely on 
the underlying structure being quasicycle-free. One may well call this
condition into question and wonder whether we can get away with a
weaker condition. For Lambek Coherence, quasicycle-freeness does not
capture all covariant structures known to be Lambek coherent. For
example, braided monoidal categories are certainly not quasicycle free
and yet they are well known to be Lambek coherent. However, the method
for proving this adds a rewrite system to the reductions, thus
expanding the amount of information available. The question still
stands, then, of whether there is a property \emph{of the underlying
  term rewriting system} that leads to Lambek coherence for
non-quasicycle-free structures.

The reliance on quasicycle-freeness for Mac Lane coherence seems more
fundamental. However, two of the crucial ingredients of our theory,
Theorem \ref{thm:power} and Proposition \ref{prop:diamond} rely solely
on acyclicity. This leads us to ask whether we can find conditions on
an acyclic $2$-structure that ensure Mac Lane coherence. 

Nevertheless, our focus on quasicycle-free structures has proven to be
broad and powerful enough for us to find the conceptual reason for the
coherence theorem for iterated monoidal categories. Moreover, it has
allowed us to show that there is a wide variety of coherence
phenomena, even in the purely covariant case. 

\section{Acknowledgements}

Thanks to the members of The Australian Category Seminar, in
particular Mike Johnson and Michael Batanin, for useful discussion and
feedback on aspects of this work.


\begin{thebibliography}{10}

\bibitem{Baader:unification}
Franz Baader and J{\"o}rg~H. Siekmann.
\newblock Unification theory.
\newblock In {\em Handbook of logic in artificial intelligence and logic
  programming, Vol.\ 2}, Oxford Sci. Publ., pages 41--125. Oxford Univ. Press,
  New York, 1994.

\bibitem{iterated}
C.~Balteanu, Z.~Fiedorowicz, R.~Schw{\"a}nzl, and R.~Vogt.
\newblock Iterated monoidal categories.
\newblock {\em Adv. Math.}, 176(2):277--349, 2003.

\bibitem{CockettSeely:wdc}
J.~R.~B. Cockett and R.~A.~G. Seely.
\newblock Weakly distributive categories.
\newblock {\em J. Pure Appl. Algebra}, 114(2):133--173, 1997.

\bibitem{Kelly_coherence}
G.~M. Kelly.
\newblock An abstract approach to coherence.
\newblock In {\em Coherence in categories}, pages 106--147. Lecture Notes in
  Math., Vol. 281. Springer, Berlin, 1972.

\bibitem{Kelly_clubs}
G.~M. Kelly.
\newblock On clubs and doctrines.
\newblock In {\em Category Seminar (Proc. Sem., Sydney, 1972/1973)}, pages
  181--256. Lecture Notes in Math., Vol. 420. Springer, Berlin, 1974.

\bibitem{terese:firstorder}
Jan~Willem Klop and Roel de~Vrijer.
\newblock First-order term rewriting systems.
\newblock In {\em Term rewriting systems}, volume~55 of {\em Cambridge Tracts
  Theoret. Comput. Sci.}, pages 24--59. Cambridge Univ. Press, Cambridge, 2003.

\bibitem{terese:decidability}
Jan~Willem Klop and Roel de~Vrijer.
\newblock Properties of rewriting: decidability and modularity.
\newblock In {\em Term rewriting systems}, volume~55 of {\em Cambridge Tracts
  Theoret. Comput. Sci.}, pages 149--180. Cambridge Univ. Press, Cambridge,
  2003.

\bibitem{Lambek:coh1}
Joachim Lambek.
\newblock Deductive systems and categories. {I}. {S}yntactic calculus and
  residuated categories.
\newblock {\em Math. Systems Theory}, 2:287--318, 1968.

\bibitem{laplaza:associative}
Miguel~L. Laplaza.
\newblock Coherence for associativity not an isomorphism.
\newblock {\em J. Pure Appl. Algebra}, 2(2):107--120, 1972.

\bibitem{laplaza:distributive}
Miguel~L. Laplaza.
\newblock Coherence for distributivity.
\newblock In {\em Coherence in categories}, pages 29--65. Lecture Notes in
  Math., Vol. 281. Springer, Berlin, 1972.

\bibitem{MacLane_natural}
Saunders Mac~Lane.
\newblock Natural associativity and commutativity.
\newblock {\em Rice Univ. Studies}, 49(4):28--46, 1963.

\bibitem{Meseguer_rl}
Jos{\'e} Meseguer.
\newblock Conditional rewriting logic as a unified model of concurrency.
\newblock {\em Theoret. Comput. Sci.}, 96(1):73--155, 1992.
\newblock Second Workshop on Concurrency and Compositionality (San Miniato,
  1990).

\bibitem{Power:pasting2}
A.~J. Power.
\newblock A {$2$}-categorical pasting theorem.
\newblock {\em J. Algebra}, 129(2):439--445, 1990.

\end{thebibliography}
\end{document}